  \let\noindent\empty 
\numberwithin{equation}{section}
\theoremstyle{plain}
\newtheorem{lemma}[equation]{Lemma} 
\newtheorem{proposition}[equation]{Proposition}
\newtheorem{theorem}[equation]{Theorem}
\newtheorem{corollary}[equation]{Corollary}
\theoremstyle{remark}
\newtheorem{remark}[equation]{Remark}
\theoremstyle{definition}
\newtheorem{definition}[equation]{Definition}
\newenvironment{enumalph}
{\begin{enumerate}}
{\end{enumerate}}
\newenvironment{enumroman}
{\begin{enumerate}}
{\end{enumerate}}
\title{A new condition on the Jones polynomial of a fibered positive link}
\author{Lizzie Buchanan}
\address{Department of Mathematics, Dartmouth College, Hanover, NH 03755}
\email{elizabeth.m.buchanan.gr@dartmouth.edu}
\urladdr{\url{http://math.dartmouth.edu/~ebuchanan/}}
\begin{document}

\maketitle

\begin{abstract}
We give a new upper bound on the maximum degree of the Jones polynomial of a  fibered positive link. In particular, we prove that the maximum degree of the Jones polynomial of a fibered positive knot is at most four times the minimum degree.
Using this result, we can complete the classification of all knots of crossing number $\leq 12$ as positive or not positive, by showing that the seven remaining knots for which positivity was unknown are not positive. That classification was also done independently at around the same time by Stoimenow.
\end{abstract}

\section{Introduction}

\subsection{Motivation}

We would ultimately like to produce an infinite family of knots that have an almost-alternating minimal diagram. Almost-alternating diagrams and almost-alternating knots were first introduced by  Adams et al. \cite{Ad+}. An example of an almost-alternating knot that has a minimal (with respect to crossing number) diagram that is itself almost-alternating appeared in a 2009 paper of Diao, Ernst, and Stasiak \cite{Di+}. Can we construct an infinite family of knots with this property?

Almost-positive-alternating knot diagrams (diagrams for which a single crossing change would produce a positive alternating diagram) seem like a good candidate of sub-families to work with. In many examples, we come close to proving that an almost-positive-alternating knot diagram with crossing number $c$ is minimal, except we have two cases we need to rule out: Case 1, that there could be an almost-positive diagram with $c-1$ crossings, and Case 2, that there could be a positive diagram with $c-1$ crossings. 

Many of our constructed examples of almost-positive-alternating knot diagrams have the additional property that $V_1=0$ (the second coefficient of the Jones polynomial is $0$). The work below
can help us rule out Case 2, by providing a bound on the maximum degree of the Jones polynomial of a positive knot in the case that $V_1=0$, which we know from Stoimenow \cite{Stoimenow} is exactly the case where the positive knot is a fibered knot.

\subsection{Main Results}

The main result of this paper appears as:\\

\textbf{Corollary \ref{max deg V leq 4 min deg V }:} \textit{If $K$ is a fibered positive knot (with Jones polynomial $V$), then $$\max \deg V \leq 4\min\deg V.$$}

On the KnotInfo website \cite{KnotInfo}, there are only seven knots listed for which positivity is unknown, and determining the positivity (up to mirroring) of these knots is presented as an open question in \cite{AT17}. With our result above, we can resolve this question and show that the knots $12_{n148}, 12_{n276}, 12_{n329}, 12_{n366}, 12_{n402}, 12_{n528},$ and $12_{n660}$ are not positive. This completes the positivity classification of all knots with crossing number $\leq 12$. 

\begin{remark}
Updates for the reader: Since posting the first version of this paper, our motivating question has been answered in the affirmative by Stoimenow, who in \cite{St22} gives a construction of an infinite family of knots that have a minimal almost-alternating diagram. 

We have also learned that Stoimenow independently completed the positivity classification of knots up to $12$ crossings at around the same time as our work for this paper. In fact, Stoimenow has compiled lists of all non-alternating prime knots of crossing number $\leq 15$. These lists can be found at \cite{StW}. 
\end{remark}

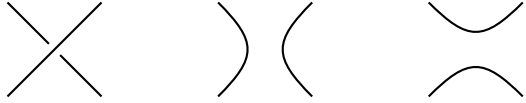
\begin{figure}[h]
    \centering
    \begin{tikzpicture}[every path/.style={thick}, every
node/.style={transform shape, knot crossing, inner sep=2.75pt}, scale=0.7]
    \node (tl) at (-1, 1) {};
    \node (tr) at (1, 1) {};
    \node (bl) at (-1, -1) {};
    \node (br) at (1, -1) {};
    \node (c) at (0,0) {};

    \draw (bl) -- (tr);
    \draw (br) -- (c);
    \draw (c) -- (tl);
    
    \begin{scope}[xshift=4cm]
    \node (tl) at (-1, 1) {};
    \node (tr) at (1, 1) {};
    \node (bl) at (-1, -1) {};
    \node (br) at (1, -1) {};

    \draw (bl) .. controls (bl.8 north east) and (tl.8 south east) .. (tl);
    \draw (br) .. controls (br.8 north west) and (tr.8 south west) .. (tr);
    \end{scope}

    \begin{scope}[xshift=8cm]
    \node (tl) at (-1, 1) {};
    \node (tr) at (1, 1) {};
    \node (bl) at (-1, -1) {};
    \node (br) at (1, -1) {};

    \draw (bl) .. controls (bl.8 north east) and (br.8 north west) .. (br);
    \draw (tl) .. controls (tl.8 south east) and (tr.8 south west) .. (tr);
    \end{scope}

    \end{tikzpicture}
    \caption{ A crossing (left), its $A$-smoothing (middle), and $B$-smoothing (right)}
    \label{A-smooth and B-smooth}
\end{figure}

\subsection*{Acknowledgements}
The author would like to thank her graduate research advisor, Vladimir Chernov. Additional thanks to Ina Petkova and C.-M. Michael Wong for comments on a draft of this paper. 

\section{Bound on max degree of the Jones polynomial of a positive knot}

Throughout the following, for a link diagram $D$ we let: 
\begin{itemize}
    \item $c(D)$ be the crossing number 
    \item $w(D)$ be the writhe
    \item $n(D)$ be the number of link components
    \item $s(D)$ be the number of Seifert circles
    \item $g(D)$ be the Seifert genus, which for a non-split link is found by $g(D)=\frac{c(D)-s(D)+2 - n(D)}{2}$,
    \item $|A(D)|$ be the number of $A$-circles (sometimes we may just use $|A|$ if there is only one diagram in question)
    \item $|B(D)|$ (or just $|B|$) be the number of $B$-circles
    \item $V$ be the Jones polynomial
    \item $V_i$ be the $i$th coefficient of the Jones polynomial. \\
    This means that if $d$ is the lowest degree of a non-zero term in $V$, we write $V_0$ to refer to the coefficient of the term $t^d$, and then $V_i$ is the (not necessarily non-zero) coefficient of the term $t^{d+i}$. 
\end{itemize}

Whenever we refer to \say{an arc} of a diagram, we mean a portion of a strand that goes between two crossings, so an arc ends when it reaches any crossing, not just an undercrossing.

\begin{definition}
Let $D$ be a link diagram. Smooth every crossing in the $A$-direction (see Figure \ref{A-smooth and B-smooth}). Create the dual graph, where each $A$-circle corresponds to a vertex in the dual graph, and each crossing shared between $A$-circles corresponds to an edge between vertices in the graph. This graph is called the  \textbf{$A$-state graph}. (Figure \ref{reduced A-state graph example})
\end{definition}

\begin{definition}
Let $D$ be a link diagram. The \textbf{reduced $A$-state graph} is the result of removing duplicate edges from the $A$-state graph, so that vertices in the reduced $A$-state graph share a single edge if and only if their corresponding $A$-circles share at least one crossing in the knot diagram. 
\end{definition}

\begin{figure}[h]
    \centering
    \begin{tikzpicture}[every path/.style={thick}, every
node/.style={transform shape, knot crossing, inner sep=2pt}]
    \begin{scope}[xshift=0cm, scale=0.8]
    \node (tlo) at (-6.25,2.25) {};
    \node (tro) at (-3.75,2.25) {};
    \node (so) at (-5,1.25) {};
    \node (mlo) at (-5.5,0) {};
    \node (mro) at (-4.5, 0) {};
    \node (bo) at (-5,-1.25) {};
    \draw (bo.center) .. controls (bo.4 north west) and (mlo.4 south west) ..
    (mlo.center);
    \draw  (bo) .. controls (bo.4 north east) and (mro.4 south east).. (mro);
    \draw  [-{Stealth}] (mlo) .. controls (mlo.8 north west) and (so.3 south west) ..
    (so);
    \draw [-{Stealth}] (mlo.center) .. controls (mlo.8 north east) and (mro.2 north
    west) .. (mro);
    \draw  (mro.center) .. controls (mro.4 north east) and (so.8 south east) ..
    (so.center);
    \draw (mro.center) .. controls (mro.8 south west) and (mlo.3 south east) ..
    (mlo);
    \draw (so) .. controls (so.4 north east) and (tro.8 south west) .. (tro);
    \draw  (so.center) .. controls (so.8 north west) and (tlo.8 south
    east) .. (tlo.center);
    \draw (tlo.center) .. controls (tlo.16 north west) and (tro.16 north
    east) .. (tro);
    \draw (bo.center) .. controls (bo.16 south east) and (tro.16 south east) ..
    (tro.center);
    \draw (bo) .. controls (bo.16 south west) and (tlo.16 south
    west) .. (tlo);
    \draw (tlo) .. controls (tlo.4 north east) and (tro.4 north
    west) .. (tro.center);
    \end{scope}
    
    \begin{scope}[xshift=-0.75cm, scale=0.8]
    \node (tl) at (-1.25,2.25) {};
    \node (tltl) at (-1.3,2.3) {};
    \node (tlbl) at (-1.3,2.2) {};
    \node (tltr) at (-1.2,2.3) {};
    \node (tlbr) at (-1.2,2.2) {};
    \node (tr) at (1.25,2.25) {};
    \node (trtl) at (1.2,2.3) {};
    \node (trbl) at (1.2,2.2) {};
    \node (trtr) at (1.3,2.3) {};
    \node (trbr) at (1.3,2.2) {};
    \node (stl) at (-0.05,1.3) {};
    \node (str) at (0.05,1.3) {};
    \node (sbl) at (-0.05,1.2) {};
    \node (sbr) at (0.05,1.2) {};
    \node (ml) at (-0.5,0) {};
    \node (mltl) at (-0.55,0.05) {};
    \node (mlbl) at (-0.55,-0.05) {};
    \node (mltr) at (-0.45,0.05) {};
    \node (mlbr) at (-0.45,-0.05) {};
    \node (mr) at (0.5, 0) {};
    \node (mrtl) at (0.45, 0.05) {};
    \node (mrbl) at (0.45, -0.05) {};
    \node (mrtr) at (0.55, 0.05) {};
    \node (mrbr) at (0.55, -0.05) {};
    \node (b) at (0,-1.25) {};
    \node (btl) at (-0.05, -1.2) {};
    \node (bbl) at (-0.05, -1.3) {};
    \node (btr) at (0.05, -1.2) {};
    \node (bbr) at (0.05, -1.3) {};
    
    \draw [gray] (btl.center) .. controls (btl.4 north west) and (mlbl.4 south west) ..
    (mlbl.center);
    \draw [gray] (btr.center) .. controls (btr.4 north east) and (mrbr.4 south east)
    .. (mrbr.center);
    \draw [gray] (mltl.center) .. controls (mltl.8 north west) and (sbl.3 south west) ..
    (sbl.center);
    \draw [green] (mltr.center) .. controls (mltr.4 north east) and (mrtl.4 north
    west) .. (mrtl.center);
    \draw [gray] (mrtr.center) .. controls (mrtr.4 north east) and (sbr.8 south east) ..
    (sbr.center);
    \draw [green] (mrbl.center) .. controls (mrbl.4 south west) and (mlbr.4 south east) ..
    (mlbr.center);
    \draw [blue] (str.center) .. controls (str.8 north east) and (trbl.8 south west) .. (trbl.center);
    \draw [blue] (stl.center) .. controls (stl.8 north west) and (tlbr.8 south
    east) .. (tlbr.center);
    \draw [magenta] (tltl.center) .. controls (tltl.16 north west) and (trtr.16 north
    east) .. (trtr.center);
    \draw [blue] (bbr.center) .. controls (bbr.16 south east) and (trbr.16 south east) ..
    (trbr.center);
    \draw [blue] (bbl.center) .. controls (bbl.16 south west) and (tlbl.16 south
    west) .. (tlbl.center);
    \draw [magenta] (tltr.center) .. controls (tltr.4 north east) and (trtl.4 north
    west) .. (trtl.center);
    
    \draw [blue] (stl.center) -- (str.center) {};
    \draw [gray] (sbl.center) -- (sbr.center) {};
    \draw [gray] (mltl.center) -- (mlbl.center) {};
    \draw [green] (mltr.center) -- (mlbr.center) {};
    \draw [green] (mrtl.center) -- (mrbl.center) {};
    \draw [gray] (mrtr.center) -- (mrbr.center) {};
    \draw [magenta] (tltl.center) -- (tltr.center) {};
    \draw [blue] (tlbl.center) -- (tlbr.center) {};
    \draw [magenta] (trtl.center) -- (trtr.center) {};
    \draw [blue] (trbl.center) -- (trbr.center) {};
    \draw [gray] (btl.center) -- (btr.center) {};
    \draw [blue] (bbl.center) -- (bbr.center) {};
    \end{scope}

    
    \begin{scope}[xshift=2cm, scale =0.8]
    \node (magenta) at (0, 2.75) {};
    \node (m) at (0, 3) {};
    \node (blue) at (0, 1) {};
    \node (c) at (0, .75) {};
    \node (cr) at (0.25, 0.75) {};
    \node (orange) at (1.5, 0.75) {};
    \node (o) at (1.75, 0.75) {};
    \node (ou) at (1.75, 0.5) {};
    \node (ol) at (1.5, 0.75) {};
    \node (green) at (1.75, -1) {};
    \node (g) at (1.75, -1.25) {};

    \draw [magenta] (m) circle[radius=0.32cm];
    \draw [blue] (c) circle[radius=0.32cm];
    \draw [gray] (o) circle[radius=0.32cm];
    \draw [green] (g) circle[radius=0.32cm];
    \draw (blue) .. controls (blue.3 north east) and (magenta.3 south east) .. (magenta);
    \draw (blue) .. controls (blue.3 north west) and (magenta.3 south west) .. (magenta);
    \draw (orange) .. controls (orange.3 north west) and (cr.3 north east) .. (cr);
    \draw (orange) .. controls (orange.3 south west) and (cr.3 south east) .. (cr);

    \draw (green) .. controls (green.3 north west) and (ou.3 south west) .. (ou);
    \draw (green) .. controls (green.3 north east) and (ou.3 south east) .. (ou);
    \end{scope}
    
    \begin{scope}[xshift=4.65cm, scale=0.8]
    \node (magenta) at (0, 2.75) {};
    \node (m) at (0, 3) {};
    \node (cyan) at (0, 1) {};
    \node (c) at (0, .75) {};
    \node (cr) at (0.25, 0.75) {};
    \node (orange) at (1.5, 0.75) {};
    \node (o) at (1.75, 0.75) {};
    \node (ou) at (1.75, 0.5) {};
    \node (green) at (1.75, -1) {};
    \node (g) at (1.75, -1.25) {};
    
    \draw [magenta] (m) circle[radius=0.32cm];
    \draw [blue] (c) circle[radius=0.32cm];
    \draw [gray] (o) circle[radius=0.32cm];
    \draw [green] (g) circle[radius=0.32cm];
    \draw (cyan) -- (magenta);
    \draw (orange) -- (cr);
    \draw (green) -- (ou);
    \end{scope}
\end{tikzpicture} 
\caption{ (Left to right:) A positive link diagram, its $A$-circles, its $A$-state graph, and its reduced $A$-state graph }
\label{reduced A-state graph example}
\end{figure}
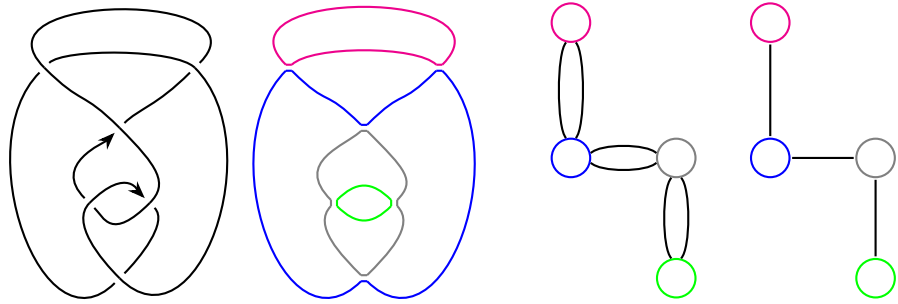

\begin{definition}
A(n oriented) link diagram is called \textbf{positive} if every crossing in that diagram is positive. (See Figure \ref{positive and negative crossing}.) A link is a \textbf{positive link} if it has a positive diagram.  
\end{definition}

\begin{figure}[h]
\centering
    \begin{tikzpicture}[knot gap=9, scale=0.5]
        
        \begin{scope}[xshift=5cm]
        \draw[thick, -{Stealth}] (0.5,-0.5) -- (-1,1);
        \draw[thick, -{Stealth}] (-1,-1) -- (1,1);
        \draw[thick, knot] (1,-1) -- (-0.5,0.5);
        \draw[thick, knot] (-1,-1) -- (0.5,0.5);
        \end{scope}
        
        \begin{scope}[xshift=9cm]
        \draw[thick, -{Stealth}] (0.5,-0.5) -- (-1,1);
        \draw[thick, -{Stealth}] (-1,-1) -- (1,1);
        \draw[thick, knot] (-1,-1) -- (0.5,0.5);
        \draw[thick, knot] (1,-1) -- (-0.5,0.5);
        \end{scope}
        
    \end{tikzpicture} 
    \caption{ A positive crossing (left) and a negative crossing (right)} \label{positive and negative crossing}
\end{figure}
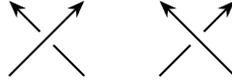

In everything that follows, we always assume we are working with non-split links.

\begin{remark}
In a positive diagram, $A$-smoothings are equivalent to smoothing according to Seifert's algorithm. So in a positive diagram, $|A(D)|=s(D)$. 
\end{remark}

\begin{theorem}\label{V_1 tree}(Stoimenow)\cite{Stoimenow} 

Let $L$ be a positive link with positive diagram $D$. Then coefficient $V_1$ of the Jones polynomial satisfies:
$$(-1)^{n(L)-1}V_1=s(D)-1-\#(\text{pairs of Seifert circles that share at least one crossing}).$$ 

This means that $V_1=0$ exactly when the reduced $A$-state graph is a tree.
\end{theorem}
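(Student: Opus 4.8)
The plan is to read off $V_0$ and $V_1$ directly from the Kauffman bracket state sum. Normalize $V_L(t) = (-A)^{-3w(D)}\langle D\rangle$ with $A = t^{-1/4}$; since $D$ is positive, $w(D) = c(D)$, and by the Remark above the all-$A$ state of $D$ is Seifert's state, so it has $|A(D)| = s(D)$ loops. Because large powers of $A$ correspond to small powers of $t$, the coefficients $V_0$ and $V_1$ are controlled by the top two nonzero $A$-degree coefficients of $\langle D\rangle$, and these occur four apart in $A$ (the Jones polynomial involves only powers of $t$ in a single residue class). The state contributing the highest $A$-degree $c + 2(s-1)$ is the all-$A$ state, with coefficient $(-1)^{s-1}$. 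For this to be the actual leading coefficient we need that no other state reaches that degree, i.e. that $D$ is $+$-adequate; so the first step is to assume (harmlessly) that $D$ is reduced (deleting a nugatory crossing keeps the diagram positive) and then verify that a reduced positive diagram is $+$-adequate --- equivalently, that in Seifert's state no Seifert circle is joined to itself by a crossing.

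With $+$-adequacy in hand I would run the standard ``second coefficient'' computation (Dasbach--Lin, or Thistlethwaite's argument for adequate diagrams). For the state $\sigma_S$ that $B$-smooths exactly a set $S$ of crossings, the highest $A$-degree of its contribution is $c - 2|S| + 2(|\sigma_S| - 1)$, so $\sigma_S$ reaches the degree $c + 2s - 6$ (four below the top) only if $|\sigma_S| \ge s + |S| - 2$. Each $B$-smoothing changes the loop count by $\pm 1$, and $+$-adequacy forces the first one to be a merge, so this requires equality, which in turn forces every later $B$-smoothing to be a split; tracing the merges and splits shows the only possibility is for all crossings in $S$ to join one single pair of Seifert circles, in which case $|\sigma_S| = s - 2 + |S|$ exactly. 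Summing the contributions at degree $c + 2s - 6$: the all-$A$ state gives $(-1)^{s-1}(s-1)$, and a pair of Seifert circles joined by $m \ge 1$ crossings gives $\sum_{k=1}^{m}\binom{m}{k}(-1)^{s-3+k} = (-1)^{s}$, independent of $m$. Hence the coefficient of $A^{c + 2s - 6}$ in $\langle D\rangle$ is $(-1)^{s-1}(s-1) + (-1)^{s} e' = (-1)^{s}(e' - s + 1)$, where $e' = \#(\text{pairs of Seifert circles sharing a crossing})$.

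It then remains to bookkeep the normalization: multiplying by $(-A)^{-3c}$ and substituting $A = t^{-1/4}$ converts the two coefficients above into $V_0 = (-1)^{c+s-1}$ and $V_1 = (-1)^{c+s}(e' - s + 1)$, and $c - s - n(L)$ is even (straight from the genus formula $g(D) = \tfrac{1}{2}(c - s + 2 - n)$), so $(-1)^{n(L)-1}V_1 = -(e' - s + 1) = s(D) - 1 - e'$, which is the stated identity. The last sentence is then immediate: for a non-split link the reduced $A$-state graph is connected with $s$ vertices and $e'$ edges, so $V_1 = 0 \iff e' - s + 1 = 0 \iff$ that graph is a tree. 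I expect the real work to be the middle paragraph --- both pinning down that positive diagrams are $+$-adequate (so the all-$A$ state genuinely dominates) and doing the loop-count/degree accounting carefully enough to be certain that only the ``single parallel class'' states feed the second coefficient; everything else is bookkeeping.
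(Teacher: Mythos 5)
The paper does not prove this statement: it is quoted as a theorem of Stoimenow and cited to \cite{Stoimenow}, so there is no in-paper proof to compare against. Your sketch is a correct and essentially standard route to it --- the Kauffman-bracket ``second coefficient'' computation for an adequate diagram (Dasbach--Lin / Stoimenow style), and the degree accounting, the $\sum_k \binom{m}{k}(-1)^k = -1$ collapse per reduced edge, and the sign bookkeeping via the parity of $c - s - n$ all check out. One simplification you could make: reducedness is not actually needed for $+$-adequacy. In the Seifert state of \emph{any} diagram, no crossing can join a Seifert circle to itself, because the two coherently oriented smoothing arcs at a crossing would then be joined by two disjoint subarcs of a single simple closed curve connecting interleaved points on the boundary of a disk around the crossing, which is impossible in the plane. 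So every positive diagram is $+$-adequate, and the displayed identity needs no reducedness hypothesis (consistent with the theorem as stated). The one step you flag as the real work --- that a state $\sigma_S$ attains loop count $s + |S| - 2$ only when all crossings of $S$ join a single pair of Seifert circles --- does go through by the argument you indicate: order-independence of the final state lets you put any two crossings of $S$ first and second, the first toggle is forced to be a merge and the second to be a split, and a split is only possible if the second crossing joins the same pair of circles as the first.
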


So if $V_1=0$ for a positive diagram, then there are exactly $s(D)-1$ pairs of Seifert circles in the diagram that share at least one crossing. (See Figure \ref{reduced A-state graph example}.) For more details on $A$-state graphs, the Jones polynomial and its coefficients, and properties of positive knots (and almost-positive knots), see \cite{Stoimenow} and \cite{St05}. 

\subsection{Positive link diagram $D$ with $V_1=0$ must have $4\min \deg V \geq c(D)$} We begin with theorems from knot theory folklore:  
\begin{theorem} \label{min jones degree = genus of pos}
If $L$ is a positive link of $n$ components with positive diagram $D$, then $$\min \deg V_L = \frac{c(D)-s(D)+1}{2}.$$
\end{theorem}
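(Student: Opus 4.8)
The plan is to argue entirely through the Kauffman bracket. Recall that for a diagram $D$ with $c(D)$ crossings the bracket is the state sum
\[
\langle D\rangle=\sum_{s}A^{\,a(s)-b(s)}\bigl(-A^{2}-A^{-2}\bigr)^{|s|-1},
\]
taken over the $2^{c(D)}$ states $s$, where $a(s)$ and $b(s)=c(D)-a(s)$ count the $A$- and $B$-smoothings of $s$ and $|s|$ is the number of loops of the resulting crossingless diagram, and that $V(t)=\bigl((-A)^{-3w(D)}\langle D\rangle\bigr)\big|_{A=t^{-1/4}}$. Since every crossing of $D$ is positive, $w(D)=c(D)$; and since $A$-smoothing agrees with Seifert's algorithm on a positive diagram, the all-$A$ state $s_{A}$ has $|s_{A}|=|A(D)|=s(D)$. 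As $A=t^{-1/4}$ sends $A^{k}$ to $t^{-k/4}$, it carries the top $A$-power of $(-A)^{-3c(D)}\langle D\rangle$ to the lowest $t$-power of $V$, so $\min\deg V=-\tfrac14\bigl(-3c(D)+\max\deg_{A}\langle D\rangle\bigr)$, and the whole problem becomes computing $\max\deg_{A}\langle D\rangle$, which we claim equals $c(D)+2s(D)-2$.

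First I would record the general inequality $\max\deg_{A}\langle D\rangle\le c(D)+2s(D)-2$. The state $s_{A}$ contributes a term of top $A$-degree $c(D)+2\bigl(s(D)-1\bigr)$. For a state $s$ with $k\ge1$ $B$-smoothings, moving from $s_{A}$ to $s$ one crossing at a time changes $|s|$ by $\pm1$ at each of the $k$ steps, so $|s|\le s(D)+k$, and hence its term has $A$-degree at most $\bigl(c(D)-2k\bigr)+2\bigl(|s|-1\bigr)\le c(D)+2s(D)-2$. Feeding this bound into the formula for $\min\deg V$ yields the \say{easy} half, $\min\deg V\ge\frac{c(D)-s(D)+1}{2}$.

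Equality follows once we know that the coefficient of $A^{c(D)+2s(D)-2}$ in $\langle D\rangle$ is nonzero. The state $s_{A}$ contributes $(-1)^{s(D)-1}$ to it, and I must rule out any contribution from the other states. By the degree count, a state with $k\ge1$ $B$-smoothings contributes at this degree only when $|s|=s(D)+k$; since the $k$ single-crossing changes each alter $|s|$ by $\pm1$ and their total effect is $+k$, every one of them must increase $|s|$, so in particular changing some single crossing of $s_{A}$ from its $A$- to its $B$-smoothing increases the loop count, which is exactly to say that crossing has both of its $A$-smoothing arcs lying on one circle of $s_{A}$. Thus the statement reduces to: a positive diagram is $A$-adequate, i.e.\ in the Seifert state of a positive diagram no crossing joins a Seifert circle to itself. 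For a nugatory crossing this is immediate, as Seifert's algorithm there splits off a trivial circle and the two smoothing arcs land on different circles; for a non-nugatory crossing I would derive it from the planar arrangement of the Seifert circles of a positive diagram (alternatively, the $A$-adequacy of positive, indeed homogeneous, diagrams can be cited). Granting it, $s_{A}$ is the unique state attaining the top $A$-degree, the coefficient of $A^{c(D)+2s(D)-2}$ is $(-1)^{s(D)-1}\ne0$, and therefore $\min\deg V=\frac{c(D)-s(D)+1}{2}$.

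The one step with real content is the $A$-adequacy of positive diagrams; everything else is routine degree bookkeeping, and a small example such as $\langle D\rangle=-A^{5}-A^{-3}+A^{-7}$ for the standard three-crossing positive trefoil (whose leading term $-A^{5}$ comes solely from $s_{A}$, while the lower terms involve heavy cancellation) makes the pattern transparent. Since the result is billed as folklore, one reasonable alternative is to quote $A$-adequacy from the literature and present only the short bracket computation.
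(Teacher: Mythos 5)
The paper offers no proof of this statement at all---it is introduced with ``We begin with theorems from knot theory folklore'' and used as a black box---so there is nothing to compare your argument against except the standard folklore proof, which is exactly what you have reconstructed. Your degree bookkeeping is correct: the all-$A$ state contributes $A$-degree $c+2(s-1)$, the telescoping bound $|s|\le s(D)+k$ gives $\max\deg_A\langle D\rangle\le c+2s-2$ and hence the inequality $\min\deg V\ge\frac{c-s+1}{2}$, and the equality case correctly reduces to showing no state with $k\ge 1$ attains $|s|=s(D)+k$, which via the first step of the path reduces to $A$-adequacy of the Seifert state. The one point you leave open is also the one worth tightening: the fact you need is that the Seifert graph of \emph{any} oriented diagram is loopless, i.e.\ the two arcs produced by the oriented smoothing of a crossing always lie on distinct Seifert circles; positivity enters only to identify the all-$A$ state with the Seifert state. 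This is not special to positive or homogeneous diagrams and has a two-line proof: if both arcs lay on one simple closed curve $C$, then $C$ would have to contain two disjoint sub-paths joining interleaved pairs among the four endpoints on the boundary of the crossing square (top-left to bottom-right and top-right to bottom-left), which is impossible for disjoint arcs in the plane by the Jordan curve theorem. With that inserted, your proof is complete and is the standard one; your trefoil sanity check ($-A^5$ leading term, $\min\deg V=1=\frac{3-2+1}{2}$) is consistent with it.
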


\begin{lemma}\label{c(D) leq 4g}
Let $D$ be a reduced positive link diagram (i.e. with no nugatory crossings) such that $V_1=0$. 
Then $4\min \deg V \geq c(D)$.
\end{lemma}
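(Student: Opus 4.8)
The plan is to turn the statement about the Jones polynomial into a combinatorial inequality about the diagram and then feed in the tree structure that $V_1=0$ provides. By Theorem \ref{min jones degree = genus of pos}, $\min\deg V=\tfrac12\bigl(c(D)-s(D)+1\bigr)$, so
\[
4\min\deg V = 2\bigl(c(D)-s(D)+1\bigr),
\]
and the desired inequality $4\min\deg V\ge c(D)$ is equivalent to $c(D)\ge 2\bigl(s(D)-1\bigr)$. If $s(D)=1$ this is trivial, so assume $s(D)\ge 2$.

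Next I would invoke Theorem \ref{V_1 tree}. Since $D$ is positive, $A$-smoothing coincides with Seifert's algorithm, so $|A(D)|=s(D)$ and the reduced $A$-state graph $T$ has $s(D)$ vertices; as $V_1=0$, Theorem \ref{V_1 tree} says $T$ is a tree, hence has exactly $s(D)-1$ edges, each corresponding to a pair of Seifert circles that share at least one crossing. Every crossing of $D$ whose two strands lie on distinct Seifert circles realizes exactly one such pair, and any remaining crossings (both strands on one Seifert circle) only add to $c(D)$. So, writing $m_e\ge 1$ for the number of crossings realizing the edge $e$ of $T$, we have $c(D)\ge\sum_{e\in E(T)}m_e$, and it suffices to prove $m_e\ge 2$ for every edge $e$ of $T$.

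The core of the argument is ruling out $m_e=1$. Let $G$ be the Seifert graph of $D$ (one vertex per Seifert circle, one edge per crossing), so $T$ is obtained from $G$ by deleting loops and merging parallel edges; $G$ is connected since $T$ is. Suppose a pair $e=\{C,C'\}$ shared a single crossing $x$. Since $T$ is a tree, $e$ is a bridge of $T$, and I claim $x$ is then a bridge of $G$: any walk in $G$ from $C$ to $C'$ projects (forgetting multiplicities and loops) to a walk in $T$ from $C$ to $C'$, which in a tree must traverse $e$; as $x$ is the only crossing realizing $e$, the walk must pass through $x$, so $G\setminus x$ is disconnected. Now I would use the standard fact that a bridge of the Seifert graph of a connected link diagram is a nugatory crossing: the two sides of $G\setminus x$ lie on opposite sides of a simple closed curve meeting the diagram only at $x$ (concretely, a curve running straight across the twisted band at $x$ in the Seifert surface produced by Seifert's algorithm). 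This contradicts the hypothesis that $D$ is reduced. Hence $m_e\ge 2$ for every $e$, and $c(D)\ge 2|E(T)|=2\bigl(s(D)-1\bigr)$, as required.

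The step I expect to be the real obstacle is the last one: making precise the implication ``bridge of the Seifert graph $\Rightarrow$ nugatory crossing.'' This is where planarity of the diagram is used essentially and where the ``reduced'' hypothesis gets consumed; everything else is bookkeeping with the genus formula and with the fact that a tree on $s(D)$ vertices has $s(D)-1$ edges.
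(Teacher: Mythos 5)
Your proposal is correct and follows essentially the same route as the paper: both reduce the claim via Theorem \ref{min jones degree = genus of pos} to the combinatorial inequality $c(D)\ge 2(s(D)-1)$, and both obtain it from the tree structure guaranteed by Theorem \ref{V_1 tree} together with the observation that two adjacent Seifert circles sharing only one crossing would force a nugatory crossing. The only difference is one of detail: the paper asserts that last implication in a single sentence, whereas you justify it by noting that such a crossing would be a bridge of the Seifert graph and hence nugatory --- a correct elaboration of the same step rather than a different argument.
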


\begin{proof}

Since $D$ is positive and $V_1=0$, by Theorem \ref{V_1 tree} we know that the reduced $A$-state graph is a tree. 

Consider a pair of adjacent vertices in the tree. If the corresponding vertices were connected by only one edge in the (unreduced) $A$-state graph, then the diagram would not be reduced. Since our diagram $D$ is reduced, we conclude that any pair of adjacent vertices in the reduced $A$-state graph are connected by at least two edges in the $A$-state graph. Therefore the number of crossings in $D$ is at least $2(s-1)$. 

Observe that by Theorem \ref{min jones degree = genus of pos},
\begin{align*}
    \frac{c-s+1}{2} &= \min \deg V\\
    2c-2(s-1) &= 4\min \deg V\\
    c &= 2(s-1)+4\min \deg V-c
\end{align*}

As $c\geq 2(s-1)$, we must have that $4\min \deg V-c\geq 0 $ and so $4\min \deg V \geq c$.

\end{proof}

\begin{corollary}
Let $K$ be a positive link with $V_1=0$. Then $K$ has a positive diagram $D$ for which $4\min \deg V \geq  c(D)$. 
\end{corollary}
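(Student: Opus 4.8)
The plan is to deduce this immediately from Lemma \ref{c(D) leq 4g} once we produce, for the given link $K$, a positive diagram with no nugatory crossings. Since $K$ is a positive link it has \emph{some} positive diagram $D_0$. If $D_0$ happens to have no nugatory crossings we take $D = D_0$ and are done. Otherwise I would remove nugatory crossings one at a time until none remain.

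The crux is the observation that removing a nugatory crossing preserves positivity. A nugatory crossing is one at which there is a simple closed curve in the plane meeting the diagram transversely in that single crossing; undoing it amounts to a Reidemeister I (untwisting) move that deletes exactly that crossing and leaves every other crossing — and in particular the sign of every other crossing, together with the orientation of every strand — unchanged. Hence the new diagram $D_1$ still represents $K$, is still positive, and satisfies $c(D_1) = c(D_0) - 1$. Iterating, the crossing number strictly decreases at each step, so after finitely many steps we reach a positive diagram $D$ of $K$ with no nugatory crossings.

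Finally, $V = V_K$ is a link invariant, so the hypothesis $V_1 = 0$ is a statement about $K$ and holds regardless of which diagram we use; in particular $D$ is a reduced positive diagram of $K$ with $V_1 = 0$. Applying Lemma \ref{c(D) leq 4g} to $D$ gives $4 \min \deg V \geq c(D)$, which is exactly the claim.

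The one step I would take care with is confirming that each reduction step really yields a smaller \emph{positive} diagram and not merely a smaller diagram — i.e. that the untwisting Reidemeister I move at a nugatory crossing does not alter the sign of any surviving crossing. Everything else is routine bookkeeping: termination follows from monotonicity of the crossing number, and the invariance of $V$ handles the transfer of the hypothesis $V_1 = 0$ from $K$ to the reduced diagram $D$.
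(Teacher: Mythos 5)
Your argument is correct and is exactly the route the paper intends: the paper states this corollary with no written proof, treating it as immediate from Lemma \ref{c(D) leq 4g} once one observes that a positive diagram can be reduced (by untwisting nugatory crossings, which preserves positivity and the link type) and that $V_1=0$ is a property of the link $K$ rather than of the diagram. You have simply made those implicit steps explicit, so there is nothing to add.
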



\subsection{Balanced and Burdened link diagrams }

\begin{proposition} \label{|n(D_i) - n(D_0)| leq i}
Let $D_0$ be a link diagram. 
\begin{enumroman}
    \item Let $D_1$ be a diagram obtained by smoothing a crossing according to Seifert's algorithm. Then
    \begin{enumalph}
        \item $|n(D_1)-n(D_0)|=1$, and 
         \item 
         $||B(D_1)| - |B(D_0)|| = \begin{cases} 0 &\text{if a negative crossing is smoothed, and }\\
         1 &\text{if a positive crossing is smoothed.}
         \end{cases} $ 
    \end{enumalph}

    \item Let $D_0$ be a link diagram, and $(x_1, \dots, x_m)$ a sequence of distinct crossings in $D_0$. Let $D_i$ be the diagram obtained by smoothing (according to Seifert's algorithm) crossings $x_1$ through $x_i$. Then 
    
    \begin{enumalph}
        \item $|n(D_i) - n(D_0)| \leq i$, and 
         \item 
        $||B(D_i)| - |B(D_0)|| \leq i$.
    \end{enumalph}

\end{enumroman}
\end{proposition}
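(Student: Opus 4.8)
The plan is to establish part (i) by a direct local analysis of the oriented (Seifert) resolution, and then deduce part (ii) by induction on $i$ using the triangle inequality.

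For (i)(a): smoothing a crossing $x$ of $D_0$ according to Seifert's algorithm is the oriented resolution at $x$. The two strands of $D_0$ passing through $x$ either lie on a single link component or on two distinct components; in the former case the resolution cuts that component into two and $n$ increases by $1$, in the latter it fuses the two components into one and $n$ decreases by $1$. So $|n(D_1)-n(D_0)|=1$ in all cases.

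For (i)(b): by the Remark preceding Theorem~\ref{V_1 tree}, the Seifert smoothing at a positive crossing is its $A$-smoothing, hence (by the mirror symmetry of the $A/B$ convention) the Seifert smoothing at a negative crossing is its $B$-smoothing. If $x$ is negative, then the all-$B$ resolution of $D_1$ is literally the same picture as the all-$B$ resolution of $D_0$, since the modification turning $D_0$ into $D_1$ is precisely the $B$-smoothing of $x$; hence $|B(D_1)|=|B(D_0)|$. If $x$ is positive, then the all-$B$ resolutions of $D_1$ and of $D_0$ agree at every crossing except $x$, where one uses the $A$-smoothing and the other the $B$-smoothing; since replacing the resolution of a single crossing in an otherwise fixed state changes the circle count by exactly $1$ (the two local reconnections either split one circle into two or merge two circles into one), we get $||B(D_1)|-|B(D_0)||=1$.

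For (ii): induct on $i$, the case $i=0$ being trivial. Because the crossings $x_1,\dots,x_m$ are distinct, $x_i$ is still a crossing of $D_{i-1}$, and since Seifert smoothing preserves all strand orientations, the Seifert smoothing of $x_i$ in $D_{i-1}$ yields exactly $D_i$ and the sign of $x_i$ in $D_{i-1}$ equals its sign in $D_0$. Applying part (i) to the pair $(D_{i-1},D_i)$ gives $|n(D_i)-n(D_{i-1})|=1$ and $||B(D_i)|-|B(D_{i-1})||\le 1$; combining with the inductive hypotheses $|n(D_{i-1})-n(D_0)|\le i-1$ and $||B(D_{i-1})|-|B(D_0)||\le i-1$ via the triangle inequality yields $|n(D_i)-n(D_0)|\le i$ and $||B(D_i)|-|B(D_0)||\le i$. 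The only delicate point, and the one I would treat most carefully, is the bookkeeping relating the $A/B$ smoothings to the Seifert smoothing together with the elementary fact that the two resolutions of a single crossing (in a fixed resolution of all the others) differ in circle count by exactly one; granting those, both parts are short.
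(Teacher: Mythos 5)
Your proposal is correct and follows essentially the same route as the paper: a local case analysis for part (i) (including the observation that Seifert smoothing is the $B$-smoothing at a negative crossing and the $A$-smoothing at a positive one, so the all-$B$ states either coincide or differ at a single crossing), followed by part (ii) via the triangle inequality — your induction on $i$ is just the paper's telescoping sum in a different guise. If anything, your justification of (i)(b) is more explicit than the paper's one-sentence version.
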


\begin{proof}
Part $(i)(a)$ is clear: smoothing a crossing involving one component creates a new component, and smoothing a crossing involving two components combines them into one. For $(i)(b)$ - $B$-smoothing a negative crossing does not change the total number of $B$-circles involved, but $B$-smoothing a positive crossing will either increase or decrease the number of $B$-circles by $1$. \\

For part $(ii)$, we rewrite $|n(D_i)-n(D_0)|$ as a telescoping sum to obtain the inequality \begin{align*}
    |n(D_i)-n(D_0)| &=|n(D_i)-n(D_{i-1})+n(D_{i-1}) - n(D_{i-2}) + \dots + n(D_{1}) - n(D_0)| \\
    &\leq \sum_{k=0}^{i-1} |n(D_{k+1})-n(D_{k})|, \hspace{5pt} \text{ which by Part $(a)$ is}\\
    &= \sum_{k=0}^{i-1} 1 \\
    &= i.
\end{align*}
Parallel argument for $|B|$. 
\end{proof}

We now introduce the concepts of a Balanced link diagram and a Burdened link diagram. 

\begin{definition}
A \textbf{Balanced link diagram} is a (non-split) positive link diagram in which every pair of $A$-circles share exactly $0$ or $2$ crossings, and the reduced $A$-state graph is a tree. 
\end{definition}

An example appears in Figure \ref{reduced A-state graph example}. Observe that if $s(D)$ is the number of Seifert circles, then in a Balanced link diagram $D$ we have that $c(D)=2(|A(D)|-1)=2(s(D)-1)$. 

\vspace{6pt}

\begin{theorem}\label{comp=Bcircles}
Let $D$ be a Balanced link diagram. Then $n(D)=|B(D)|$. 
(The number of components of $D$ is equal to the number of $B$-circles.)
\end{theorem}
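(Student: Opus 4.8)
The plan is to induct on $s = s(D) = |A(D)|$, the number of Seifert circles of $D$ (recall that $D$ being Balanced forces $c(D) = 2(s(D) - 1)$). If $s = 1$ then $c(D) = 0$, so $D$ is a single crossingless circle and $n(D) = |B(D)| = 1$. For the inductive step, assume $s \ge 2$. The reduced $A$-state graph $T$ of $D$ is a tree on $s \ge 2$ vertices, so it has a leaf $v$; let $C_v$ be the corresponding Seifert circle, $C_w$ its unique neighbor in $T$, and $x_1, x_2$ the two crossings shared by $C_v$ and $C_w$ (exactly two, since $D$ is Balanced). Let $D'$ be obtained from $D$ by smoothing $x_1$ and $x_2$ along Seifert's algorithm, which for the positive crossings of $D$ is the $A$-smoothing at each. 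Since $x_1$ and $x_2$ are the only crossings incident to $C_v$, after these two smoothings $C_v$ has become a crossingless split component, so $D' = D'' \sqcup C_v$ for a diagram $D''$. One checks directly that $D''$ is again Balanced with $s - 1$ Seifert circles: it is positive; its $A$-circles are the $C_i$ with $i \ne v$; its reduced $A$-state graph is $T$ with the leaf $v$ deleted, again a tree (so $D''$ is non-split); and every pair of its $A$-circles still shares $0$ or $2$ crossings. By the inductive hypothesis $n(D'') = |B(D'')|$, and since a crossingless split circle adds one to each count,
\[
n(D') = n(D'') + 1 = |B(D'')| + 1 = |B(D')| .
\]

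It therefore suffices to prove that the passage from $D$ to $D'$ changes the number of link components and the number of $B$-state circles by the same amount, i.e.
\[
n(D) - n(D') = |B(D)| - |B(D')| ;
\]
granting this, $n(D) - |B(D)| = n(D') - |B(D')| = 0$ and the induction is complete. By Proposition \ref{|n(D_i) - n(D_0)| leq i}(i), each of the two Seifert smoothings changes $n$ by exactly $\pm 1$ and $|B|$ by exactly $\pm 1$, so both differences lie in $\{-2, 0, 2\}$; the content of the claim is that they agree.

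This last equality is the crux, and I would establish it by a local analysis near the leaf. In $D$ the Seifert circle $C_v$ consists of exactly two arcs running between $x_1$ and $x_2$, each crossing-free in its interior, and the move taking $D$ to $D'$ is supported near $x_1$ and $x_2$: it $A$-smooths these two crossings, replacing the two-crossing clasp with $C_w$ by the split circle $C_v$ together with a rerouted arc, and leaving the rest of the diagram (in particular any crossings of $C_w$ with other Seifert circles) untouched. One reads off $n(D) - n(D')$ by tracing the link through this local picture, and $|B(D)| - |B(D')|$ by doing the same in the all-$B$ state (where $x_1$ and $x_2$ are $B$-smoothed instead); in each of the finitely many local patterns the two changes turn out to coincide. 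Equivalently, and perhaps more transparently: attaching the leaf to $D''$ corresponds to plumbing a positive Hopf band onto the Seifert surface of $D''$, and one must check only that such a plumbing changes the number of boundary components of the surface exactly as it changes the number of $B$-state circles of the diagram. This local verification is the one real obstacle; everything else is bookkeeping.
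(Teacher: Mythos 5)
Your induction skeleton matches the paper's: peel off a leaf $A$-circle, apply the inductive hypothesis to the smaller Balanced diagram, and reduce everything to showing that reattaching the leaf changes $n$ and $|B|$ by the same amount. The gap is in how you propose to close that last step. Whether attaching the leaf along the two arcs $x$ and $y$ of $C_w$ creates a new link component ($n$ increases by $1$) or merges two components ($n$ decreases by $1$) is decided by whether $x$ and $y$ lie on the same component of the smaller diagram; whether it creates or merges $B$-circles is decided by whether $x$ and $y$ lie on the same $B$-circle. Both are global connectivity questions about the rest of the diagram, invisible in any neighborhood of $x_1$ and $x_2$, and a priori all four combinations (same/different component $\times$ same/different $B$-circle) could occur --- the paper's Figures \ref{component poss} and \ref{B-poss} enumerate exactly these possibilities. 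So ``tracing the link through the local picture'' cannot determine which case you are in, and the Hopf-band-plumbing reformulation inherits the same problem: whether the plumbing raises or lowers the number of boundary components again depends on global data. Proposition \ref{|n(D_i) - n(D_0)| leq i} only gives you that each difference is $\pm 1$ per smoothing, not that the signs agree.

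What is actually needed is the statement that the mixed cases never occur: two arcs on the same $A$-circle lie on the same $B$-circle if and only if they lie on the same component. This is precisely the paper's synchronization property (Theorem \ref{arcs on same A-circle are part of same B-circle if and only if they are part of the same component}), and establishing it occupies most of Section 3: Lemma \ref{Every matching pair is synchronized} shows by a separate induction --- smoothing a matching pair splits $D$ into two smaller Balanced diagrams and comparing the ways their $B$-circles and components can reassemble --- that every matching pair is synchronized, and Lemma \ref{synchronized means for same A: same B iff same c} upgrades this from matching pairs to arbitrary pairs of arcs on an $A$-circle by walking through the tree. Your proof would be complete if you inserted this synchronization result at the point where you invoke the ``local verification''; as written, that step is asserted rather than proved, and it is the mathematical heart of the theorem.
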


This is the key theorem needed to prove our main result, and the last section of this paper leads up to a proof of Theorem \ref{comp=Bcircles}.\\


\begin{definition}
A \textbf{Burdened link diagram} is a (non-split) positive link diagram in which every pair of $A$-circles share $0$ or \underline{at least $2$} crossings, and the reduced $A$-state graph is a tree.
\end{definition}

Observe that for all Burdened link diagrams, there exists a (not necessarily unique, possibly empty) sequence of crossings $(x_1, \dots, x_m)$ such that after smoothing every $x_i$, we are left with a Balanced link diagram. We call such a sequence a \textbf{Balancing sequence}.\\ 

\begin{lemma} \label{|B(D_0)| leq 2m + n(D_0)}
Let $D_0$ be a Burdened link diagram, and $(x_1, \dots, x_m)$ a Balancing sequence for $D_0$. Then $$|B(D_0)| \leq 2m + n(D_0).$$
\end{lemma}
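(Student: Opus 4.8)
The plan is to peel off the crossings of the Balancing sequence one at a time and track how both $|B|$ and $n$ change, then combine the two.

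First I would apply Proposition \ref{|n(D_i) - n(D_0)| leq i}(ii)(b) directly: if $D_m$ denotes the Balanced diagram obtained from $D_0$ by smoothing $x_1,\dots,x_m$ according to Seifert's algorithm, then $\bigl||B(D_m)| - |B(D_0)|\bigr| \leq m$, so in particular $|B(D_0)| \leq |B(D_m)| + m$. Next I would invoke Theorem \ref{comp=Bcircles} on the Balanced diagram $D_m$ to rewrite $|B(D_m)| = n(D_m)$, giving $|B(D_0)| \leq n(D_m) + m$. Finally I would use Proposition \ref{|n(D_i) - n(D_0)| leq i}(ii)(a), which says $|n(D_m) - n(D_0)| \leq m$, hence $n(D_m) \leq n(D_0) + m$. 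Chaining these three inequalities yields $|B(D_0)| \leq n(D_0) + m + m = 2m + n(D_0)$, which is exactly the claim.

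The one point that needs a sentence of care is that $D_m$ really is a Balanced link diagram, so that Theorem \ref{comp=Bcircles} applies — but this is precisely the definition of a Balancing sequence, so nothing extra is required. I should also note that smoothing according to Seifert's algorithm on a positive diagram is the $A$-smoothing (as recorded in the remark preceding Theorem \ref{V_1 tree}), so the intermediate diagrams $D_i$ obtained by Seifert-smoothing crossings of $D_0$ are the same as those one would obtain by $A$-smoothing, keeping the hypotheses of Proposition \ref{|n(D_i) - n(D_0)| leq i} in force; since that proposition is stated for arbitrary link diagrams and Seifert smoothings, this is automatic.

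There is no real obstacle here: the lemma is a two-line consequence of the already-established Proposition \ref{|n(D_i) - n(D_0)| leq i} and the (as-yet-unproven, but assumable) Theorem \ref{comp=Bcircles}. If anything, the only thing to be vigilant about is the direction of the inequalities — each absolute-value bound is used as a one-sided bound ($|B(D_0)|$ bounded above in terms of $|B(D_m)|$, and $n(D_m)$ bounded above in terms of $n(D_0)$) — and making sure the two applications of Proposition \ref{|n(D_i) - n(D_0)| leq i} are both to the same $m$-term Balancing sequence. I would present this as a short displayed chain of inequalities with the three justifications annotated inline.
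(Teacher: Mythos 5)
Your proposal is correct and follows exactly the same route as the paper: apply Proposition \ref{|n(D_i) - n(D_0)| leq i} to get $|B(D_0)| \leq m + |B(D_m)|$ and $n(D_m) \leq m + n(D_0)$, use Theorem \ref{comp=Bcircles} on the Balanced diagram $D_m$ to identify $|B(D_m)| = n(D_m)$, and chain the inequalities. Nothing is missing.
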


\begin{proof}
Proposition \ref{|n(D_i) - n(D_0)| leq i} tells us that $n(D_m) \leq m + n(D_0)$ and $|B(D_0)|\leq m + |B(D_m)|$. By definition of Balancing sequences, diagram $D_m$ is Balanced, and so $n(D_m)=|B(D_m)|$ by Theorem \ref{comp=Bcircles}. Thus 
\begin{equation*}
    |B(D_0)|\leq m + |B(D_m)| = m + n(D_m) \leq  m + (m + n(D_0)) = 2m + n(D_0).
\end{equation*}
\end{proof}

\begin{corollary}\label{bound on |B|}
Let $D_0$ be a Burdened link diagram. Then $$|B(D_0)| \leq 8\min\deg V - 2c(D_0) + n(D_0).$$
\end{corollary}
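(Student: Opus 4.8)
The plan is to chain together the bound from Lemma \ref{|B(D_0)| leq 2m + n(D_0)} with the genus/degree identity of Theorem \ref{min jones degree = genus of pos}, so the whole task reduces to controlling the length $m$ of a Balancing sequence. First I would observe that smoothing the crossings $x_1, \dots, x_m$ turns the Burdened diagram $D_0$ into a Balanced diagram $D_m$, and that a Balanced diagram satisfies $c(D_m) = 2(s(D_m)-1)$. I would then relate $c(D_m)$ to $c(D_0)$ and $m$: each Seifert-smoothing removes exactly one crossing, so $c(D_m) = c(D_0) - m$. Similarly, since Seifert smoothing an $A$-circle either merges two $A$-circles or (in the relevant orientation-compatible case) never increases their count in a way that matters, I need to track how $s$ changes; the cleanest route is to note that the reduced $A$-state graph is a tree both before and after (it stays a tree because we only delete duplicate edges, never cut the tree), so $|A(D_m)| = |A(D_0)|$ is forced — smoothing a redundant crossing between two $A$-circles that are already joined by $\geq 2$ crossings does not disconnect them, hence does not change the vertex set of the reduced graph, i.e. $s(D_m) = s(D_0)$.

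With $c(D_m) = c(D_0) - m$ and $s(D_m) = s(D_0)$ in hand, the Balanced identity $c(D_m) = 2(s(D_m) - 1)$ becomes $c(D_0) - m = 2(s(D_0) - 1)$, which solves for $m = c(D_0) - 2s(D_0) + 2$. Now I substitute into Lemma \ref{|B(D_0)| leq 2m + n(D_0)}:
\begin{equation*}
|B(D_0)| \leq 2m + n(D_0) = 2\bigl(c(D_0) - 2s(D_0) + 2\bigr) + n(D_0) = 2c(D_0) - 4s(D_0) + 4 + n(D_0).
\end{equation*}
Finally I bring in Theorem \ref{min jones degree = genus of pos}, which gives $\min\deg V = \frac{c(D_0) - s(D_0) + 1}{2}$, hence $s(D_0) = c(D_0) + 1 - 2\min\deg V$. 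Plugging this in:
\begin{equation*}
|B(D_0)| \leq 2c(D_0) - 4\bigl(c(D_0) + 1 - 2\min\deg V\bigr) + 4 + n(D_0) = 8\min\deg V - 2c(D_0) + n(D_0),
\end{equation*}
which is exactly the claimed inequality.

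The main obstacle I anticipate is justifying the two bookkeeping identities cleanly, especially $s(D_m) = s(D_0)$ (equivalently $m = c(D_0) - 2s(D_0) + 2$). One must argue that a Balancing sequence only ever smooths crossings that are "redundant" in the sense that the reduced $A$-state graph is unchanged — a priori a Balancing sequence is defined merely as a sequence whose removal yields a Balanced diagram, so I would want to verify that any such sequence necessarily leaves $s$ fixed, or else restrict to Balancing sequences of minimal length and show those do. Since all the diagrams $D_i$ along the way must remain Burdened (positive, reduced $A$-state graph a tree) for the terminal $D_m$ to be Balanced, and in a Burdened diagram $c \geq 2(s-1)$ with equality iff Balanced, the count $m$ of crossings removed is pinned down to exactly $c(D_0) - 2(s(D_0)-1)$ regardless of which Balancing sequence we pick — this consistency is the small point that makes the substitution valid, and it is the one step I would write out carefully rather than wave through. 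Everything else is the substitution arithmetic shown above.
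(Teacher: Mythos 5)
Your proposal is correct and follows essentially the same route as the paper: both compute $m = c(D_0) - 2(s(D_0)-1) = 4\min\deg V - c(D_0)$ from the Balanced identity $c(D_m) = 2(s(D_m)-1)$ together with $s(D_m)=s(D_0)$ and $c(D_m)=c(D_0)-m$, then substitute into Lemma \ref{|B(D_0)| leq 2m + n(D_0)}. The point you flag as delicate, $s(D_m)=s(D_0)$, is asserted in one line in the paper and is in fact automatic, since $A$-smoothing crossings of a positive diagram leaves the all-$A$ state (hence the set of $A$-circles) unchanged, so no restriction to minimal Balancing sequences is needed.
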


\begin{proof}
Let $(x_1, \dots, x_m)$ be a Balancing sequence, and $D_m$ the Balanced diagram obtained by smoothing all crossings in the sequence. Observe that then $s(D_m)=|A(D_m)|=|A(D_0)|=s(D_0)$, and consider the following: 
\begin{align*}
    c(D_0)-m &= c(D_m) \hspace{45pt} \text{ and since $D_m$ is Balanced, this is }\\
    & = 2(s(D_m)-1), \\
    & = 2(s(D_0)-1). \hspace{15pt} \text{ Rearrange to obtain}\\
    m &= c(D_0)-2(s(D_0)-1) \\
    &= 2(c(D_0)-s(D_0) + 1) - c(D_0)\\
    & = 4\min \deg V(D_0) - c(D_0).
\end{align*}

Then by Lemma \ref{|B(D_0)| leq 2m + n(D_0)},
$$|B(D_0)| \leq  2m + n(D_0) =  8\min\deg V(D_0) - 2c(D_0)+ n(D_0).$$
\end{proof}

\begin{theorem}\label{bound on max degree}
Let $D$ be a Burdened link diagram. Then $$\max\deg V \leq \frac{8\min \deg V + n(D) -1}{2},$$
and when $D$ is a \underline{knot} diagram, $$\max \deg V \leq 4\min\deg V.$$
\end{theorem}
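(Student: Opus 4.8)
The plan is to bound $\max\deg V$ using the standard state-sum estimate from the Kauffman bracket, in terms of the number of $B$-circles, and then feed in the bound on $|B(D)|$ from Corollary \ref{bound on |B|}. Recall that for a link diagram $D$ with $c(D)$ crossings, the Kauffman bracket has $\max$-degree in $A$ at most $c(D) + 2(|B(D)| - 1)$, coming from the all-$B$ state. Converting to the Jones polynomial variable $t = A^{-4}$ and accounting for the writhe normalization $(-A)^{-3w(D)}$, for a positive diagram (where $w(D) = c(D)$) one gets an upper bound for $\max\deg V$ of the form $\frac{1}{4}\bigl(3c(D) + c(D) + 2(|B(D)|-1)\bigr) = \frac{4c(D) + 2|B(D)| - 2}{4} = \frac{2c(D) + |B(D)| - 1}{2}$. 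So first I would carefully pin down this inequality $\max\deg V \le \frac{2c(D) + |B(D)| - 1}{2}$ for a positive diagram, being careful about the direction of the degree shift in the substitution $t = A^{-4}$.

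Next I would substitute the bound from Corollary \ref{bound on |B|}, namely $|B(D)| \le 8\min\deg V - 2c(D) + n(D)$, into this estimate. This gives
\begin{align*}
\max\deg V &\le \frac{2c(D) + \bigl(8\min\deg V - 2c(D) + n(D)\bigr) - 1}{2}\\
&= \frac{8\min\deg V + n(D) - 1}{2},
\end{align*}
which is exactly the first displayed inequality in the statement. Then for the knot case I would set $n(D) = 1$, yielding $\max\deg V \le \frac{8\min\deg V}{2} = 4\min\deg V$, which is the second inequality. The arithmetic here is entirely routine once the two input inequalities are in hand.

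The step I expect to be the main obstacle — or at least the one requiring the most care — is establishing the Kauffman-bracket degree bound $\max\deg V \le \frac{2c(D) + |B(D)| - 1}{2}$ with the correct constants and sign conventions. One has to be attentive to: (a) which all-same-smoothing state realizes the maximal $A$-degree of the bracket given that $D$ is positive (it is the all-$B$ state, since the $A$-state is the Seifert state and contributes to the minimal degree), (b) the exact form of the writhe correction $f_D = (-A)^{-3w(D)}\langle D\rangle$ together with $w(D) = c(D)$, and (c) the fact that the substitution $A \mapsto t^{-1/4}$ reverses the role of max and min degree, so an upper bound on the $A$-degree of $f_D$ becomes, after dividing by $-4$ and flipping, the relevant bound on $\max\deg V$. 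A clean way to organize this is to quote the well-known inequality $\max\deg_A\langle D\rangle \le c(D) + 2(|B(D)|-1)$ (the span bound, $B$-side) and then do the normalization bookkeeping explicitly in a short display. Once that inequality is correctly stated, the rest of the proof is the two-line substitution above.
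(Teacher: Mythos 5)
Your proposal is correct and follows essentially the same route as the paper: bound the degree of the all-$B$ state's contribution to the Kauffman bracket, translate to $\max\deg V \le \frac{2c+|B|-1}{2}$ using $w=c$ for a positive diagram, and then substitute Corollary \ref{bound on |B|}. One small caution: the bracket estimate you actually need is a \emph{lower} bound on $\min\deg_A\langle D\rangle$ (namely $\min\deg_A\langle D\rangle \ge -c - 2(|B|-1)$, realized by the all-$B$ state), not an upper bound on $\max\deg_A\langle D\rangle$ as you phrase it --- but since you explicitly flag this sign bookkeeping as the delicate step and your arithmetic $\frac{1}{4}\bigl(3c + c + 2(|B|-1)\bigr) = \frac{2c+|B|-1}{2}$ is the correct computation, the argument goes through as in the paper.
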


\begin{proof}

Consider that the lowest possible degree term in the Kauffman bracket polynomial would be contributed by the all-$B$ state. (For more details of the Kauffman bracket polynomial, we direct the reader to \cite{Kauffman}.) Since our $D$ is positive, the all-$B$ state contribution to the Kauffman bracket polynomial is of degree $-3w-c-2(|B|-1)=-4c-2|B|+2$. Since no state can contribute a term of degree strictly less than that of the all-$B$ state, we know that the minimal degree of the Kauffman bracket polynomial is greater than or equal to $-4c-2|B|+2$. This means that for the Jones polynomial $V$, we have 

\begin{align*}
    \max \deg V & \leq (-1/4)(-4c-2|B|+2)\\
    & = \frac{2c+|B|-1}{2}\\
    &\leq \frac{2c+(8\min\deg V - 2c + n)-1}{2} \text{ ( by Cor. \ref{bound on |B|})}\\
    &= \frac{8\min \deg V + n -1}{2}.
\end{align*}

And if $D$ is a knot diagram, then $n(D)=1$ and so $\max \deg V \leq 4\min \deg V.$
\end{proof}

To remove the trouble of having to procure a Burdened diagram, we can leverage known results about positive fibered links. In \cite{Stoimenow}, as a corollary to \ref{V_1 tree}, we have:
\begin{corollary}(Stoimenow) \cite{Stoimenow} \label{V_1=0 iff fibered}
 For a positive link, $V_1=0$ if and only if $L$ is fibered.
\end{corollary}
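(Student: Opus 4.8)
The corollary is topological in nature once we apply Theorem \ref{V_1 tree}. Fix a reduced positive diagram $D$ of $L$. Since $A$-smoothing coincides with Seifert smoothing on a positive diagram, the reduced $A$-state graph $G$ of $D$ is exactly its reduced Seifert graph, and reducedness of $D$ (no nugatory crossings) forces every edge of $G$ to have multiplicity at least $2$ in the full Seifert multigraph. By Theorem \ref{V_1 tree}, $V_1 = 0$ iff $G$ is a tree, so the task is to show $L$ is fibered $\iff$ $G$ is a tree. Throughout write $F = F_D$ for the Bennequin surface coming from Seifert's algorithm on $D$: a union of $s(D)$ disks, one per Seifert circle, joined by $c(D)$ bands, one per crossing, each band carrying a single positive half-twist, with $b_1(F) = c(D) - s(D) + 1$.

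For the implication ``$G$ a tree $\Rightarrow L$ fibered'' I would induct on $s(D)$. The base case is $s = 1$: then $c(D) = 0$, $L$ is the unknot, and $F$ is a disk, which is a fiber surface. In general, the subsurface of $F$ spanned by two adjacent disks together with the $k \geq 2$ bands joining them is a plumbing of $k-1$ positive Hopf bands (a single Hopf band when $k=2$), hence a fiber surface by Stallings' theorem that a Murasugi sum of fiber surfaces is a fiber surface. Choosing a leaf $v$ of the tree $G$, joined to its unique neighbor $w$ by $k$ bands, deletion of $v$ and its bands yields a reduced positive diagram $D'$ whose reduced Seifert graph $G - v$ is again a tree, so $F_{D'}$ is a fiber surface by induction; and $F$ is a Murasugi sum of $F_{D'}$ with this $(2,k)$-block, plumbed along a disk in $w$. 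Applying Stallings' theorem once more, $F$ is a fiber surface and $L$ is fibered. One should check that this really is a plumbing along an embedded disk; the tree structure of $G$ is what makes this possible, after an isotopy of $D$ if needed, since the bands from $v$ can then be taken to meet $\partial w$ along an arc disjoint from the rest of $F_{D'}$.

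For the converse I argue contrapositively: assume $G$ contains a cycle. Since $D$ is positive, $F$ realizes the genus of $L$ (positive diagrams realize the genus --- classical, cf.\ Theorem \ref{min jones degree = genus of pos}), and the fiber surface of a fibered link is, up to isotopy, its unique minimal-genus Seifert surface; hence if $L$ were fibered, $F$ would be a fiber surface. But a fiber surface has monodromy represented, up to conjugacy and convention, by $V^{-1}V^{\mathsf T}$ for any Seifert matrix $V$ of $F$, so $\det V = \pm 1$ and $\Delta_L$ is monic of degree $b_1(F)$. Now pick the basis of $H_1(F)$ adapted to $G$ --- the $k - 1$ ``bigon'' loops for each edge of multiplicity $k$, together with one loop running around each fundamental cycle of $G$ --- and compute $V$ in this basis; the fundamental-cycle loops traverse their disks without picking up the $\pm 1$ self-framing of a Hopf band, and a direct computation shows this forces $\lvert \det V \rvert \neq 1$. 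Then $\Delta_L$ is not monic of degree $b_1(F)$, so $L$ is not fibered.

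The main obstacle is exactly that last step: arranging the Seifert matrix of the Bennequin surface in the $G$-adapted basis and verifying that any cycle in $G$ obstructs $\det V = \pm 1$, i.e.\ that a Bennequin surface whose reduced Seifert graph contains a cycle is never a fiber surface. An alternative route to the same conclusion would be to show that the complement of such a Bennequin surface is not a handlebody, so its fundamental group is not free, which a fiber surface's complement must be. A secondary, more routine point is the embeddedness check in the first direction --- confirming that the tree structure of $G$, possibly after isotoping $D$, genuinely lets one strip off one $(2,k)$-block at a time by plumbing along a disk.
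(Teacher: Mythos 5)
The paper does not actually prove this corollary --- it is quoted from Stoimenow \cite{Stoimenow} as a known consequence of Theorem \ref{V_1 tree} and used as a black box --- so there is no in-paper argument to compare against; the only question is whether your reconstruction stands on its own. Your reduction via Theorem \ref{V_1 tree} to the statement ``$L$ is fibered if and only if the reduced Seifert graph $G$ of a reduced positive diagram is a tree'' is the right move, and the forward direction (tree implies fibered, by stripping leaf blocks and applying Stallings' theorem to Murasugi sums of $(2,k)$-torus-link fiber surfaces) is the standard argument. The embeddedness point you flag there is real but genuinely routine: a leaf circle together with its bands lies in a single complementary region of the rest of the diagram, so the plumbing disk can be found inside the disk of the neighboring Seifert circle.

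The genuine gap is the converse, and you identify it yourself. The entire mathematical content of ``$G$ contains a cycle $\Rightarrow$ $L$ is not fibered'' is the claim that the Seifert matrix $V$ of the Bennequin surface then satisfies $\lvert \det V \rvert \neq 1$, and you assert this as ``a direct computation'' without carrying it out. This is not a bookkeeping step: it is essentially Cromwell's analysis of homogeneous links, in which the leading coefficient of the Alexander polynomial of the Bennequin surface is computed block by block from the Seifert graph and shown to have absolute value greater than $1$ whenever some block is not a reduced multiple edge. That computation is the substance of Stoimenow's corollary in this direction, and without it your ``only if'' argument reduces to a restatement of what must be shown. The surrounding logic is sound --- you correctly use only the true implication (fibered $\Rightarrow$ $\Delta$ monic of degree $b_1(F)$, not its false converse) together with the fact that positive diagrams realize the genus --- but as written the proof is incomplete, and the honest course is either to supply the determinant computation (or cite Cromwell for it) or to do as the paper does and cite the equivalence to \cite{Stoimenow} outright.
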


(For more information about this theorem, about what it means for a link to be fibered, and for an overview of work done in these areas in particular having to do with state diagrams and knot positivity, we refer the reader to Stoimenow's work, and also to \cite{FKP13} and \cite{Fut13}.)

This means we could have given an  equivalent definition: a \textbf{Burdened diagram} is a (non-split) reduced, positive link diagram of a fibered positive link. 

With this observation, Theorem \ref{bound on max degree} gives us our main result:

\begin{corollary} \label{max deg V leq 4 min deg V }
 If $K$ is a fibered positive knot,
 then $\max \deg V \leq 4\min\deg V.$ 
\end{corollary}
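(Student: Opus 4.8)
The plan is to produce, from any fibered positive knot $K$, a \emph{Burdened} knot diagram, and then quote Theorem \ref{bound on max degree}. Concretely, since $K$ is positive it has a positive diagram $D_0$; as long as $D_0$ has a nugatory crossing, that crossing is necessarily positive (the diagram has no negative crossings), so a Reidemeister~I untwist removes it, keeps the diagram positive, and strictly lowers the crossing count. After finitely many such moves I obtain a reduced positive diagram $D$ of $K$. Because $K$ is fibered, Corollary \ref{V_1=0 iff fibered} gives $V_1=0$, and then Theorem \ref{V_1 tree} tells us the reduced $A$-state graph of $D$ is a tree.

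Next I would verify that $D$ is a Burdened diagram. It is non-split and positive by construction, and its reduced $A$-state graph is a tree, so the only thing to check is that every pair of $A$-circles shares $0$ or at least $2$ crossings. Since $D$ is positive, its $A$-circles are its Seifert circles. If two of them, $C_1$ and $C_2$, shared exactly one crossing $x$, they would be adjacent in the (tree) reduced $A$-state graph, so deleting that edge would disconnect the graph, leaving $x$ as the only crossing joining a Seifert circle on one side to a Seifert circle on the other; hence $x$ would be nugatory, contradicting the reducedness of $D$. (This is exactly the observation already used in the proof of Lemma \ref{c(D) leq 4g}.) Therefore $D$ is Burdened.

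Since $D$ is a Burdened \emph{knot} diagram, Theorem \ref{bound on max degree} yields $\max\deg V \leq 4\min\deg V$, which is the assertion. I do not expect a real obstacle here: the substantive work is entirely contained in Theorem \ref{comp=Bcircles} (hence in Theorem \ref{bound on max degree}), which I am permitted to assume, and the only mild point requiring care is the identification of reduced positive diagrams of fibered positive links with Burdened diagrams — i.e.\ the ``no pair shares exactly one crossing'' condition — which reduces, as above, to the standard fact that a crossing lying on a bridge of the Seifert graph of a connected diagram is nugatory.
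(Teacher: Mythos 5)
Your proposal is correct and follows the same route as the paper: pass to a reduced positive diagram, use Corollary \ref{V_1=0 iff fibered} and Theorem \ref{V_1 tree} to see it is Burdened, and apply Theorem \ref{bound on max degree}. You simply spell out the verification of the Burdened condition (via the nugatory-crossing argument from Lemma \ref{c(D) leq 4g}) that the paper leaves implicit in its remark that a Burdened diagram is equivalently a reduced positive diagram of a fibered positive link.
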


\begin{proof}
Let $D$ be a positive diagram of $K$. Since $K$ is fibered, $V_1=0$ by \ref{V_1=0 iff fibered}. We can assume $D$ is reduced, so we can assume that it is a Burdened diagram, and then $\max \deg V \leq 4 \min \deg V$ by Theorem \ref{bound on max degree}.
\end{proof}

\subsection{Completing the positivity classification of knots with crossing number $\leq 12$}

Positivity is already known for all but seven knots with crossing number $\leq 12$. These remaining knots are $12_{n148}, 12_{n276}, 12_{n329}, 12_{n366}, 12_{n402}, 12_{n528},$ and $12_{n660}$. We now have the tools to show that all of these knots are not positive.

As noted in \cite{AT17}, \say{positivity unknown} for these knots means that it was unknown whether the knot \textit{or its mirror} is positive. As we have seen, it is a standard result that the minimum degree of the Jones polynomial of a positive knot is positive. Looking at the Jones polynomials given on KnotInfo \cite{KnotInfo} for our seven knots, we see that every exponent is negative. Therefore, the knots given are definitely not positive, but it remains to show that their mirrors could not have positive diagrams. The Jones polynomial of a knot's mirror is obtained by substituting $t^{-1}$ for $t$, so the mirrors of our seven knots have Jones polynomials with all positive exponents. These polynomials are:

\begin{align*}
    \text{Jones polynomial of } 12_{n148}!: & \hspace{7pt} t^3 + t^6 - 2t^7 + 3t^8 -3t^9 + 3t^{10} -3t^{11} + 2t^{12} - t^{13}\\
    12_{n276}!: & \hspace{7pt} t^3 + 2t^6 - 3t^7 + 4t^8 - 5t^9 + 4t^{10} - 4t^{11} + 3t^{12} - t^{13}\\
    12_{n329}!: & \hspace{7pt} t^3 + 2t^6 - 3t^7 + 3t^8 - 4t^9 + 4t^{10} - 3t^{11} + 2t^{12} - t^{13}\\
    12_{n366}!: & \hspace{7pt} t^3 - t^5 + 3t^6 - 4t^7 + 5t^8 -5t^9 + 5t^{10} - 4t^{11} + 2t^{12} - t^{13}\\
    12_{n402}!: & \hspace{7pt} t^3 - t^7 + 2t^8 - t^9 + 2t^{10} - 2t^{11} + t^{12} - t^{13}\\
    12_{n528}!: & \hspace{7pt} t^3 - t^5 + 4t^6 - 5t^7 + 6t^8 - 7t^9 + 6t^{10} - 5t^{11} + 3t^{12} - t^{13}\\
    12_{n660}!: & \hspace{7pt} t^3 - 2t^5 + 5t^6 - 6t^7 + 8t^8 - 8t^9 + 7t^{10} - 6t^{11} + 3t^{12} - t^{13}\\
\end{align*}

However, we also note that each knot is fibered, has $\min \deg V = 3$, and $\max \deg =13$. Since $13 > 4(3)=12$, our Corollary \ref{max deg V leq 4 min deg V } tells us that in fact none of these seven knots can be a positive knot. This completes classification of all knots of crossing number $\leq 12$ as positive or not positive.

\section{Proof of Theorem \ref{comp=Bcircles}}

This section will culminate in a proof of our key theorem from the previous section: 

\textbf{Theorem \ref{comp=Bcircles}} \textit{
Let $D$ be a Balanced link diagram. Then $n(D)=|B(D)|$.} \\

So, in this section we will always assume that $D$ is a Balanced diagram, and we will need to introduce some new definitions. 

\begin{definition}
Let $A$ and $A'$ be two $A$-circles that share crossings. Since $D$ is Balanced, they share exactly two crossings. 
Call these crossings a \textbf{matching pair}. 
\end{definition}

\begin{figure}[h]
    \centering
    \begin{tikzpicture}[every path/.style={thick}, every
node/.style={transform shape, knot crossing, inner sep=2pt}, knot gap = 9, scale =0.405]

    \begin{scope}[yshift=1.5cm]
      \draw[thick, -{Stealth}] (0.5,-0.5) -- (-1,1);
        \draw[thick, -{Stealth}] (-1,-1) -- (1,1);
        \draw[thick, knot] (1,-1) -- (-0.5,0.5);
        \draw[thick, knot] (-1,-1) -- (0.5,0.5);
    \end{scope}
    
    \begin{scope}[yshift=-1.5cm]
      \draw[thick, -{Stealth}] (0.5,-0.5) -- (-1,1);
        \draw[thick, -{Stealth}] (-1,-1) -- (1,1);
        \draw[thick, knot] (1,-1) -- (-0.5,0.5);
        \draw[thick, knot] (-1,-1) -- (0.5,0.5);
    \end{scope}
    
     \begin{scope}
    \node (boxtl) at (-3, 3) {};
    \node (boxtr) at (-1, 3) {};
    \node (boxbl) at (-3, -3) {};
    \node (boxbr) at (-1, -3) {};
    
    \draw [thick, green] (boxtl.center) -- (boxtr.center);
    \draw [thick, green] (boxtl.center) -- (boxbl.center);
    \draw [thick, green] (boxbl.center) -- (boxbr.center);
    \draw [thick, green] (boxbr.center) -- (boxtr.center);
    \end{scope}
    
    \begin{scope}[xshift=4cm]
    \node (boxtl) at (-3, 3) {};
    \node (boxtr) at (-1, 3) {};
    \node (boxbl) at (-3, -3) {};
    \node (boxbr) at (-1, -3) {};
    
    \draw [thick, green] (boxtl.center) -- (boxtr.center);
    \draw [thick, green] (boxtl.center) -- (boxbl.center);
    \draw [thick, green] (boxbl.center) -- (boxbr.center);
    \draw [thick, green] (boxbr.center) -- (boxtr.center);
    \end{scope}

\begin{scope}[yshift=1.5cm, xshift=8cm]
      \draw[thick, -{Stealth}, magenta] (0.5,-0.5) -- (-1,1);
      \draw [blue, thick, knot]  (1,-1) -- (-0.5,0.5);
      \draw [blue, thick, knot]  (1,-1) -- (0,0);
      \draw [magenta, thick, knot] (0,0) -- (-0.5, 0.5);
      
      \draw [thick, -{Stealth}, blue] (-1,-1) -- (1,1);
      \draw [blue, thick, knot] (-0.5,-0.5) -- (0.5,0.5);
      \draw [magenta, thick, knot] (-1,-1) -- (0,0);
      
    \end{scope}
    
    \begin{scope}[yshift=-1.5cm, xshift=8cm]
      \draw[thick, -{Stealth}, magenta] (0.5,-0.5) -- (-1,1);
      \draw [blue, thick, knot]  (1,-1) -- (-0.5,0.5);
      \draw [blue, thick, knot]  (1,-1) -- (0,0);
      \draw [magenta, thick, knot] (0,0) -- (-0.5, 0.5);
      
      \draw [thick, -{Stealth}, blue] (-1,-1) -- (1,1);
      \draw [blue, thick, knot] (-0.5,-0.5) -- (0.5,0.5);
      \draw [magenta, thick, knot] (-1,-1) -- (0,0);
      
      \node (btl) at (-1, 1) {};
      \node (tbl) at (-1, 2) {};
      \node (btr) at (1, 1) {};
      \node (tbr) at (1, 2) {};
      
      \draw [thick, magenta] (btl.center) .. controls (btl.4 north west) and (tbl.4 south west) .. (tbl.center) ;
      
       \draw [thick, blue] (btr.center) .. controls (btr.4 north east) and (tbr.4 south east) .. (tbr.center) ;
      
    \end{scope}

    \begin{scope}[yshift=0cm, xshift=8cm]
    \node (ttl) at (-1, 2.5) {};
    \node (ttr) at (1, 2.5) {};
    \node (tbl) at (-1, 0.5) {};
    \node (tbr) at (1, 0.5) {};
    
    \node (btl) at (-1, -0.5) {};
    \node (btr) at (1, -0.5) {};
    \node (bbl) at (-1, -2.5) {};
    \node (bbr) at (1, -2.5) {};
    \node (leftcenter) at (-4.5, -0.75) {};
    \node (rightcenter) at (4.5, -0.75) {};
    
    \draw [magenta] (bbl.center) .. controls (bbl.4 north west) and (leftcenter.4 north east) .. (ttl.center);
    
    \draw [blue] (bbr.center) .. controls (bbr.4 north east) and (rightcenter.4 north west) .. (ttr.center);

    \end{scope}
    
     \begin{scope}[xshift=8cm]
    \node (boxtl) at (-3, 3) {};
    \node (boxtr) at (-1, 3) {};
    \node (boxbl) at (-3, -3) {};
    \node (boxbr) at (-1, -3) {};
    
    \draw [thick, green] (boxtl.center) -- (boxtr.center);
    \draw [thick, green] (boxtl.center) -- (boxbl.center);
    \draw [thick, green] (boxbl.center) -- (boxbr.center);
    \draw [thick, green] (boxbr.center) -- (boxtr.center);
    \end{scope}
    
    \begin{scope}[xshift=12cm]
    \node (boxtl) at (-3, 3) {};
    \node (boxtr) at (-1, 3) {};
    \node (boxbl) at (-3, -3) {};
    \node (boxbr) at (-1, -3) {};
    
    \draw [thick, green] (boxtl.center) -- (boxtr.center);
    \draw [thick, green] (boxtl.center) -- (boxbl.center);
    \draw [thick, green] (boxbl.center) -- (boxbr.center);
    \draw [thick, green] (boxbr.center) -- (boxtr.center);
    \end{scope}
    
    
    \begin{scope}[yshift=-4cm]
    \draw [-{Stealth}, double] (0,0) -- (0,-1.5);
    \end{scope}
    
    \begin{scope}[xshift=8cm, yshift=-4cm]
    \draw [-{Stealth}, double] (0,0) -- (0,-1.5);
    \end{scope}
    

\begin{scope}[xshift=0cm, yshift=-9cm]
    \node (boxtl) at (-3, 3) {};
    \node (boxtr) at (-1, 3) {};
    \node (boxbl) at (-3, -3) {};
    \node (boxbr) at (-1, -3) {};
    
    \draw [thick, green] (boxtl.center) -- (boxtr.center);
    \draw [thick, green] (boxtl.center) -- (boxbl.center);
    \draw [thick, green] (boxbl.center) -- (boxbr.center);
    \draw [thick, green] (boxbr.center) -- (boxtr.center);
    \end{scope}
    
    \begin{scope}[xshift=4cm, yshift=-9cm]
    \node (boxtl) at (-3, 3) {};
    \node (boxtr) at (-1, 3) {};
    \node (boxbl) at (-3, -3) {};
    \node (boxbr) at (-1, -3) {};
    
    \draw [thick, green] (boxtl.center) -- (boxtr.center);
    \draw [thick, green] (boxtl.center) -- (boxbl.center);
    \draw [thick, green] (boxbl.center) -- (boxbr.center);
    \draw [thick, green] (boxbr.center) -- (boxtr.center);
    \end{scope}
    
      \begin{scope}[yshift=-7.5cm, xshift=0cm]
    \node (tl) at (-1, 1) {};
    \node (tr) at (1, 1) {};
    \node (bl) at (-1, -1) {};
    \node (br) at (1, -1) {};
    
    \draw [-{Stealth}] (bl.center) .. controls (bl.8 north east) and (tl.8 south east) .. (tl.center);
    \draw [-{Stealth}] (br.center) .. controls (br.8 north west) and (tr.8 south west) .. (tr.center);
    \end{scope}
    
    \begin{scope}[yshift=-10.5cm, xshift=0cm]
    \node (tl) at (-1, 1) {};
    \node (tr) at (1, 1) {};
    \node (bl) at (-1, -1) {};
    \node (br) at (1, -1) {};

    \draw [-{Stealth}] (bl.center) .. controls (bl.8 north east) and (tl.8 south east) .. (tl.center);
    \draw [-{Stealth}] (br.center) .. controls (br.8 north west) and (tr.8 south west) .. (tr.center);
    \end{scope}

    
    \begin{scope}[yshift=-9cm, xshift=8cm]
    \node (ttl) at (-1, 2.5) {};
    \node (ttr) at (1, 2.5) {};
    \node (tbl) at (-1, 0.5) {};
    \node (tbr) at (1, 0.5) {};
    
    \draw [-{Stealth}, magenta] (tbl.center) .. controls (tbl.8 north east) and (ttl.8 south east) .. (ttl.center);
    \draw [-{Stealth}, blue] (tbr.center) .. controls (tbr.8 north west) and (ttr.8 south west) .. (ttr.center);
    
    \node (btl) at (-1, -0.5) {};
    \node (btr) at (1, -0.5) {};
    \node (bbl) at (-1, -2.5) {};
    \node (bbr) at (1, -2.5) {};
    \node (leftcenter) at (-4.5, -0.75) {};
    \node (rightcenter) at (4.5, -0.75) {};

    \draw [-{Stealth}, magenta] (bbl.center) .. controls (bbl.8 north east) and (btl.8 south east) .. (btl.center);
    \draw [-{Stealth}, blue] (bbr.center) .. controls (bbr.8 north west) and (btr.8 south west) .. (btr.center);
    
    \draw [magenta] (bbl.center) .. controls (bbl.4 north west) and (leftcenter.4 north east) .. (ttl.center);
    
    \draw [blue] (bbr.center) .. controls (bbr.4 north east) and (rightcenter.4 north west) .. (ttr.center);
    
     \node (btl) at (-1, -0.5) {};
      \node (tbl) at (-1, 0.5) {};
      \node (btr) at (1, -0.5) {};
      \node (tbr) at (1, 0.5) {};
      
      \draw [thick, magenta] (btl.center) .. controls (btl.4 north west) and (tbl.4 south west) .. (tbl.center) ;
      
       \draw [thick, blue] (btr.center) .. controls (btr.4 north east) and (tbr.4 south east) .. (tbr.center) ;
    
    \end{scope}
    
      \begin{scope}[xshift=8cm, yshift=-9cm]
    \node (boxtl) at (-3, 3) {};
    \node (boxtr) at (-1, 3) {};
    \node (boxbl) at (-3, -3) {};
    \node (boxbr) at (-1, -3) {};
    
    \draw [thick, green] (boxtl.center) -- (boxtr.center);
    \draw [thick, green] (boxtl.center) -- (boxbl.center);
    \draw [thick, green] (boxbl.center) -- (boxbr.center);
    \draw [thick, green] (boxbr.center) -- (boxtr.center);
    \end{scope}
    
    \begin{scope}[xshift=12cm, yshift=-9cm]
    \node (boxtl) at (-3, 3) {};
    \node (boxtr) at (-1, 3) {};
    \node (boxbl) at (-3, -3) {};
    \node (boxbr) at (-1, -3) {};
    
    \draw [thick, green] (boxtl.center) -- (boxtr.center);
    \draw [thick, green] (boxtl.center) -- (boxbl.center);
    \draw [thick, green] (boxbl.center) -- (boxbr.center);
    \draw [thick, green] (boxbr.center) -- (boxtr.center);
    \end{scope}

    \end{tikzpicture}
    \caption{ When we smooth a matching pair, we disconnect the diagram}
    \label{smoothing matching pair disconnects the diagram}
\end{figure}
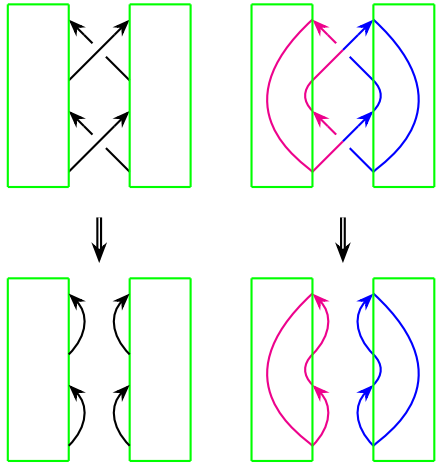

Since the (underlying reduced $A$-graph of this) diagram has a tree structure, if we smooth a matching pair we will disconnect the diagram. (See Figure \ref{smoothing matching pair disconnects the diagram}.) 

Observe that for a Balanced diagram $D$, we have a bijection between the set of matching pairs and the set of edges in the reduced $A$-state graph. Since this graph is a tree, if we start on an $A$-circle and pass through a matching pair (by following along a component or a $B$-circle), we can only return to the $A$-circle by first passing through that matching pair again. 

Then a $B$-circle must pass through a matching pair an even number of times, and also $B$-circles cannot cross themselves, which leaves us with four possibilities of how a $B$-circle could run through a matching pair (see Figure \ref{B-poss}).

Similarly, since components also straddle matching pairs and have a prescribed orientation, we are left with four possibilities of how a component could run through a matching pair (see Figure \ref{component poss}).


\begin{figure}[h]
    \centering
    
    \begin{tikzpicture}
    [every path/.style={thick}, every
node/.style={transform shape, knot crossing, inner sep=2pt}, knot gap = 9, scale = .44]
    
\begin{scope}[yshift=1.5cm, xshift=0cm]
      \draw[thick, orange] (0.5,-0.5) -- (-1,1);
      \draw [orange, thick, knot]  (1,-1) -- (-0.5,0.5);
      \draw [orange, thick, knot]  (1,-1) -- (0,0);
      \draw [orange, thick, knot] (0,0) -- (-0.5, 0.5);

      \draw [thick, orange] (-1,-1) -- (1,1);
      \draw [orange, thick, knot] (-0.5,-0.5) -- (0.5,0.5);
      \draw [orange, thick, knot] (-1,-1) -- (0,0);
      
      \draw [white, thick, knot] (-0.2, -0.2) -- (0.2, 0.2);
      \draw [orange, thick, knot] (-0.2, -0.2) -- (0.2, -0.2);
      \draw [orange, thick, knot] (-0.2, 0.2) -- (0.2, 0.2);

    \end{scope}
    
    \begin{scope}[yshift=-1.5cm, xshift=0cm]
      \draw[thick, orange] (0.5,-0.5) -- (-1,1);
      \draw [orange, thick, knot]  (1,-1) -- (-0.5,0.5);
      \draw [orange, thick, knot]  (1,-1) -- (0,0);
      \draw [orange, thick, knot] (0,0) -- (-0.5, 0.5);
      
      \draw [thick, orange] (-1,-1) -- (1,1);
      \draw [orange, thick, knot] (-0.5,-0.5) -- (0.5,0.5);
      \draw [orange, thick, knot] (-1,-1) -- (0,0);
      
      \draw [white, thick, knot] (-0.2, -0.2) -- (0.2, 0.2);
      \draw [orange, thick, knot] (-0.2, -0.2) -- (0.2, -0.2);
      \draw [orange, thick, knot] (-0.2, 0.2) -- (0.2, 0.2);
      
      \node (btl) at (-1, 1) {};
      \node (tbl) at (-1, 2) {};
      \node (btr) at (1, 1) {};
      \node (tbr) at (1, 2) {};
      
      \draw [thick, orange] (btl.center) .. controls (btl.4 north west) and (tbl.4 south west) .. (tbl.center) ;

    \end{scope}
    
    \begin{scope}[yshift=0cm, xshift=0cm]
    \node (ttl) at (-1, 2.5) {};
    \node (ttr) at (1, 2.5) {};
    \node (tbl) at (-1, 0.5) {};
    \node (tbr) at (1, 0.5) {};

    \node (btl) at (-1, -0.5) {};
    \node (btr) at (1, -0.5) {};
    \node (bbl) at (-1, -2.5) {};
    \node (bbr) at (1, -2.5) {};
    \node (leftcenter) at (-4.5, -0.75) {};
    \node (rightcenter) at (4.5, -0.75) {};
    
    \draw [thick, orange] (bbl.center) .. controls (bbl.4 north west) and (leftcenter.4 north east) .. (ttl.center);
    
    \draw [thick, orange] (tbr.center) .. controls (tbr.8 north east) and (ttr.8 south east) .. (ttr.center) ;
    
    \draw [orange] (bbr.center) .. controls (bbr.8 north east) and (btr.8 south east) .. (btr.center);
    \end{scope}

    \begin{scope}[xshift=0cm]
    \node (boxtl) at (-3, 3) {};
    \node (boxtr) at (-1, 3) {};
    \node (boxbl) at (-3, -3) {};
    \node (boxbr) at (-1, -3) {};
    
    \draw [thick, green] (boxtl.center) -- (boxtr.center);
    \draw [thick, green] (boxtl.center) -- (boxbl.center);
    \draw [thick, green] (boxbl.center) -- (boxbr.center);
    \draw [thick, green] (boxbr.center) -- (boxtr.center);
    \end{scope}
    
    \begin{scope}[xshift=4cm]
    \node (boxtl) at (-3, 3) {};
    \node (boxtr) at (-1, 3) {};
    \node (boxbl) at (-3, -3) {};
    \node (boxbr) at (-1, -3) {};
    
    \draw [thick, green] (boxtl.center) -- (boxtr.center);
    \draw [thick, green] (boxtl.center) -- (boxbl.center);
    \draw [thick, green] (boxbl.center) -- (boxbr.center);
    \draw [thick, green] (boxbr.center) -- (boxtr.center);
    \end{scope}
    
    
    \begin{scope}[yshift=1.5cm, xshift=8cm]
      \draw[thick, orange] (0.5,-0.5) -- (-1,1);
      \draw [orange, thick, knot]  (1,-1) -- (-0.5,0.5);
      \draw [orange, thick, knot]  (1,-1) -- (0,0);
      \draw [orange, thick, knot] (0,0) -- (-0.5, 0.5);
      
      \draw [thick, orange] (-1,-1) -- (1,1);
      \draw [orange, thick, knot] (-0.5,-0.5) -- (0.5,0.5);
      \draw [orange, thick, knot] (-1,-1) -- (0,0);
      
       \draw [white, thick, knot] (-0.2, -0.2) -- (0.2, 0.2);
      \draw [orange, thick, knot] (-0.2, -0.2) -- (0.2, -0.2);
      \draw [orange, thick, knot] (-0.2, 0.2) -- (0.2, 0.2);
      
    \end{scope}
    
    \begin{scope}[yshift=-1.5cm, xshift=8cm]
      \draw[thick,  orange] (0.5,-0.5) -- (-1,1);
      \draw [orange, thick, knot]  (1,-1) -- (-0.5,0.5);
      \draw [orange, thick, knot]  (1,-1) -- (0,0);
      \draw [orange, thick, knot] (0,0) -- (-0.5, 0.5);
      
      \draw [thick, orange] (-1,-1) -- (1,1);
      \draw [orange, thick, knot] (-0.5,-0.5) -- (0.5,0.5);
      \draw [orange, thick, knot] (-1,-1) -- (0,0);
      
       \draw [white, thick, knot] (-0.2, -0.2) -- (0.2, 0.2);
      \draw [orange, thick, knot] (-0.2, -0.2) -- (0.2, -0.2);
      \draw [orange, thick, knot] (-0.2, 0.2) -- (0.2, 0.2);
      
      \node (btl) at (-1, 1) {};
      \node (tbl) at (-1, 2) {};
      \node (btr) at (1, 1) {};
      \node (tbr) at (1, 2) {};

       \draw [thick, orange] (btr.center) .. controls (btr.4 north east) and (tbr.4 south east) .. (tbr.center) ;

    \end{scope}
    
    \begin{scope}[yshift=0cm, xshift=8cm]
    \node (ttl) at (-1, 2.5) {};
    \node (ttr) at (1, 2.5) {};
    \node (tbl) at (-1, 0.5) {};
    \node (tbr) at (1, 0.5) {};
    
    \node (btl) at (-1, -0.5) {};
    \node (btr) at (1, -0.5) {};
    \node (bbl) at (-1, -2.5) {};
    \node (bbr) at (1, -2.5) {};
    \node (leftcenter) at (-4.5, -0.75) {};
    \node (rightcenter) at (4.5, -0.75) {};

    \draw [orange] (tbl.center) .. controls (tbl.8 north west) and (ttl.8 south west) .. (ttl.center);
    
    \draw [orange] (bbl.center) .. controls (bbl.8 north west) and (btl.8 south west) .. (btl.center);
    
    \draw [orange] (bbr.center) .. controls (bbr.4 north east) and (rightcenter.4 north west) .. (ttr.center);
    \end{scope}

    \begin{scope}[xshift=8cm]
    \node (boxtl) at (-3, 3) {};
    \node (boxtr) at (-1, 3) {};
    \node (boxbl) at (-3, -3) {};
    \node (boxbr) at (-1, -3) {};
    
    \draw [thick, green] (boxtl.center) -- (boxtr.center);
    \draw [thick, green] (boxtl.center) -- (boxbl.center);
    \draw [thick, green] (boxbl.center) -- (boxbr.center);
    \draw [thick, green] (boxbr.center) -- (boxtr.center);
    \end{scope}
    
    \begin{scope}[xshift=12cm]
    \node (boxtl) at (-3, 3) {};
    \node (boxtr) at (-1, 3) {};
    \node (boxbl) at (-3, -3) {};
    \node (boxbr) at (-1, -3) {};
    
    \draw [thick, green] (boxtl.center) -- (boxtr.center);
    \draw [thick, green] (boxtl.center) -- (boxbl.center);
    \draw [thick, green] (boxbl.center) -- (boxbr.center);
    \draw [thick, green] (boxbr.center) -- (boxtr.center);
    \end{scope}
    
    
    \begin{scope}[yshift=1.5cm, xshift=16cm]
      \draw[thick, orange] (0.5,-0.5) -- (-1,1);
      \draw [orange, thick, knot]  (1,-1) -- (-0.5,0.5);
      \draw [orange, thick, knot]  (1,-1) -- (0,0);
      \draw [orange, thick, knot] (0,0) -- (-0.5, 0.5);
      
      \draw [thick,  orange] (-1,-1) -- (1,1);
      \draw [orange, thick, knot] (-0.5,-0.5) -- (0.5,0.5);
      \draw [orange, thick, knot] (-1,-1) -- (0,0);
      
       \draw [white, thick, knot] (-0.2, -0.2) -- (0.2, 0.2);
      \draw [orange, thick, knot] (-0.2, -0.2) -- (0.2, -0.2);
      \draw [orange, thick, knot] (-0.2, 0.2) -- (0.2, 0.2);
      
    \end{scope}
    
    \begin{scope}[yshift=-1.5cm, xshift=16cm]
      \draw[thick, cyan] (0.5,-0.5) -- (-1,1);
      \draw [cyan, thick, knot]  (1,-1) -- (-0.5,0.5);
      \draw [cyan, thick, knot]  (1,-1) -- (0,0);
      \draw [cyan, thick, knot] (0,0) -- (-0.5, 0.5);
      
      \draw [thick, cyan] (-1,-1) -- (1,1);
      \draw [cyan, thick, knot] (-0.5,-0.5) -- (0.5,0.5);
      \draw [cyan, thick, knot] (-1,-1) -- (0,0);
      
       \draw [white, thick, knot] (-0.2, -0.2) -- (0.2, 0.2);
      \draw [cyan, thick, knot] (-0.2, -0.2) -- (0.2, -0.2);
      \draw [cyan, thick, knot] (-0.2, 0.2) -- (0.2, 0.2);
      
      \node (btl) at (-1, 1) {};
      \node (tbl) at (-1, 2) {};
      \node (btr) at (1, 1) {};
      \node (tbr) at (1, 2) {};

    \end{scope}
    
    \begin{scope}[yshift=0cm, xshift=16cm]
    \node (ttl) at (-1, 2.5) {};
    \node (ttr) at (1, 2.5) {};
    \node (tbl) at (-1, 0.5) {};
    \node (tbr) at (1, 0.5) {};

    \node (btl) at (-1, -0.5) {};
    \node (btr) at (1, -0.5) {};
    \node (bbl) at (-1, -2.5) {};
    \node (bbr) at (1, -2.5) {};
    \node (leftcenter) at (-4.5, -0.75) {};
    \node (rightcenter) at (4.5, -0.75) {};
    
    \draw [thick, cyan] (bbl.center) .. controls (bbl.8 north west) and (btl.8 south west) .. (btl.center) ;
    
     \draw [thick, cyan] (bbr.center) .. controls (bbr.8 north east) and (btr.8 south east) .. (btr.center) ;
     
     \draw [thick, orange] (tbl.center) .. controls (tbl.8 north west) and (ttl.8 south west) .. (ttl.center) ;
    
     \draw [thick, orange] (tbr.center) .. controls (tbr.8 north east) and (ttr.8 south east) .. (ttr.center) ;
    
    \end{scope}

    \begin{scope}[xshift=16cm]
    \node (boxtl) at (-3, 3) {};
    \node (boxtr) at (-1, 3) {};
    \node (boxbl) at (-3, -3) {};
    \node (boxbr) at (-1, -3) {};
    
    \draw [thick, green] (boxtl.center) -- (boxtr.center);
    \draw [thick, green] (boxtl.center) -- (boxbl.center);
    \draw [thick, green] (boxbl.center) -- (boxbr.center);
    \draw [thick, green] (boxbr.center) -- (boxtr.center);
    \end{scope}
    
    \begin{scope}[xshift=20cm]
    \node (boxtl) at (-3, 3) {};
    \node (boxtr) at (-1, 3) {};
    \node (boxbl) at (-3, -3) {};
    \node (boxbr) at (-1, -3) {};
    
    \draw [thick, green] (boxtl.center) -- (boxtr.center);
    \draw [thick, green] (boxtl.center) -- (boxbl.center);
    \draw [thick, green] (boxbl.center) -- (boxbr.center);
    \draw [thick, green] (boxbr.center) -- (boxtr.center);
    \end{scope}
    
    
    \begin{scope}[yshift=1.5cm, xshift=24cm]
      \draw[thick,  orange] (0.5,-0.5) -- (-1,1);
      \draw [cyan, thick, knot]  (1,-1) -- (-0.5,0.5);
      \draw [cyan, thick, knot]  (1,-1) -- (0,0);
      \draw [orange, thick, knot] (0,0) -- (-0.5, 0.5);
      
      \draw [thick,  orange] (-1,-1) -- (1,1);
      \draw [orange, thick, knot] (-0.5,-0.5) -- (0.5,0.5);
      \draw [cyan, thick, knot] (-1,-1) -- (0,0);
      
       \draw [white, thick, knot] (-0.2, -0.2) -- (0.2, 0.2);
      \draw [cyan, thick, knot] (-0.2, -0.2) -- (0.2, -0.2);
      \draw [orange, thick, knot] (-0.2, 0.2) -- (0.2, 0.2);
      
    \end{scope}
    
    \begin{scope}[yshift=-1.5cm, xshift=24cm]
      \draw[thick,  cyan] (0.5,-0.5) -- (-1,1);
      \draw [orange, thick, knot]  (1,-1) -- (-0.5,0.5);
      \draw [orange, thick, knot]  (1,-1) -- (0,0);
      \draw [cyan, thick, knot] (0,0) -- (-0.5, 0.5);
      
      \draw [thick, cyan] (-1,-1) -- (1,1);
      \draw [cyan, thick, knot] (-0.5,-0.5) -- (0.5,0.5);
      \draw [orange, thick, knot] (-1,-1) -- (0,0);
      
       \draw [white, thick, knot] (-0.2, -0.2) -- (0.2, 0.2);
      \draw [orange, thick, knot] (-0.2, -0.2) -- (0.2, -0.2);
      \draw [cyan, thick, knot] (-0.2, 0.2) -- (0.2, 0.2);
      
      \node (btl) at (-1, 1) {};
      \node (tbl) at (-1, 2) {};
      \node (btr) at (1, 1) {};
      \node (tbr) at (1, 2) {};
      
      \draw [thick, cyan] (btl.center) .. controls (btl.4 north west) and (tbl.4 south west) .. (tbl.center) ;
      
       \draw [thick, cyan] (btr.center) .. controls (btr.4 north east) and (tbr.4 south east) .. (tbr.center) ;
      
    \end{scope}
    
    \begin{scope}[yshift=0cm, xshift=24cm]
    \node (ttl) at (-1, 2.5) {};
    \node (ttr) at (1, 2.5) {};
    \node (tbl) at (-1, 0.5) {};
    \node (tbr) at (1, 0.5) {};
    
    \node (btl) at (-1, -0.5) {};
    \node (btr) at (1, -0.5) {};
    \node (bbl) at (-1, -2.5) {};
    \node (bbr) at (1, -2.5) {};
    \node (leftcenter) at (-4.5, -0.75) {};
    \node (rightcenter) at (4.5, -0.75) {};
    
    \draw [orange] (bbl.center) .. controls (bbl.4 north west) and (leftcenter.4 north east) .. (ttl.center);
    
    \draw [orange] (bbr.center) .. controls (bbr.4 north east) and (rightcenter.4 north west) .. (ttr.center);
    \end{scope}

    \begin{scope}[xshift=24cm]
    \node (boxtl) at (-3, 3) {};
    \node (boxtr) at (-1, 3) {};
    \node (boxbl) at (-3, -3) {};
    \node (boxbr) at (-1, -3) {};
    
    \draw [thick, green] (boxtl.center) -- (boxtr.center);
    \draw [thick, green] (boxtl.center) -- (boxbl.center);
    \draw [thick, green] (boxbl.center) -- (boxbr.center);
    \draw [thick, green] (boxbr.center) -- (boxtr.center);
    \end{scope}
    
    \begin{scope}[xshift=28cm]
    \node (boxtl) at (-3, 3) {};
    \node (boxtr) at (-1, 3) {};
    \node (boxbl) at (-3, -3) {};
    \node (boxbr) at (-1, -3) {};
    
    \draw [thick, green] (boxtl.center) -- (boxtr.center);
    \draw [thick, green] (boxtl.center) -- (boxbl.center);
    \draw [thick, green] (boxbl.center) -- (boxbr.center);
    \draw [thick, green] (boxbr.center) -- (boxtr.center);
    \end{scope}
    \end{tikzpicture}

    \caption{ $B$-circle possibilities for a Matching Pair}
    \label{B-poss}
\end{figure}
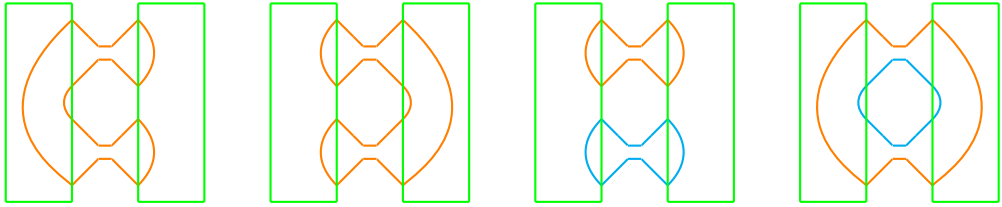


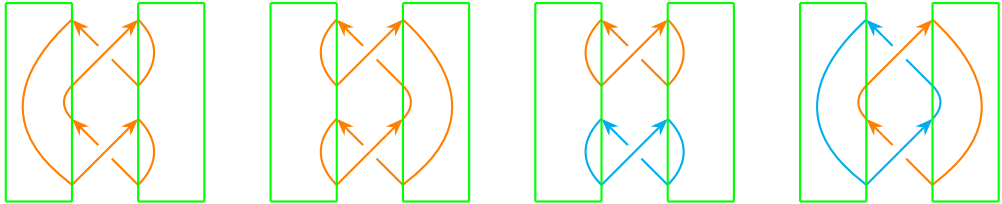
\begin{figure}[h]
    \centering
    
    \begin{tikzpicture}
    [every path/.style={thick}, every
node/.style={transform shape, knot crossing, inner sep=2pt}, knot gap = 9, scale = .44]
    
\begin{scope}[yshift=1.5cm, xshift=0cm]
      \draw[thick, -{Stealth}, orange] (0.5,-0.5) -- (-1,1);
      \draw [orange, thick, knot]  (1,-1) -- (-0.5,0.5);
      \draw [orange, thick, knot]  (1,-1) -- (0,0);
      \draw [orange, thick, knot] (0,0) -- (-0.5, 0.5);

      \draw [thick, -{Stealth}, orange] (-1,-1) -- (1,1);
      \draw [orange, thick, knot] (-0.5,-0.5) -- (0.5,0.5);
      \draw [orange, thick, knot] (-1,-1) -- (0,0);

    \end{scope}
    
    \begin{scope}[yshift=-1.5cm, xshift=0cm]
      \draw[thick, -{Stealth}, orange] (0.5,-0.5) -- (-1,1);
      \draw [orange, thick, knot]  (1,-1) -- (-0.5,0.5);
      \draw [orange, thick, knot]  (1,-1) -- (0,0);
      \draw [orange, thick, knot] (0,0) -- (-0.5, 0.5);
      
      \draw [thick, -{Stealth}, orange] (-1,-1) -- (1,1);
      \draw [orange, thick, knot] (-0.5,-0.5) -- (0.5,0.5);
      \draw [orange, thick, knot] (-1,-1) -- (0,0);
      
      \node (btl) at (-1, 1) {};
      \node (tbl) at (-1, 2) {};
      \node (btr) at (1, 1) {};
      \node (tbr) at (1, 2) {};
      
      \draw [thick, orange] (btl.center) .. controls (btl.4 north west) and (tbl.4 south west) .. (tbl.center) ;
      
      
    \end{scope}
    
    \begin{scope}[yshift=0cm, xshift=0cm]
    \node (ttl) at (-1, 2.5) {};
    \node (ttr) at (1, 2.5) {};
    \node (tbl) at (-1, 0.5) {};
    \node (tbr) at (1, 0.5) {};

    \node (btl) at (-1, -0.5) {};
    \node (btr) at (1, -0.5) {};
    \node (bbl) at (-1, -2.5) {};
    \node (bbr) at (1, -2.5) {};
    \node (leftcenter) at (-4.5, -0.75) {};
    \node (rightcenter) at (4.5, -0.75) {};
    
    \draw [thick, orange] (bbl.center) .. controls (bbl.4 north west) and (leftcenter.4 north east) .. (ttl.center);
    
    \draw [thick, orange] (tbr.center) .. controls (tbr.8 north east) and (ttr.8 south east) .. (ttr.center) ;
    
    \draw [orange] (bbr.center) .. controls (bbr.8 north east) and (btr.8 south east) .. (btr.center);
    \end{scope}

    \begin{scope}[xshift=0cm]
    \node (boxtl) at (-3, 3) {};
    \node (boxtr) at (-1, 3) {};
    \node (boxbl) at (-3, -3) {};
    \node (boxbr) at (-1, -3) {};
    
    \draw [thick, green] (boxtl.center) -- (boxtr.center);
    \draw [thick, green] (boxtl.center) -- (boxbl.center);
    \draw [thick, green] (boxbl.center) -- (boxbr.center);
    \draw [thick, green] (boxbr.center) -- (boxtr.center);
    \end{scope}
    
    \begin{scope}[xshift=4cm]
    \node (boxtl) at (-3, 3) {};
    \node (boxtr) at (-1, 3) {};
    \node (boxbl) at (-3, -3) {};
    \node (boxbr) at (-1, -3) {};
    
    \draw [thick, green] (boxtl.center) -- (boxtr.center);
    \draw [thick, green] (boxtl.center) -- (boxbl.center);
    \draw [thick, green] (boxbl.center) -- (boxbr.center);
    \draw [thick, green] (boxbr.center) -- (boxtr.center);
    \end{scope}
    
    
    \begin{scope}[yshift=1.5cm, xshift=8cm]
      \draw[thick, -{Stealth}, orange] (0.5,-0.5) -- (-1,1);
      \draw [orange, thick, knot]  (1,-1) -- (-0.5,0.5);
      \draw [orange, thick, knot]  (1,-1) -- (0,0);
      \draw [orange, thick, knot] (0,0) -- (-0.5, 0.5);
      
      \draw [thick, -{Stealth}, orange] (-1,-1) -- (1,1);
      \draw [orange, thick, knot] (-0.5,-0.5) -- (0.5,0.5);
      \draw [orange, thick, knot] (-1,-1) -- (0,0);
      
    \end{scope}
    
    \begin{scope}[yshift=-1.5cm, xshift=8cm]
      \draw[thick, -{Stealth}, orange] (0.5,-0.5) -- (-1,1);
      \draw [orange, thick, knot]  (1,-1) -- (-0.5,0.5);
      \draw [orange, thick, knot]  (1,-1) -- (0,0);
      \draw [orange, thick, knot] (0,0) -- (-0.5, 0.5);
      
      \draw [thick, -{Stealth}, orange] (-1,-1) -- (1,1);
      \draw [orange, thick, knot] (-0.5,-0.5) -- (0.5,0.5);
      \draw [orange, thick, knot] (-1,-1) -- (0,0);
      
      \node (btl) at (-1, 1) {};
      \node (tbl) at (-1, 2) {};
      \node (btr) at (1, 1) {};
      \node (tbr) at (1, 2) {};

       \draw [thick, orange] (btr.center) .. controls (btr.4 north east) and (tbr.4 south east) .. (tbr.center) ;

    \end{scope}
    
    \begin{scope}[yshift=0cm, xshift=8cm]
    \node (ttl) at (-1, 2.5) {};
    \node (ttr) at (1, 2.5) {};
    \node (tbl) at (-1, 0.5) {};
    \node (tbr) at (1, 0.5) {};

    \node (btl) at (-1, -0.5) {};
    \node (btr) at (1, -0.5) {};
    \node (bbl) at (-1, -2.5) {};
    \node (bbr) at (1, -2.5) {};
    \node (leftcenter) at (-4.5, -0.75) {};
    \node (rightcenter) at (4.5, -0.75) {};
    
    \draw [orange] (tbl.center) .. controls (tbl.8 north west) and (ttl.8 south west) .. (ttl.center);
    
    \draw [orange] (bbl.center) .. controls (bbl.8 north west) and (btl.8 south west) .. (btl.center);
    
    \draw [orange] (bbr.center) .. controls (bbr.4 north east) and (rightcenter.4 north west) .. (ttr.center);
    \end{scope}

    \begin{scope}[xshift=8cm]
    \node (boxtl) at (-3, 3) {};
    \node (boxtr) at (-1, 3) {};
    \node (boxbl) at (-3, -3) {};
    \node (boxbr) at (-1, -3) {};
    
    \draw [thick, green] (boxtl.center) -- (boxtr.center);
    \draw [thick, green] (boxtl.center) -- (boxbl.center);
    \draw [thick, green] (boxbl.center) -- (boxbr.center);
    \draw [thick, green] (boxbr.center) -- (boxtr.center);
    \end{scope}
    
    \begin{scope}[xshift=12cm]
    \node (boxtl) at (-3, 3) {};
    \node (boxtr) at (-1, 3) {};
    \node (boxbl) at (-3, -3) {};
    \node (boxbr) at (-1, -3) {};
    
    \draw [thick, green] (boxtl.center) -- (boxtr.center);
    \draw [thick, green] (boxtl.center) -- (boxbl.center);
    \draw [thick, green] (boxbl.center) -- (boxbr.center);
    \draw [thick, green] (boxbr.center) -- (boxtr.center);
    \end{scope}
    
    
    \begin{scope}[yshift=1.5cm, xshift=16cm]
      \draw[thick, -{Stealth}, orange] (0.5,-0.5) -- (-1,1);
      \draw [orange, thick, knot]  (1,-1) -- (-0.5,0.5);
      \draw [orange, thick, knot]  (1,-1) -- (0,0);
      \draw [orange, thick, knot] (0,0) -- (-0.5, 0.5);
      
      \draw [thick, -{Stealth}, orange] (-1,-1) -- (1,1);
      \draw [orange, thick, knot] (-0.5,-0.5) -- (0.5,0.5);
      \draw [orange, thick, knot] (-1,-1) -- (0,0);

    \end{scope}
    
    \begin{scope}[yshift=-1.5cm, xshift=16cm]
      \draw[thick, -{Stealth}, cyan] (0.5,-0.5) -- (-1,1);
      \draw [cyan, thick, knot]  (1,-1) -- (-0.5,0.5);
      \draw [cyan, thick, knot]  (1,-1) -- (0,0);
      \draw [cyan, thick, knot] (0,0) -- (-0.5, 0.5);
      
      \draw [thick, -{Stealth}, cyan] (-1,-1) -- (1,1);
      \draw [cyan, thick, knot] (-0.5,-0.5) -- (0.5,0.5);
      \draw [cyan, thick, knot] (-1,-1) -- (0,0);

      \node (btl) at (-1, 1) {};
      \node (tbl) at (-1, 2) {};
      \node (btr) at (1, 1) {};
      \node (tbr) at (1, 2) {};

    \end{scope}
    
    \begin{scope}[yshift=0cm, xshift=16cm]
    \node (ttl) at (-1, 2.5) {};
    \node (ttr) at (1, 2.5) {};
    \node (tbl) at (-1, 0.5) {};
    \node (tbr) at (1, 0.5) {};
    
    \node (btl) at (-1, -0.5) {};
    \node (btr) at (1, -0.5) {};
    \node (bbl) at (-1, -2.5) {};
    \node (bbr) at (1, -2.5) {};
    \node (leftcenter) at (-4.5, -0.75) {};
    \node (rightcenter) at (4.5, -0.75) {};
    
    \draw [thick, cyan] (bbl.center) .. controls (bbl.8 north west) and (btl.8 south west) .. (btl.center) ;
    
     \draw [thick, cyan] (bbr.center) .. controls (bbr.8 north east) and (btr.8 south east) .. (btr.center) ;
     
     \draw [thick, orange] (tbl.center) .. controls (tbl.8 north west) and (ttl.8 south west) .. (ttl.center) ;
    
     \draw [thick, orange] (tbr.center) .. controls (tbr.8 north east) and (ttr.8 south east) .. (ttr.center) ;
    
    \end{scope}

    \begin{scope}[xshift=16cm]
    \node (boxtl) at (-3, 3) {};
    \node (boxtr) at (-1, 3) {};
    \node (boxbl) at (-3, -3) {};
    \node (boxbr) at (-1, -3) {};
    
    \draw [thick, green] (boxtl.center) -- (boxtr.center);
    \draw [thick, green] (boxtl.center) -- (boxbl.center);
    \draw [thick, green] (boxbl.center) -- (boxbr.center);
    \draw [thick, green] (boxbr.center) -- (boxtr.center);
    \end{scope}
    
    \begin{scope}[xshift=20cm]
    \node (boxtl) at (-3, 3) {};
    \node (boxtr) at (-1, 3) {};
    \node (boxbl) at (-3, -3) {};
    \node (boxbr) at (-1, -3) {};
    
    \draw [thick, green] (boxtl.center) -- (boxtr.center);
    \draw [thick, green] (boxtl.center) -- (boxbl.center);
    \draw [thick, green] (boxbl.center) -- (boxbr.center);
    \draw [thick, green] (boxbr.center) -- (boxtr.center);
    \end{scope}
    
    
    \begin{scope}[yshift=1.5cm, xshift=24cm]
      \draw[thick, -{Stealth}, cyan] (0.5,-0.5) -- (-1,1);
      \draw [orange, thick, knot]  (1,-1) -- (-0.5,0.5);
      \draw [cyan, thick, knot]  (1,-1) -- (0,0);
      \draw [cyan, thick, knot] (0,0) -- (-0.5, 0.5);
      
      \draw [thick, -{Stealth}, orange] (-1,-1) -- (1,1);
      \draw [orange, thick, knot] (-0.5,-0.5) -- (0.5,0.5);
      \draw [orange, thick, knot] (-1,-1) -- (0,0);

    \end{scope}
    
    \begin{scope}[yshift=-1.5cm, xshift=24cm]
      \draw[thick, -{Stealth}, orange] (0.5,-0.5) -- (-1,1);
      \draw [cyan, thick, knot]  (1,-1) -- (-0.5,0.5);
      \draw [orange, thick, knot]  (1,-1) -- (0,0);
      \draw [orange, thick, knot] (0,0) -- (-0.5, 0.5);
      
      \draw [thick, -{Stealth}, cyan] (-1,-1) -- (1,1);
      \draw [cyan, thick, knot] (-0.5,-0.5) -- (0.5,0.5);
      \draw [cyan, thick, knot] (-1,-1) -- (0,0);

      \node (btl) at (-1, 1) {};
      \node (tbl) at (-1, 2) {};
      \node (btr) at (1, 1) {};
      \node (tbr) at (1, 2) {};
      
      \draw [thick, orange] (btl.center) .. controls (btl.4 north west) and (tbl.4 south west) .. (tbl.center) ;
      
       \draw [thick, cyan] (btr.center) .. controls (btr.4 north east) and (tbr.4 south east) .. (tbr.center) ;
      
    \end{scope}
    
    \begin{scope}[yshift=0cm, xshift=24cm]
    \node (ttl) at (-1, 2.5) {};
    \node (ttr) at (1, 2.5) {};
    \node (tbl) at (-1, 0.5) {};
    \node (tbr) at (1, 0.5) {};
    
    \node (btl) at (-1, -0.5) {};
    \node (btr) at (1, -0.5) {};
    \node (bbl) at (-1, -2.5) {};
    \node (bbr) at (1, -2.5) {};
    \node (leftcenter) at (-4.5, -0.75) {};
    \node (rightcenter) at (4.5, -0.75) {};

    \draw [cyan] (bbl.center) .. controls (bbl.4 north west) and (leftcenter.4 north east) .. (ttl.center);
    
    \draw [orange] (bbr.center) .. controls (bbr.4 north east) and (rightcenter.4 north west) .. (ttr.center);
    \end{scope}

    \begin{scope}[xshift=24cm]
    \node (boxtl) at (-3, 3) {};
    \node (boxtr) at (-1, 3) {};
    \node (boxbl) at (-3, -3) {};
    \node (boxbr) at (-1, -3) {};
    
    \draw [thick, green] (boxtl.center) -- (boxtr.center);
    \draw [thick, green] (boxtl.center) -- (boxbl.center);
    \draw [thick, green] (boxbl.center) -- (boxbr.center);
    \draw [thick, green] (boxbr.center) -- (boxtr.center);
    \end{scope}
    
    \begin{scope}[xshift=28cm]
    \node (boxtl) at (-3, 3) {};
    \node (boxtr) at (-1, 3) {};
    \node (boxbl) at (-3, -3) {};
    \node (boxbr) at (-1, -3) {};
    
    \draw [thick, green] (boxtl.center) -- (boxtr.center);
    \draw [thick, green] (boxtl.center) -- (boxbl.center);
    \draw [thick, green] (boxbl.center) -- (boxbr.center);
    \draw [thick, green] (boxbr.center) -- (boxtr.center);
    \end{scope}
    
    \end{tikzpicture}

     \caption{ Component possibilities for a Matching Pair}
    \label{component poss}
\end{figure}

\begin{definition}
We say that a matching pair is  \textbf{synchronized} if arcs of the matching pair that belong to the same $A$-circle are also part of the same $B$-circle if and only if they are part of the same component.
\end{definition}

 (So, for example, if a matching pair has the third possibility for $B$-circles shown in Figure \ref{B-poss}, then it is a synchronized matching pair if and only if it has the third possibility for components shown in Figure \ref{component poss}.)

In Figure \ref{A-circle, B-circle, component}, we see an example of a Balanced diagram in which \textit{every} matching pair is synchronized. In this example, we can observe that for any arcs $x$ and $y$ that are part of the blue $A$-circle, those two arcs are part of the same $B$-circle if and only if they are part of the same component. It turns out that this will always be the case when we have a Balanced diagram in which every matching pair is synchronized. While reading the proof of this, it may be helpful to look back at the diagram in Figure \ref{A-circle, B-circle, component}. 

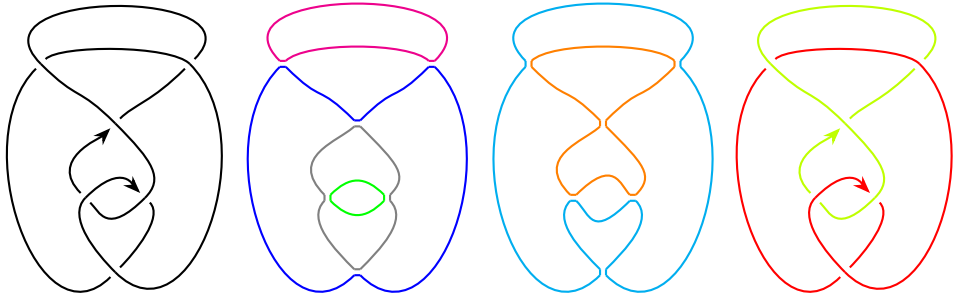
\begin{figure}
    \centering
    \begin{tikzpicture}[every path/.style={thick}, every
node/.style={transform shape, knot crossing, inner sep=2pt}, scale=0.99]
    \begin{scope}[xshift=0cm, scale=0.8]
    \node (tlo) at (-6.25,2.25) {};
    \node (tro) at (-3.75,2.25) {};
    \node (so) at (-5,1.25) {};
    \node (mlo) at (-5.5,0) {};
    \node (mro) at (-4.5, 0) {};
    \node (bo) at (-5,-1.25) {};
    \draw (bo.center) .. controls (bo.4 north west) and (mlo.4 south west) ..
    (mlo.center);
    \draw  (bo) .. controls (bo.4 north east) and (mro.4 south east).. (mro);
    \draw  [-{Stealth}] (mlo) .. controls (mlo.8 north west) and (so.3 south west) ..
    (so);
    \draw [-{Stealth}] (mlo.center) .. controls (mlo.8 north east) and (mro.2 north
    west) .. (mro);
    \draw  (mro.center) .. controls (mro.4 north east) and (so.8 south east) ..
    (so.center);
    \draw (mro.center) .. controls (mro.8 south west) and (mlo.3 south east) ..
    (mlo);
    \draw (so) .. controls (so.4 north east) and (tro.8 south west) .. (tro);
    \draw  (so.center) .. controls (so.8 north west) and (tlo.8 south
    east) .. (tlo.center);
    \draw (tlo.center) .. controls (tlo.16 north west) and (tro.16 north
    east) .. (tro);
    \draw (bo.center) .. controls (bo.16 south east) and (tro.16 south east) ..
    (tro.center);
    \draw (bo) .. controls (bo.16 south west) and (tlo.16 south
    west) .. (tlo);
    \draw (tlo) .. controls (tlo.4 north east) and (tro.4 north
    west) .. (tro.center);
    \end{scope}
    
    \begin{scope}[xshift=-0.75cm, scale=0.8]
    \node (tl) at (-1.25,2.25) {};
    \node (tltl) at (-1.3,2.3) {};
    \node (tlbl) at (-1.3,2.2) {};
    \node (tltr) at (-1.2,2.3) {};
    \node (tlbr) at (-1.2,2.2) {};
    \node (tr) at (1.25,2.25) {};
    \node (trtl) at (1.2,2.3) {};
    \node (trbl) at (1.2,2.2) {};
    \node (trtr) at (1.3,2.3) {};
    \node (trbr) at (1.3,2.2) {};
    \node (stl) at (-0.05,1.3) {};
    \node (str) at (0.05,1.3) {};
    \node (sbl) at (-0.05,1.2) {};
    \node (sbr) at (0.05,1.2) {};
    \node (ml) at (-0.5,0) {};
    \node (mltl) at (-0.55,0.05) {};
    \node (mlbl) at (-0.55,-0.05) {};
    \node (mltr) at (-0.45,0.05) {};
    \node (mlbr) at (-0.45,-0.05) {};
    \node (mr) at (0.5, 0) {};
    \node (mrtl) at (0.45, 0.05) {};
    \node (mrbl) at (0.45, -0.05) {};
    \node (mrtr) at (0.55, 0.05) {};
    \node (mrbr) at (0.55, -0.05) {};
    \node (b) at (0,-1.25) {};
    \node (btl) at (-0.05, -1.2) {};
    \node (bbl) at (-0.05, -1.3) {};
    \node (btr) at (0.05, -1.2) {};
    \node (bbr) at (0.05, -1.3) {};
    
    \draw [gray] (btl.center) .. controls (btl.4 north west) and (mlbl.4 south west) ..
    (mlbl.center);
    \draw [gray] (btr.center) .. controls (btr.4 north east) and (mrbr.4 south east)
    .. (mrbr.center);
    \draw [gray] (mltl.center) .. controls (mltl.8 north west) and (sbl.3 south west) ..
    (sbl.center);
    \draw [green] (mltr.center) .. controls (mltr.4 north east) and (mrtl.4 north
    west) .. (mrtl.center);
    \draw [gray] (mrtr.center) .. controls (mrtr.4 north east) and (sbr.8 south east) ..
    (sbr.center);
    \draw [green] (mrbl.center) .. controls (mrbl.4 south west) and (mlbr.4 south east) ..
    (mlbr.center);
    \draw [blue] (str.center) .. controls (str.8 north east) and (trbl.8 south west) .. (trbl.center);
    \draw [blue] (stl.center) .. controls (stl.8 north west) and (tlbr.8 south
    east) .. (tlbr.center);
    \draw [magenta] (tltl.center) .. controls (tltl.16 north west) and (trtr.16 north
    east) .. (trtr.center);
    \draw [blue] (bbr.center) .. controls (bbr.16 south east) and (trbr.16 south east) ..
    (trbr.center);
    \draw [blue] (bbl.center) .. controls (bbl.16 south west) and (tlbl.16 south
    west) .. (tlbl.center);
    \draw [magenta] (tltr.center) .. controls (tltr.4 north east) and (trtl.4 north
    west) .. (trtl.center);
    
    \draw [blue] (stl.center) -- (str.center) {};
    \draw [gray] (sbl.center) -- (sbr.center) {};
    \draw [gray] (mltl.center) -- (mlbl.center) {};
    \draw [green] (mltr.center) -- (mlbr.center) {};
    \draw [green] (mrtl.center) -- (mrbl.center) {};
    \draw [gray] (mrtr.center) -- (mrbr.center) {};
    \draw [magenta] (tltl.center) -- (tltr.center) {};
    \draw [blue] (tlbl.center) -- (tlbr.center) {};
    \draw [magenta] (trtl.center) -- (trtr.center) {};
    \draw [blue] (trbl.center) -- (trbr.center) {};
    \draw [gray] (btl.center) -- (btr.center) {};
    \draw [blue] (bbl.center) -- (bbr.center) {};
    \end{scope}
    

   \begin{scope}[xshift=2.55cm, scale=0.8]
    \node (tl) at (-1.25,2.25) {};
    \node (tltl) at (-1.3,2.3) {};
    \node (tlbl) at (-1.3,2.2) {};
    \node (tltr) at (-1.2,2.3) {};
    \node (tlbr) at (-1.2,2.2) {};
    \node (tr) at (1.25,2.25) {};
    \node (trtl) at (1.2,2.3) {};
    \node (trbl) at (1.2,2.2) {};
    \node (trtr) at (1.3,2.3) {};
    \node (trbr) at (1.3,2.2) {};
    \node (stl) at (-0.05,1.3) {};
    \node (str) at (0.05,1.3) {};
    \node (sbl) at (-0.05,1.2) {};
    \node (sbr) at (0.05,1.2) {};
    \node (ml) at (-0.5,0) {};
    \node (mltl) at (-0.55,0.05) {};
    \node (mlbl) at (-0.55,-0.05) {};
    \node (mltr) at (-0.45,0.05) {};
    \node (mlbr) at (-0.45,-0.05) {};
    \node (mr) at (0.5, 0) {};
    \node (mrtl) at (0.45, 0.05) {};
    \node (mrbl) at (0.45, -0.05) {};
    \node (mrtr) at (0.55, 0.05) {};
    \node (mrbr) at (0.55, -0.05) {};
    \node (b) at (0,-1.25) {};
    \node (btl) at (-0.05, -1.2) {};
    \node (bbl) at (-0.05, -1.3) {};
    \node (btr) at (0.05, -1.2) {};
    \node (bbr) at (0.05, -1.3) {};
    
    \draw [cyan] (btl.center) .. controls (btl.4 north west) and (mlbl.4 south west) ..
    (mlbl.center);
    \draw [cyan] (btr.center) .. controls (btr.4 north east) and (mrbr.4 south east)
    .. (mrbr.center);
    \draw [orange] (mltl.center) .. controls (mltl.8 north west) and (sbl.3 south west) ..
    (sbl.center);
    \draw [orange] (mltr.center) .. controls (mltr.8 north east) and (mrtl.2 north
    west) .. (mrtl.center);
    \draw [orange] (mrtr.center) .. controls (mrtr.4 north east) and (sbr.8 south east) ..
    (sbr.center);
    \draw [cyan] (mrbl.center) .. controls (mrbl.8 south west) and (mlbr.3 south east) ..
    (mlbr.center);
    \draw [orange] (str.center) .. controls (str.8 north east) and (trbl.8 south west) .. (trbl.center);
    \draw [orange] (stl.center) .. controls (stl.8 north west) and (tlbr.8 south
    east) .. (tlbr.center);
    \draw [cyan] (tltl.center) .. controls (tltl.16 north west) and (trtr.16 north
    east) .. (trtr.center);
    \draw [cyan] (bbr.center) .. controls (bbr.16 south east) and (trbr.16 south east) ..
    (trbr.center);
    \draw [cyan] (bbl.center) .. controls (bbl.16 south west) and (tlbl.16 south
    west) .. (tlbl.center);
    \draw [orange] (tltr.center) .. controls (tltr.4 north east) and (trtl.4 north
    west) .. (trtl.center);
    
    \draw [orange] (stl.center) -- (sbl.center) {};
    \draw [orange] (str.center) -- (sbr.center) {};
    \draw [orange] (mltl.center) -- (mltr.center) {};
    \draw [cyan] (mlbr.center) -- (mlbl.center) {};
    \draw [orange] (mrtl.center) -- (mrtr.center) {};
    \draw [cyan] (mrbr.center) -- (mrbl.center) {};
    \draw [cyan] (tltl.center) -- (tlbl.center) {};
    \draw [orange] (tlbr.center) -- (tltr.center) {};
    \draw [orange] (trtl.center) -- (trbl.center) {};
    \draw [cyan] (trbr.center) -- (trtr.center) {};
    \draw [cyan] (btl.center) -- (bbl.center) {};
    \draw [cyan] (btr.center) -- (bbr.center) {};
    \end{scope}
    
    \begin{scope}[xshift=9.8cm, scale=0.8]
        \node (tlo) at (-6.25,2.25) {};
        \node (tro) at (-3.75,2.25) {};
        \node (so) at (-5,1.25) {};
        \node (mlo) at (-5.5,0) {};
        \node (mro) at (-4.5, 0) {};
        \node (bo) at (-5,-1.25) {};
        \draw [red] (bo.center) .. controls (bo.4 north west) and (mlo.4 south west) ..
        (mlo.center);
        \draw  [red] (bo) .. controls (bo.4 north east) and (mro.4 south east).. (mro);
        \draw  [lime, -{Stealth}] (mlo) .. controls (mlo.8 north west) and (so.3 south west) ..
        (so);
        \draw [red, -{Stealth}] (mlo.center) .. controls (mlo.8 north east) and (mro.2 north
        west) .. (mro);
        \draw [lime] (mro.center) .. controls (mro.4 north east) and (so.8 south east) ..
        (so.center);
        \draw [lime] (mro.center) .. controls (mro.8 south west) and (mlo.3 south east) ..
        (mlo);
        \draw [lime] (so) .. controls (so.4 north east) and (tro.8 south west) .. (tro);
        \draw  [lime] (so.center) .. controls (so.8 north west) and (tlo.8 south
        east) .. (tlo.center);
        \draw [lime] (tlo.center) .. controls (tlo.16 north west) and (tro.16 north
        east) .. (tro);
        \draw [red] (bo.center) .. controls (bo.16 south east) and (tro.16 south east) ..
        (tro.center);
        \draw [red](bo) .. controls (bo.16 south west) and (tlo.16 south
        west) .. (tlo);
        \draw [red] (tlo) .. controls (tlo.4 north east) and (tro.4 north
        west) .. (tro.center);
    \end{scope}
\end{tikzpicture} 
\caption{ (Left to right:) A Balanced diagram, its $A$-circles, its $B$-circles, and its components}
\label{A-circle, B-circle, component}
\end{figure}

\begin{definition}
We say that a Balanced link diagram is \textbf{synchronized} if arcs on the same $A$-circle are also part of the same $B$-circle if and only if they are part of the same component
\end{definition}

\begin{lemma}
A Balanced diagram is synchronized if and only if all of its matching pairs are synchronized.
\label{synchronized means for same A:  same B iff same c}
\end{lemma}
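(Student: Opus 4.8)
The plan is to handle the two implications separately, with the forward one formal and the reverse one carrying the content; throughout I write $x\sim_B y$ to mean that arcs $x,y$ lie on a common $B$-circle and $x\sim_c y$ to mean they lie on a common component. For the forward implication I would simply observe that if $D$ is synchronized then the biconditional ``$x\sim_B y \iff x\sim_c y$'' holds for every pair of arcs $x,y$ on a common $A$-circle; specializing it to the arcs of a fixed matching pair that lie on one of its two $A$-circles (and, separately, to those on the other) is exactly the statement that that matching pair is synchronized. This is a one-line argument.

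For the reverse implication, assume every matching pair is synchronized, fix an $A$-circle $A$, and fix two arcs $x,y\subseteq A$; the goal is $x\sim_B y\iff x\sim_c y$. The local input I would establish first: since the reduced $A$-state graph is a tree it has no loops, so each crossing $c$ is shared by two \emph{distinct} $A$-circles $A,A'$, and (as they share exactly two crossings) $c$ lies in a unique matching pair $M$; inspecting the $A$-, $B$-, and Seifert smoothings at $c$ shows that a $B$-circle and a link component through $c$ must each cross over from an arc of $A$ to an arc of $A'$, so neither can ``remain on $A$'' at a single crossing. Now take any $B$-circle $\beta$ and list, cyclically, the arcs of $A$ that it meets, $b_1,\dots,b_k$. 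Between consecutive $b_i$ and $b_{i+1}$ the circle $\beta$ makes an excursion off $A$, leaving $A$ at a crossing of some matching pair $M$ — the edge of the tree leading from $A$ into the subtree the excursion enters — and, that edge being a cut edge, $\beta$ can only return to $A$ through a crossing of the \emph{same} $M$; hence $b_i$ and $b_{i+1}$ are both arcs of $M$ lying on $A$, and since $M$ is synchronized and $b_i\sim_B b_{i+1}$ we get $b_i\sim_c b_{i+1}$. Iterating around $\beta$ shows all of $b_1,\dots,b_k$ lie on one component, i.e.\ the arcs of $A$ on $\beta$ form a subset of the arcs of $A$ on a single component; the mirror-image argument (same local input, but tracing a component and using the other direction of the synchronization hypothesis) gives the reverse containment. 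Therefore the $B$-circle through $x$ and the component through $x$ meet $A$ in exactly the same arcs, and $x\sim_B y\iff x\sim_c y$ follows.

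I expect the ``return'' claim to be the main point needing care: that an excursion of a $B$-circle or a component off $A$ comes back through a crossing of the matching pair it departed by. I would prove it using the standard fact that the sequence of $A$-circles visited by a $B$-circle (resp.\ by a component) is a closed walk in the reduced $A$-state graph with successive vertices distinct, hence a walk in a tree; between two successive visits to the vertex $A$ the walk stays in one component of the tree minus $A$, whose only vertex adjacent to $A$ is the $A'$ on the far side of $M$, forcing both the departure and return crossings onto the edge $M$. I would also note that the degenerate configurations — the two crossings of $M$ adjacent along $A$ (so $M$ contributes only three arcs on $A$), or a short excursion with $b_i$ and $b_{i+1}$ at the same crossing — need no separate treatment, since the synchronization of $M$ constrains any two of the arcs of $M$ lying on $A$.
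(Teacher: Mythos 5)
Your proof is correct and follows essentially the same route as the paper's: both hinge on the observation that, because the reduced $A$-state graph is a tree, any excursion of a $B$-circle (or component) off a fixed $A$-circle must return through the same matching pair it departed by, after which synchronization of that pair lets one chain the equivalences around the circle, with the two directions of the biconditional obtained by swapping the roles of $B$-circles and components. Your write-up is somewhat more explicit about the local crossing behavior and the cyclic bookkeeping, but the substance is identical.
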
 

\begin{proof}

If a Balanced diagram is synchronized, then automatically all of its matching pairs are synchronized.

Now we would like to show that if all matching pairs are synchronized, then the Balanced diagram must also be synchronized. Let $x$ and $y$ be distinct arcs on $A$-circle $A$ that are part of the same component $C$. We would like to show that $x$ and $y$ must then also be part of the same $B$-circle.

We travel along $C$ from $x$ to get to $y$, and since these arcs are distinct, we must pass through matching pairs on our way. When we pass through a matching pair, we change $A$-circles. 
Since the underlying reduced $A$-state graph is a tree in which each edge uniquely corresponds to a matching pair of crossings in our diagram $D$, we must pass through the same matching pair again in order to return to $A$.
Since every matching pair is synchronized, when we return to $A$ we do so along an arc that is part of the same $B$-circle as $x$, by definition of a synchronized matching pair. Since this is true every time we leave $A$, when we arrive at arc $y$ we see that it also must be part of the same $B$-circle as arc $x$. 

Thus arcs on the same $A$-circle that are part of the same component must also be part of the same $B$-circle. 

By swapping the roles of component and $B$-circle in the paragraph above we get the reverse implication, and so we conclude that arcs on the same $A$-circle are part of the same $B$-circle if and only if they are part of the same component, and thus the diagram is synchronized by definition.
 
\end{proof}

\begin{lemma}\label{Every matching pair is synchronized}
In a Balanced diagram every matching pair is synchronized.
\end{lemma}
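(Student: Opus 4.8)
The plan is to induct on the number of matching pairs of $D$ (equivalently, on $s(D)-1$, equivalently on $\tfrac12 c(D)$). For the base case there is a single matching pair, so $D$ has two Seifert circles meeting in exactly two positive crossings, i.e. $D$ is a positive clasp of two circles (the closure of $\sigma_1^{2}$); direct inspection of the two all‑$B$ circles, together with the two link components, shows that the ``inner'' and ``outer'' arc of each Seifert circle lie on different $B$-circles and on different components, so the (unique) matching pair is synchronized.

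For the inductive step, choose the matching pair $MP$, between $A$-circles $A$ and $A'$, so that the corresponding edge of the tree is a leaf edge, i.e. $A$ meets only $A'$. Smoothing the two crossings of $MP$ in the $A$-direction (which for the positive diagram $D$ is the Seifert direction) disconnects $D$, as already observed, into $D_1\sqcup D_2$ with $A$ lying in $D_1$; since $A$ is a leaf, $D_1$ is just a crossingless circle, and $D_2$ is $D$ with this ``leaf clasp'' deleted. Removing an edge from a tree leaves a tree, and both positivity and the $0$-or-$2$-crossings condition are inherited by the (connected) piece $D_2$, so $D_2$ is again a Balanced diagram, with one fewer matching pair. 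By the inductive hypothesis all of its matching pairs are synchronized, hence by Lemma~\ref{synchronized means for same A:  same B iff same c} $D_2$ is a synchronized diagram.

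It then remains to show (a) that the reattached clasp $MP$ is synchronized in $D$, and (b) that every matching pair inherited from $D_2$ is still synchronized in $D$; together these give that every matching pair of $D$ is synchronized. Part (b) is easy: attaching the leaf loop $A$ changes $D_2$ only inside a disk around $MP$ — it cuts one arc of $A'$ and reroutes it through the clasp — so the relations ``lie on a common $B$-circle'' and ``lie on a common component'' are unchanged for every pair of arcs disjoint from that disk. For part (a) one runs through the classification of the four ways a component can run through a matching pair (Figure~\ref{component poss}) against the four ways a $B$-circle can (Figure~\ref{B-poss}): here positivity is essential, since it forces the $A$-smoothing at each of the two crossings of $MP$ to be the Seifert smoothing, which pins down the over/under data and the orientations along the clasp, and this in turn matches each admissible component-configuration with exactly the corresponding (synchronized) $B$-configuration. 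The ``same circle versus different circle'' half of (a) is handled by the tree structure: a component or $B$-circle that leaves $A$ through $MP$ can return to $A$ only by re-entering through $MP$, so its behavior at $MP$ relative to $A$ is determined by a single such return, and positivity makes the return rule the same for components and for $B$-circles.

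The main obstacle is part (a), and specifically its second half: verifying that, among the $4\times 4$ combinations of a local component-pattern and a local $B$-pattern at $MP$, positivity together with the tree structure permits only the four synchronized ones. This is where the bookkeeping of orientations at two positive crossings, and of how the strands of $A$ and $A'$ close up through the rest of the tree-structured diagram, must be done carefully; the remainder of the induction is routine once this local-to-global step is in place. (Note that positivity cannot be dropped: for a general clasp a matching pair need not be synchronized, so any proof must use that all crossings are positive.)
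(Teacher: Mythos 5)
Your inductive skeleton (smooth a matching pair, invoke the inductive hypothesis and Lemma~\ref{synchronized means for same A:  same B iff same c} on the smaller Balanced pieces, then analyse the recombination at the smoothed pair) is essentially the paper's, but your decision to smooth only a \emph{leaf} pair creates an obligation the paper never incurs, and your discharge of that obligation is wrong. In part (b) you assert that reattaching the leaf clasp leaves the relations ``lie on a common $B$-circle'' and ``lie on a common component'' unchanged for all arcs outside a disk around $MP$. That is false: attaching the leaf along arcs $x,y$ of $A'$ merges the $B$-circles through $x$ and $y$ into one when they are distinct (and likewise merges the corresponding components), or splits one into two when they coincide --- exactly the dichotomy exploited in the proof of Theorem~\ref{comp=Bcircles} --- so both relations do change for arcs arbitrarily far from the disk. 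One can try to repair this using the synchronization of $D_2$ at $x$ and $y$ (both relations merge, or both split, simultaneously), but in the splitting case you would still need that a pair of distant arcs lands on the same piece of the split $B$-circle if and only if it lands on the same piece of the split component, which is essentially the statement being proved. The paper sidesteps all of this: for \emph{each} matching pair it smooths \emph{that} pair, obtains two synchronized subdiagrams, and concludes only that the smoothed pair itself is synchronized; no persistence statement for the remaining pairs is ever needed.

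Separately, part (a) --- the verification that, given the two synchronized pieces, only the four synchronized combinations of a local $B$-pattern and a local component pattern at $MP$ can occur --- is the actual content of the lemma (Figures~\ref{B-possibilities} and~\ref{component possibilities}), and you explicitly leave it undone, calling it ``the main obstacle.'' Your proposed mechanism for it is also off: which of the four $B$-configurations and which of the four component configurations occur at a given matching pair is not pinned down by positivity and local orientation data alone --- all four of each genuinely arise in Balanced diagrams --- rather, the correlation between the two is forced by the synchronization of the two subdiagrams obtained by smoothing, i.e.\ by the inductive hypothesis fed through Lemma~\ref{synchronized means for same A:  same B iff same c}. Until that case analysis is actually written out, the proof is incomplete.
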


\begin{proof}

We proceed by induction on $|A|$, the number of $A$-circles. Our claim clearly holds in the base case $|A|=1$.

Now suppose our claim is true for any diagram with $|A|=k$ for some $k\in \mathbb{N}$. Consider a diagram $D$ for which $|A(D)|=k+1$. Choose any matching pair and smooth it. Because of the underlying tree structure, this disconnects our diagram and leaves us with two smaller diagrams. Call them $D'$ and $D''$. This is pictured in Figures \ref{B-possibilities} and  \ref{component possibilities}.

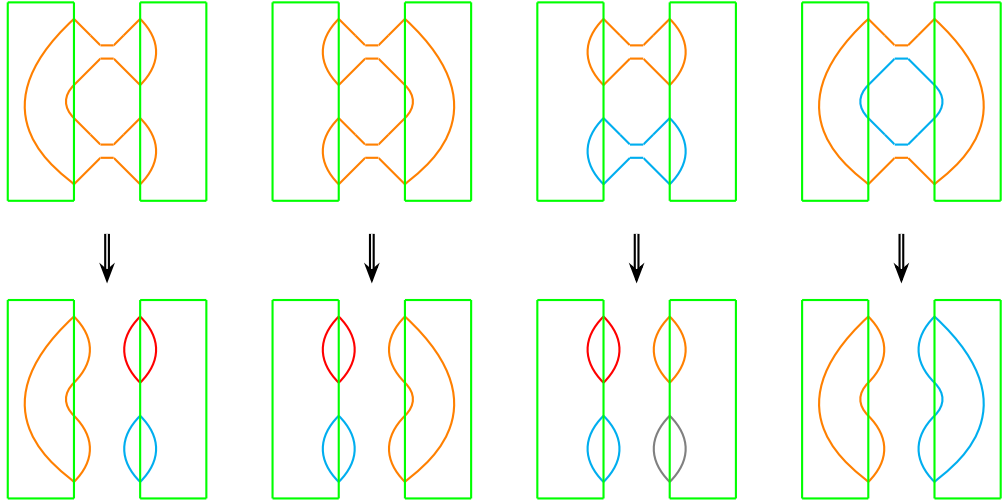
\begin{figure}[t]
    \centering
    
    \begin{tikzpicture}
    [every path/.style={thick}, every
node/.style={transform shape, knot crossing, inner sep=2pt}, knot gap = 9, scale = .44]
    
\begin{scope}[yshift=1.5cm, xshift=0cm]
      \draw[thick, orange] (0.5,-0.5) -- (-1,1);
      \draw [orange, thick, knot]  (1,-1) -- (-0.5,0.5);
      \draw [orange, thick, knot]  (1,-1) -- (0,0);
      \draw [orange, thick, knot] (0,0) -- (-0.5, 0.5);

      \draw [thick, orange] (-1,-1) -- (1,1);
      \draw [orange, thick, knot] (-0.5,-0.5) -- (0.5,0.5);
      \draw [orange, thick, knot] (-1,-1) -- (0,0);
      
      \draw [white, thick, knot] (-0.2, -0.2) -- (0.2, 0.2);
      \draw [orange, thick, knot] (-0.2, -0.2) -- (0.2, -0.2);
      \draw [orange, thick, knot] (-0.2, 0.2) -- (0.2, 0.2);
    \end{scope}
    
    \begin{scope}[yshift=-1.5cm, xshift=0cm]
      \draw[thick, orange] (0.5,-0.5) -- (-1,1);
      \draw [orange, thick, knot]  (1,-1) -- (-0.5,0.5);
      \draw [orange, thick, knot]  (1,-1) -- (0,0);
      \draw [orange, thick, knot] (0,0) -- (-0.5, 0.5);
      
      \draw [thick, orange] (-1,-1) -- (1,1);
      \draw [orange, thick, knot] (-0.5,-0.5) -- (0.5,0.5);
      \draw [orange, thick, knot] (-1,-1) -- (0,0);
      
      \draw [white, thick, knot] (-0.2, -0.2) -- (0.2, 0.2);
      \draw [orange, thick, knot] (-0.2, -0.2) -- (0.2, -0.2);
      \draw [orange, thick, knot] (-0.2, 0.2) -- (0.2, 0.2);
      
      \node (btl) at (-1, 1) {};
      \node (tbl) at (-1, 2) {};
      \node (btr) at (1, 1) {};
      \node (tbr) at (1, 2) {};
      
      \draw [thick, orange] (btl.center) .. controls (btl.4 north west) and (tbl.4 south west) .. (tbl.center) ;
      
      
    \end{scope}
    
    \begin{scope}[yshift=0cm, xshift=0cm]
    \node (ttl) at (-1, 2.5) {};
    \node (ttr) at (1, 2.5) {};
    \node (tbl) at (-1, 0.5) {};
    \node (tbr) at (1, 0.5) {};

    \node (btl) at (-1, -0.5) {};
    \node (btr) at (1, -0.5) {};
    \node (bbl) at (-1, -2.5) {};
    \node (bbr) at (1, -2.5) {};
    \node (leftcenter) at (-4.5, -0.75) {};
    \node (rightcenter) at (4.5, -0.75) {};

    \draw [thick, orange] (bbl.center) .. controls (bbl.4 north west) and (leftcenter.4 north east) .. (ttl.center);
    
    \draw [thick, orange] (tbr.center) .. controls (tbr.8 north east) and (ttr.8 south east) .. (ttr.center) ;
    
    \draw [orange] (bbr.center) .. controls (bbr.8 north east) and (btr.8 south east) .. (btr.center);
    \end{scope}

    \begin{scope}[xshift=0cm]
    \node (boxtl) at (-3, 3) {};
    \node (boxtr) at (-1, 3) {};
    \node (boxbl) at (-3, -3) {};
    \node (boxbr) at (-1, -3) {};
    
    \draw [thick, green] (boxtl.center) -- (boxtr.center);
    \draw [thick, green] (boxtl.center) -- (boxbl.center);
    \draw [thick, green] (boxbl.center) -- (boxbr.center);
    \draw [thick, green] (boxbr.center) -- (boxtr.center);
    \end{scope}
    
    \begin{scope}[xshift=4cm]
    \node (boxtl) at (-3, 3) {};
    \node (boxtr) at (-1, 3) {};
    \node (boxbl) at (-3, -3) {};
    \node (boxbr) at (-1, -3) {};
    
    \draw [thick, green] (boxtl.center) -- (boxtr.center);
    \draw [thick, green] (boxtl.center) -- (boxbl.center);
    \draw [thick, green] (boxbl.center) -- (boxbr.center);
    \draw [thick, green] (boxbr.center) -- (boxtr.center);
    \end{scope}
    
    
    \begin{scope}[yshift=1.5cm, xshift=8cm]
      \draw[thick, orange] (0.5,-0.5) -- (-1,1);
      \draw [orange, thick, knot]  (1,-1) -- (-0.5,0.5);
      \draw [orange, thick, knot]  (1,-1) -- (0,0);
      \draw [orange, thick, knot] (0,0) -- (-0.5, 0.5);
      
      \draw [thick, orange] (-1,-1) -- (1,1);
      \draw [orange, thick, knot] (-0.5,-0.5) -- (0.5,0.5);
      \draw [orange, thick, knot] (-1,-1) -- (0,0);
      
       \draw [white, thick, knot] (-0.2, -0.2) -- (0.2, 0.2);
      \draw [orange, thick, knot] (-0.2, -0.2) -- (0.2, -0.2);
      \draw [orange, thick, knot] (-0.2, 0.2) -- (0.2, 0.2);
      
    \end{scope}
    
    \begin{scope}[yshift=-1.5cm, xshift=8cm]
      \draw[thick,  orange] (0.5,-0.5) -- (-1,1);
      \draw [orange, thick, knot]  (1,-1) -- (-0.5,0.5);
      \draw [orange, thick, knot]  (1,-1) -- (0,0);
      \draw [orange, thick, knot] (0,0) -- (-0.5, 0.5);
      
      \draw [thick, orange] (-1,-1) -- (1,1);
      \draw [orange, thick, knot] (-0.5,-0.5) -- (0.5,0.5);
      \draw [orange, thick, knot] (-1,-1) -- (0,0);
      
       \draw [white, thick, knot] (-0.2, -0.2) -- (0.2, 0.2);
      \draw [orange, thick, knot] (-0.2, -0.2) -- (0.2, -0.2);
      \draw [orange, thick, knot] (-0.2, 0.2) -- (0.2, 0.2);
      
      \node (btl) at (-1, 1) {};
      \node (tbl) at (-1, 2) {};
      \node (btr) at (1, 1) {};
      \node (tbr) at (1, 2) {};

       \draw [thick, orange] (btr.center) .. controls (btr.4 north east) and (tbr.4 south east) .. (tbr.center) ;

    \end{scope}
    
    \begin{scope}[yshift=0cm, xshift=8cm]
    \node (ttl) at (-1, 2.5) {};
    \node (ttr) at (1, 2.5) {};
    \node (tbl) at (-1, 0.5) {};
    \node (tbr) at (1, 0.5) {};
    
    \node (btl) at (-1, -0.5) {};
    \node (btr) at (1, -0.5) {};
    \node (bbl) at (-1, -2.5) {};
    \node (bbr) at (1, -2.5) {};
    \node (leftcenter) at (-4.5, -0.75) {};
    \node (rightcenter) at (4.5, -0.75) {};

    \draw [orange] (tbl.center) .. controls (tbl.8 north west) and (ttl.8 south west) .. (ttl.center);
    
    \draw [orange] (bbl.center) .. controls (bbl.8 north west) and (btl.8 south west) .. (btl.center);
    
    \draw [orange] (bbr.center) .. controls (bbr.4 north east) and (rightcenter.4 north west) .. (ttr.center);
    \end{scope}

    \begin{scope}[xshift=8cm]
    \node (boxtl) at (-3, 3) {};
    \node (boxtr) at (-1, 3) {};
    \node (boxbl) at (-3, -3) {};
    \node (boxbr) at (-1, -3) {};
    
    \draw [thick, green] (boxtl.center) -- (boxtr.center);
    \draw [thick, green] (boxtl.center) -- (boxbl.center);
    \draw [thick, green] (boxbl.center) -- (boxbr.center);
    \draw [thick, green] (boxbr.center) -- (boxtr.center);
    \end{scope}
    
    \begin{scope}[xshift=12cm]
    \node (boxtl) at (-3, 3) {};
    \node (boxtr) at (-1, 3) {};
    \node (boxbl) at (-3, -3) {};
    \node (boxbr) at (-1, -3) {};
    
    \draw [thick, green] (boxtl.center) -- (boxtr.center);
    \draw [thick, green] (boxtl.center) -- (boxbl.center);
    \draw [thick, green] (boxbl.center) -- (boxbr.center);
    \draw [thick, green] (boxbr.center) -- (boxtr.center);
    \end{scope}
    
    
    \begin{scope}[yshift=1.5cm, xshift=16cm]
      \draw[thick, orange] (0.5,-0.5) -- (-1,1);
      \draw [orange, thick, knot]  (1,-1) -- (-0.5,0.5);
      \draw [orange, thick, knot]  (1,-1) -- (0,0);
      \draw [orange, thick, knot] (0,0) -- (-0.5, 0.5);
      
      \draw [thick,  orange] (-1,-1) -- (1,1);
      \draw [orange, thick, knot] (-0.5,-0.5) -- (0.5,0.5);
      \draw [orange, thick, knot] (-1,-1) -- (0,0);
      
       \draw [white, thick, knot] (-0.2, -0.2) -- (0.2, 0.2);
      \draw [orange, thick, knot] (-0.2, -0.2) -- (0.2, -0.2);
      \draw [orange, thick, knot] (-0.2, 0.2) -- (0.2, 0.2);
      
    \end{scope}
    
    \begin{scope}[yshift=-1.5cm, xshift=16cm]
      \draw[thick, cyan] (0.5,-0.5) -- (-1,1);
      \draw [cyan, thick, knot]  (1,-1) -- (-0.5,0.5);
      \draw [cyan, thick, knot]  (1,-1) -- (0,0);
      \draw [cyan, thick, knot] (0,0) -- (-0.5, 0.5);
      
      \draw [thick, cyan] (-1,-1) -- (1,1);
      \draw [cyan, thick, knot] (-0.5,-0.5) -- (0.5,0.5);
      \draw [cyan, thick, knot] (-1,-1) -- (0,0);
      
       \draw [white, thick, knot] (-0.2, -0.2) -- (0.2, 0.2);
      \draw [cyan, thick, knot] (-0.2, -0.2) -- (0.2, -0.2);
      \draw [cyan, thick, knot] (-0.2, 0.2) -- (0.2, 0.2);
      
      \node (btl) at (-1, 1) {};
      \node (tbl) at (-1, 2) {};
      \node (btr) at (1, 1) {};
      \node (tbr) at (1, 2) {};

    \end{scope}
    
    \begin{scope}[yshift=0cm, xshift=16cm]
    \node (ttl) at (-1, 2.5) {};
    \node (ttr) at (1, 2.5) {};
    \node (tbl) at (-1, 0.5) {};
    \node (tbr) at (1, 0.5) {};

    \node (btl) at (-1, -0.5) {};
    \node (btr) at (1, -0.5) {};
    \node (bbl) at (-1, -2.5) {};
    \node (bbr) at (1, -2.5) {};
    \node (leftcenter) at (-4.5, -0.75) {};
    \node (rightcenter) at (4.5, -0.75) {};
    
    \draw [thick, cyan] (bbl.center) .. controls (bbl.8 north west) and (btl.8 south west) .. (btl.center) ;
    
     \draw [thick, cyan] (bbr.center) .. controls (bbr.8 north east) and (btr.8 south east) .. (btr.center) ;
     
     \draw [thick, orange] (tbl.center) .. controls (tbl.8 north west) and (ttl.8 south west) .. (ttl.center) ;
    
     \draw [thick, orange] (tbr.center) .. controls (tbr.8 north east) and (ttr.8 south east) .. (ttr.center) ;
    
    \end{scope}

    \begin{scope}[xshift=16cm]
    \node (boxtl) at (-3, 3) {};
    \node (boxtr) at (-1, 3) {};
    \node (boxbl) at (-3, -3) {};
    \node (boxbr) at (-1, -3) {};
    
    \draw [thick, green] (boxtl.center) -- (boxtr.center);
    \draw [thick, green] (boxtl.center) -- (boxbl.center);
    \draw [thick, green] (boxbl.center) -- (boxbr.center);
    \draw [thick, green] (boxbr.center) -- (boxtr.center);
    \end{scope}
    
    \begin{scope}[xshift=20cm]
    \node (boxtl) at (-3, 3) {};
    \node (boxtr) at (-1, 3) {};
    \node (boxbl) at (-3, -3) {};
    \node (boxbr) at (-1, -3) {};
    
    \draw [thick, green] (boxtl.center) -- (boxtr.center);
    \draw [thick, green] (boxtl.center) -- (boxbl.center);
    \draw [thick, green] (boxbl.center) -- (boxbr.center);
    \draw [thick, green] (boxbr.center) -- (boxtr.center);
    \end{scope}
    
    
    \begin{scope}[yshift=1.5cm, xshift=24cm]
      \draw[thick,  orange] (0.5,-0.5) -- (-1,1);
      \draw [cyan, thick, knot]  (1,-1) -- (-0.5,0.5);
      \draw [cyan, thick, knot]  (1,-1) -- (0,0);
      \draw [orange, thick, knot] (0,0) -- (-0.5, 0.5);
      
      \draw [thick,  orange] (-1,-1) -- (1,1);
      \draw [orange, thick, knot] (-0.5,-0.5) -- (0.5,0.5);
      \draw [cyan, thick, knot] (-1,-1) -- (0,0);
      
       \draw [white, thick, knot] (-0.2, -0.2) -- (0.2, 0.2);
      \draw [cyan, thick, knot] (-0.2, -0.2) -- (0.2, -0.2);
      \draw [orange, thick, knot] (-0.2, 0.2) -- (0.2, 0.2);
      
    \end{scope}
    
    \begin{scope}[yshift=-1.5cm, xshift=24cm]
      \draw[thick,  cyan] (0.5,-0.5) -- (-1,1);
      \draw [orange, thick, knot]  (1,-1) -- (-0.5,0.5);
      \draw [orange, thick, knot]  (1,-1) -- (0,0);
      \draw [cyan, thick, knot] (0,0) -- (-0.5, 0.5);
      
      \draw [thick, cyan] (-1,-1) -- (1,1);
      \draw [cyan, thick, knot] (-0.5,-0.5) -- (0.5,0.5);
      \draw [orange, thick, knot] (-1,-1) -- (0,0);
      
       \draw [white, thick, knot] (-0.2, -0.2) -- (0.2, 0.2);
      \draw [orange, thick, knot] (-0.2, -0.2) -- (0.2, -0.2);
      \draw [cyan, thick, knot] (-0.2, 0.2) -- (0.2, 0.2);
      
      \node (btl) at (-1, 1) {};
      \node (tbl) at (-1, 2) {};
      \node (btr) at (1, 1) {};
      \node (tbr) at (1, 2) {};
      
      \draw [thick, cyan] (btl.center) .. controls (btl.4 north west) and (tbl.4 south west) .. (tbl.center) ;
      
       \draw [thick, cyan] (btr.center) .. controls (btr.4 north east) and (tbr.4 south east) .. (tbr.center) ;
      
    \end{scope}
    
    \begin{scope}[yshift=0cm, xshift=24cm]
    \node (ttl) at (-1, 2.5) {};
    \node (ttr) at (1, 2.5) {};
    \node (tbl) at (-1, 0.5) {};
    \node (tbr) at (1, 0.5) {};

    \node (btl) at (-1, -0.5) {};
    \node (btr) at (1, -0.5) {};
    \node (bbl) at (-1, -2.5) {};
    \node (bbr) at (1, -2.5) {};
    \node (leftcenter) at (-4.5, -0.75) {};
    \node (rightcenter) at (4.5, -0.75) {};
    
    \draw [orange] (bbl.center) .. controls (bbl.4 north west) and (leftcenter.4 north east) .. (ttl.center);
    
    \draw [orange] (bbr.center) .. controls (bbr.4 north east) and (rightcenter.4 north west) .. (ttr.center);
    \end{scope}

    \begin{scope}[xshift=24cm]
    \node (boxtl) at (-3, 3) {};
    \node (boxtr) at (-1, 3) {};
    \node (boxbl) at (-3, -3) {};
    \node (boxbr) at (-1, -3) {};
    
    \draw [thick, green] (boxtl.center) -- (boxtr.center);
    \draw [thick, green] (boxtl.center) -- (boxbl.center);
    \draw [thick, green] (boxbl.center) -- (boxbr.center);
    \draw [thick, green] (boxbr.center) -- (boxtr.center);
    \end{scope}
    
    \begin{scope}[xshift=28cm]
    \node (boxtl) at (-3, 3) {};
    \node (boxtr) at (-1, 3) {};
    \node (boxbl) at (-3, -3) {};
    \node (boxbr) at (-1, -3) {};
    
    \draw [thick, green] (boxtl.center) -- (boxtr.center);
    \draw [thick, green] (boxtl.center) -- (boxbl.center);
    \draw [thick, green] (boxbl.center) -- (boxbr.center);
    \draw [thick, green] (boxbr.center) -- (boxtr.center);
    \end{scope}

    
    \begin{scope}[yshift=-4cm]
    
        \begin{scope}
        \draw [-{Stealth}, double] (0,0) -- (0,-1.5);
        \end{scope}
        
        \begin{scope}[xshift=8cm]
        \draw [-{Stealth}, double] (0,0) -- (0,-1.5);
        \end{scope}
        
        \begin{scope}[xshift=16cm]
             \begin{scope}
            \draw [-{Stealth}, double] (0,0) -- (0,-1.5);
            \end{scope}
            
            \begin{scope}[xshift=8cm]
            \draw [-{Stealth}, double] (0,0) -- (0,-1.5);
            \end{scope}
        \end{scope}
        
    \end{scope}
    
    
    \begin{scope}[yshift=-9cm]
    
    \begin{scope}[yshift=-1.5cm, xshift=0cm]
      
      \node (btl) at (-1, 1) {};
      \node (tbl) at (-1, 2) {};
      \node (btr) at (1, 1) {};
      \node (tbr) at (1, 2) {};
      
      \draw [thick, orange] (btl.center) .. controls (btl.4 north west) and (tbl.4 south west) .. (tbl.center) ;
      
    \end{scope}
    
    \begin{scope}[yshift=0cm, xshift=0cm]
    \node (ttl) at (-1, 2.5) {};
    \node (ttr) at (1, 2.5) {};
    \node (tbl) at (-1, 0.5) {};
    \node (tbr) at (1, 0.5) {};

    \node (btl) at (-1, -0.5) {};
    \node (btr) at (1, -0.5) {};
    \node (bbl) at (-1, -2.5) {};
    \node (bbr) at (1, -2.5) {};
    \node (leftcenter) at (-4.5, -0.75) {};
    \node (rightcenter) at (4.5, -0.75) {};
    
    \draw [thick, orange] (bbl.center) .. controls (bbl.4 north west) and (leftcenter.4 north east) .. (ttl.center);
    
    \draw [thick, red] (tbr.center) .. controls (tbr.8 north east) and (ttr.8 south east) .. (ttr.center) ;
    
    \draw [thick, red] (tbr.center) .. controls (tbr.8 north west) and (ttr.8 south west) .. (ttr.center) ;
    
    \draw [cyan] (bbr.center) .. controls (bbr.8 north east) and (btr.8 south east) .. (btr.center);
    
    \draw [cyan] (bbr.center) .. controls (bbr.8 north west) and (btr.8 south west) .. (btr.center);
    
    \draw [thick, orange] (tbl.center) .. controls (tbl.8 north east) and (ttl.8 south east) .. (ttl.center) ;
    
    \draw [orange] (bbl.center) .. controls (bbl.8 north east) and (btl.8 south east) .. (btl.center);

    \end{scope}

    \begin{scope}[xshift=0cm]
    \node (boxtl) at (-3, 3) {};
    \node (boxtr) at (-1, 3) {};
    \node (boxbl) at (-3, -3) {};
    \node (boxbr) at (-1, -3) {};
    
    \draw [thick, green] (boxtl.center) -- (boxtr.center);
    \draw [thick, green] (boxtl.center) -- (boxbl.center);
    \draw [thick, green] (boxbl.center) -- (boxbr.center);
    \draw [thick, green] (boxbr.center) -- (boxtr.center);
    \end{scope}
    
    \begin{scope}[xshift=4cm]
    \node (boxtl) at (-3, 3) {};
    \node (boxtr) at (-1, 3) {};
    \node (boxbl) at (-3, -3) {};
    \node (boxbr) at (-1, -3) {};
    
    \draw [thick, green] (boxtl.center) -- (boxtr.center);
    \draw [thick, green] (boxtl.center) -- (boxbl.center);
    \draw [thick, green] (boxbl.center) -- (boxbr.center);
    \draw [thick, green] (boxbr.center) -- (boxtr.center);
    \end{scope}
    
    
    \begin{scope}[yshift=1.5cm, xshift=8cm]

    \end{scope}
    
    \begin{scope}[yshift=-1.5cm, xshift=8cm]

      \node (btl) at (-1, 1) {};
      \node (tbl) at (-1, 2) {};
      \node (btr) at (1, 1) {};
      \node (tbr) at (1, 2) {};

       \draw [thick, orange] (btr.center) .. controls (btr.4 north east) and (tbr.4 south east) .. (tbr.center) ;

    \end{scope}
    
    \begin{scope}[yshift=0cm, xshift=8cm]
    \node (ttl) at (-1, 2.5) {};
    \node (ttr) at (1, 2.5) {};
    \node (tbl) at (-1, 0.5) {};
    \node (tbr) at (1, 0.5) {};
    
    \node (btl) at (-1, -0.5) {};
    \node (btr) at (1, -0.5) {};
    \node (bbl) at (-1, -2.5) {};
    \node (bbr) at (1, -2.5) {};
    \node (leftcenter) at (-4.5, -0.75) {};
    \node (rightcenter) at (4.5, -0.75) {};

    \draw [thick, red] (tbl.center) .. controls (tbl.8 north east) and (ttl.8 south east) .. (ttl.center) ;
    
    \draw [thick, red] (tbl.center) .. controls (tbl.8 north west) and (ttl.8 south west) .. (ttl.center) ;
    
    \draw [cyan] (bbl.center) .. controls (bbl.8 north east) and (btl.8 south east) .. (btl.center);
    
    \draw [cyan] (bbl.center) .. controls (bbl.8 north west) and (btl.8 south west) .. (btl.center);
    
    \draw [orange] (bbr.center) .. controls (bbr.4 north east) and (rightcenter.4 north west) .. (ttr.center);
    
    \draw [thick, orange] (tbr.center) .. controls (tbr.8 north west) and (ttr.8 south west) .. (ttr.center) ;
    
    \draw [orange] (bbr.center) .. controls (bbr.8 north west) and (btr.8 south west) .. (btr.center);
    
    \end{scope}

    \begin{scope}[xshift=8cm]
    \node (boxtl) at (-3, 3) {};
    \node (boxtr) at (-1, 3) {};
    \node (boxbl) at (-3, -3) {};
    \node (boxbr) at (-1, -3) {};
    
    \draw [thick, green] (boxtl.center) -- (boxtr.center);
    \draw [thick, green] (boxtl.center) -- (boxbl.center);
    \draw [thick, green] (boxbl.center) -- (boxbr.center);
    \draw [thick, green] (boxbr.center) -- (boxtr.center);
    \end{scope}
    
    \begin{scope}[xshift=12cm]
    \node (boxtl) at (-3, 3) {};
    \node (boxtr) at (-1, 3) {};
    \node (boxbl) at (-3, -3) {};
    \node (boxbr) at (-1, -3) {};
    
    \draw [thick, green] (boxtl.center) -- (boxtr.center);
    \draw [thick, green] (boxtl.center) -- (boxbl.center);
    \draw [thick, green] (boxbl.center) -- (boxbr.center);
    \draw [thick, green] (boxbr.center) -- (boxtr.center);
    \end{scope}
    
    
    \begin{scope}[yshift=1.5cm, xshift=16cm]
   
    \end{scope}
    
    \begin{scope}[yshift=-1.5cm, xshift=16cm]

      \node (btl) at (-1, 1) {};
      \node (tbl) at (-1, 2) {};
      \node (btr) at (1, 1) {};
      \node (tbr) at (1, 2) {};

    \end{scope}
    
    \begin{scope}[yshift=0cm, xshift=16cm]
    \node (ttl) at (-1, 2.5) {};
    \node (ttr) at (1, 2.5) {};
    \node (tbl) at (-1, 0.5) {};
    \node (tbr) at (1, 0.5) {};

    \node (btl) at (-1, -0.5) {};
    \node (btr) at (1, -0.5) {};
    \node (bbl) at (-1, -2.5) {};
    \node (bbr) at (1, -2.5) {};
    \node (leftcenter) at (-4.5, -0.75) {};
    \node (rightcenter) at (4.5, -0.75) {};

     \draw [thick, red] (tbl.center) .. controls (tbl.8 north west) and (ttl.8 south west) .. (ttl.center) ;
     
     \draw [thick, red] (tbl.center) .. controls (tbl.8 north east) and (ttl.8 south east) .. (ttl.center) ;
    
     \draw [thick, orange] (tbr.center) .. controls (tbr.8 north east) and (ttr.8 south east) .. (ttr.center) ;
     
    \draw [thick, orange] (tbr.center) .. controls (tbr.8 north west) and (ttr.8 south west) .. (ttr.center) ;
    
    \draw [gray] (bbr.center) .. controls (bbr.8 north east) and (btr.8 south east) .. (btr.center);
    
    \draw [gray] (bbr.center) .. controls (bbr.8 north west) and (btr.8 south west) .. (btr.center);
    
     \draw [cyan] (bbl.center) .. controls (bbl.8 north east) and (btl.8 south east) .. (btl.center);
    
    \draw [cyan] (bbl.center) .. controls (bbl.8 north west) and (btl.8 south west) .. (btl.center);
    
    \end{scope}

    \begin{scope}[xshift=16cm]
    \node (boxtl) at (-3, 3) {};
    \node (boxtr) at (-1, 3) {};
    \node (boxbl) at (-3, -3) {};
    \node (boxbr) at (-1, -3) {};
    
    \draw [thick, green] (boxtl.center) -- (boxtr.center);
    \draw [thick, green] (boxtl.center) -- (boxbl.center);
    \draw [thick, green] (boxbl.center) -- (boxbr.center);
    \draw [thick, green] (boxbr.center) -- (boxtr.center);
    \end{scope}
    
    \begin{scope}[xshift=20cm]
    \node (boxtl) at (-3, 3) {};
    \node (boxtr) at (-1, 3) {};
    \node (boxbl) at (-3, -3) {};
    \node (boxbr) at (-1, -3) {};
    
    \draw [thick, green] (boxtl.center) -- (boxtr.center);
    \draw [thick, green] (boxtl.center) -- (boxbl.center);
    \draw [thick, green] (boxbl.center) -- (boxbr.center);
    \draw [thick, green] (boxbr.center) -- (boxtr.center);
    \end{scope}
    

    \begin{scope}[yshift=-1.5cm, xshift=24cm]

      \node (btl) at (-1, 1) {};
      \node (tbl) at (-1, 2) {};
      \node (btr) at (1, 1) {};
      \node (tbr) at (1, 2) {};
      
      \draw [thick, orange] (btl.center) .. controls (btl.4 north west) and (tbl.4 south west) .. (tbl.center) ;
      
       \draw [thick, cyan] (btr.center) .. controls (btr.4 north east) and (tbr.4 south east) .. (tbr.center) ;
      
    \end{scope}
    
    \begin{scope}[yshift=0cm, xshift=24cm]
    \node (ttl) at (-1, 2.5) {};
    \node (ttr) at (1, 2.5) {};
    \node (tbl) at (-1, 0.5) {};
    \node (tbr) at (1, 0.5) {};

    \node (btl) at (-1, -0.5) {};
    \node (btr) at (1, -0.5) {};
    \node (bbl) at (-1, -2.5) {};
    \node (bbr) at (1, -2.5) {};
    \node (leftcenter) at (-4.5, -0.75) {};
    \node (rightcenter) at (4.5, -0.75) {};

    \draw [orange] (bbl.center) .. controls (bbl.4 north west) and (leftcenter.4 north east) .. (ttl.center);
    
    \draw [cyan] (bbr.center) .. controls (bbr.4 north east) and (rightcenter.4 north west) .. (ttr.center);

    \draw [thick, cyan] (tbr.center) .. controls (tbr.8 north west) and (ttr.8 south west) .. (ttr.center) ;

    \draw [cyan] (bbr.center) .. controls (bbr.8 north west) and (btr.8 south west) .. (btr.center);
    
     \draw [orange] (bbl.center) .. controls (bbl.8 north east) and (btl.8 south east) .. (btl.center);

    \draw [orange] (tbl.center) .. controls (tbl.8 north east) and (ttl.8 south east) .. (ttl.center);

    \end{scope}

    \begin{scope}[xshift=24cm]
    \node (boxtl) at (-3, 3) {};
    \node (boxtr) at (-1, 3) {};
    \node (boxbl) at (-3, -3) {};
    \node (boxbr) at (-1, -3) {};
    
    \draw [thick, green] (boxtl.center) -- (boxtr.center);
    \draw [thick, green] (boxtl.center) -- (boxbl.center);
    \draw [thick, green] (boxbl.center) -- (boxbr.center);
    \draw [thick, green] (boxbr.center) -- (boxtr.center);
    \end{scope}
    
    \begin{scope}[xshift=28cm]
    \node (boxtl) at (-3, 3) {};
    \node (boxtr) at (-1, 3) {};
    \node (boxbl) at (-3, -3) {};
    \node (boxbr) at (-1, -3) {};
    
    \draw [thick, green] (boxtl.center) -- (boxtr.center);
    \draw [thick, green] (boxtl.center) -- (boxbl.center);
    \draw [thick, green] (boxbl.center) -- (boxbr.center);
    \draw [thick, green] (boxbr.center) -- (boxtr.center);
    \end{scope}
    
    \end{scope}
    
    \end{tikzpicture}

    \caption{ \textbf{Top row:} $B$-circle possibilities for a Matching pair; \hspace{37pt} 
    \textbf{Bottom row:} $B$-circles in the two new smaller diagrams that result from smoothing the Matching pair}
    \label{B-possibilities}
    \vspace{16pt}
\end{figure}


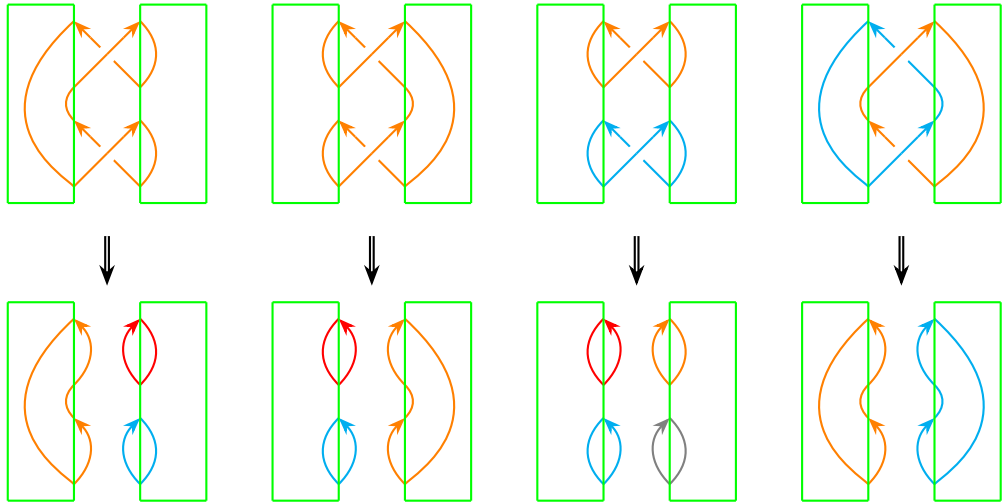
\begin{figure}
    \centering
    
    \begin{tikzpicture}
    [every path/.style={thick}, every
node/.style={transform shape, knot crossing, inner sep=2pt}, knot gap = 9, scale = .44]
    
\begin{scope}[yshift=1.5cm, xshift=0cm]
      \draw[thick, -{Stealth}, orange] (0.5,-0.5) -- (-1,1);
      \draw [orange, thick, knot]  (1,-1) -- (-0.5,0.5);
      \draw [orange, thick, knot]  (1,-1) -- (0,0);
      \draw [orange, thick, knot] (0,0) -- (-0.5, 0.5);

      \draw [thick, -{Stealth}, orange] (-1,-1) -- (1,1);
      \draw [orange, thick, knot] (-0.5,-0.5) -- (0.5,0.5);
      \draw [orange, thick, knot] (-1,-1) -- (0,0);

    \end{scope}
    
    \begin{scope}[yshift=-1.5cm, xshift=0cm]
      \draw[thick, -{Stealth}, orange] (0.5,-0.5) -- (-1,1);
      \draw [orange, thick, knot]  (1,-1) -- (-0.5,0.5);
      \draw [orange, thick, knot]  (1,-1) -- (0,0);
      \draw [orange, thick, knot] (0,0) -- (-0.5, 0.5);
      
      \draw [thick, -{Stealth}, orange] (-1,-1) -- (1,1);
      \draw [orange, thick, knot] (-0.5,-0.5) -- (0.5,0.5);
      \draw [orange, thick, knot] (-1,-1) -- (0,0);
      
      \node (btl) at (-1, 1) {};
      \node (tbl) at (-1, 2) {};
      \node (btr) at (1, 1) {};
      \node (tbr) at (1, 2) {};
      
      \draw [thick, orange] (btl.center) .. controls (btl.4 north west) and (tbl.4 south west) .. (tbl.center) ;

    \end{scope}
    
    \begin{scope}[yshift=0cm, xshift=0cm]
    \node (ttl) at (-1, 2.5) {};
    \node (ttr) at (1, 2.5) {};
    \node (tbl) at (-1, 0.5) {};
    \node (tbr) at (1, 0.5) {};

    \node (btl) at (-1, -0.5) {};
    \node (btr) at (1, -0.5) {};
    \node (bbl) at (-1, -2.5) {};
    \node (bbr) at (1, -2.5) {};
    \node (leftcenter) at (-4.5, -0.75) {};
    \node (rightcenter) at (4.5, -0.75) {};
    
    \draw [thick, orange] (bbl.center) .. controls (bbl.4 north west) and (leftcenter.4 north east) .. (ttl.center);
    
    \draw [thick, orange] (tbr.center) .. controls (tbr.8 north east) and (ttr.8 south east) .. (ttr.center) ;
    
    \draw [orange] (bbr.center) .. controls (bbr.8 north east) and (btr.8 south east) .. (btr.center);
    \end{scope}

    \begin{scope}[xshift=0cm]
    \node (boxtl) at (-3, 3) {};
    \node (boxtr) at (-1, 3) {};
    \node (boxbl) at (-3, -3) {};
    \node (boxbr) at (-1, -3) {};
    
    \draw [thick, green] (boxtl.center) -- (boxtr.center);
    \draw [thick, green] (boxtl.center) -- (boxbl.center);
    \draw [thick, green] (boxbl.center) -- (boxbr.center);
    \draw [thick, green] (boxbr.center) -- (boxtr.center);
    \end{scope}
    
    \begin{scope}[xshift=4cm]
    \node (boxtl) at (-3, 3) {};
    \node (boxtr) at (-1, 3) {};
    \node (boxbl) at (-3, -3) {};
    \node (boxbr) at (-1, -3) {};
    
    \draw [thick, green] (boxtl.center) -- (boxtr.center);
    \draw [thick, green] (boxtl.center) -- (boxbl.center);
    \draw [thick, green] (boxbl.center) -- (boxbr.center);
    \draw [thick, green] (boxbr.center) -- (boxtr.center);
    \end{scope}
    
    
    \begin{scope}[yshift=1.5cm, xshift=8cm]
      \draw[thick, -{Stealth}, orange] (0.5,-0.5) -- (-1,1);
      \draw [orange, thick, knot]  (1,-1) -- (-0.5,0.5);
      \draw [orange, thick, knot]  (1,-1) -- (0,0);
      \draw [orange, thick, knot] (0,0) -- (-0.5, 0.5);
      
      \draw [thick, -{Stealth}, orange] (-1,-1) -- (1,1);
      \draw [orange, thick, knot] (-0.5,-0.5) -- (0.5,0.5);
      \draw [orange, thick, knot] (-1,-1) -- (0,0);
      
    \end{scope}
    
    \begin{scope}[yshift=-1.5cm, xshift=8cm]
      \draw[thick, -{Stealth}, orange] (0.5,-0.5) -- (-1,1);
      \draw [orange, thick, knot]  (1,-1) -- (-0.5,0.5);
      \draw [orange, thick, knot]  (1,-1) -- (0,0);
      \draw [orange, thick, knot] (0,0) -- (-0.5, 0.5);
      
      \draw [thick, -{Stealth}, orange] (-1,-1) -- (1,1);
      \draw [orange, thick, knot] (-0.5,-0.5) -- (0.5,0.5);
      \draw [orange, thick, knot] (-1,-1) -- (0,0);
      
      \node (btl) at (-1, 1) {};
      \node (tbl) at (-1, 2) {};
      \node (btr) at (1, 1) {};
      \node (tbr) at (1, 2) {};

       \draw [thick, orange] (btr.center) .. controls (btr.4 north east) and (tbr.4 south east) .. (tbr.center) ;

    \end{scope}
    
    \begin{scope}[yshift=0cm, xshift=8cm]
    \node (ttl) at (-1, 2.5) {};
    \node (ttr) at (1, 2.5) {};
    \node (tbl) at (-1, 0.5) {};
    \node (tbr) at (1, 0.5) {};

    \node (btl) at (-1, -0.5) {};
    \node (btr) at (1, -0.5) {};
    \node (bbl) at (-1, -2.5) {};
    \node (bbr) at (1, -2.5) {};
    \node (leftcenter) at (-4.5, -0.75) {};
    \node (rightcenter) at (4.5, -0.75) {};

    \draw [orange] (tbl.center) .. controls (tbl.8 north west) and (ttl.8 south west) .. (ttl.center);
    
    \draw [orange] (bbl.center) .. controls (bbl.8 north west) and (btl.8 south west) .. (btl.center);

    \draw [orange] (bbr.center) .. controls (bbr.4 north east) and (rightcenter.4 north west) .. (ttr.center);
    \end{scope}

    \begin{scope}[xshift=8cm]
    \node (boxtl) at (-3, 3) {};
    \node (boxtr) at (-1, 3) {};
    \node (boxbl) at (-3, -3) {};
    \node (boxbr) at (-1, -3) {};
    
    \draw [thick, green] (boxtl.center) -- (boxtr.center);
    \draw [thick, green] (boxtl.center) -- (boxbl.center);
    \draw [thick, green] (boxbl.center) -- (boxbr.center);
    \draw [thick, green] (boxbr.center) -- (boxtr.center);
    \end{scope}
    
    \begin{scope}[xshift=12cm]
    \node (boxtl) at (-3, 3) {};
    \node (boxtr) at (-1, 3) {};
    \node (boxbl) at (-3, -3) {};
    \node (boxbr) at (-1, -3) {};
    
    \draw [thick, green] (boxtl.center) -- (boxtr.center);
    \draw [thick, green] (boxtl.center) -- (boxbl.center);
    \draw [thick, green] (boxbl.center) -- (boxbr.center);
    \draw [thick, green] (boxbr.center) -- (boxtr.center);
    \end{scope}
    
    
    \begin{scope}[yshift=1.5cm, xshift=16cm]
      \draw[thick, -{Stealth}, orange] (0.5,-0.5) -- (-1,1);
      \draw [orange, thick, knot]  (1,-1) -- (-0.5,0.5);
      \draw [orange, thick, knot]  (1,-1) -- (0,0);
      \draw [orange, thick, knot] (0,0) -- (-0.5, 0.5);
      
      \draw [thick, -{Stealth}, orange] (-1,-1) -- (1,1);
      \draw [orange, thick, knot] (-0.5,-0.5) -- (0.5,0.5);
      \draw [orange, thick, knot] (-1,-1) -- (0,0);

    \end{scope}
    
    \begin{scope}[yshift=-1.5cm, xshift=16cm]
      \draw[thick, -{Stealth}, cyan] (0.5,-0.5) -- (-1,1);
      \draw [cyan, thick, knot]  (1,-1) -- (-0.5,0.5);
      \draw [cyan, thick, knot]  (1,-1) -- (0,0);
      \draw [cyan, thick, knot] (0,0) -- (-0.5, 0.5);
      
      \draw [thick, -{Stealth}, cyan] (-1,-1) -- (1,1);
      \draw [cyan, thick, knot] (-0.5,-0.5) -- (0.5,0.5);
      \draw [cyan, thick, knot] (-1,-1) -- (0,0);

      \node (btl) at (-1, 1) {};
      \node (tbl) at (-1, 2) {};
      \node (btr) at (1, 1) {};
      \node (tbr) at (1, 2) {};

    \end{scope}
    
    \begin{scope}[yshift=0cm, xshift=16cm]
    \node (ttl) at (-1, 2.5) {};
    \node (ttr) at (1, 2.5) {};
    \node (tbl) at (-1, 0.5) {};
    \node (tbr) at (1, 0.5) {};

    \node (btl) at (-1, -0.5) {};
    \node (btr) at (1, -0.5) {};
    \node (bbl) at (-1, -2.5) {};
    \node (bbr) at (1, -2.5) {};
    \node (leftcenter) at (-4.5, -0.75) {};
    \node (rightcenter) at (4.5, -0.75) {};
    
    \draw [thick, cyan] (bbl.center) .. controls (bbl.8 north west) and (btl.8 south west) .. (btl.center) ;
    
     \draw [thick, cyan] (bbr.center) .. controls (bbr.8 north east) and (btr.8 south east) .. (btr.center) ;
     
     \draw [thick, orange] (tbl.center) .. controls (tbl.8 north west) and (ttl.8 south west) .. (ttl.center) ;
    
     \draw [thick, orange] (tbr.center) .. controls (tbr.8 north east) and (ttr.8 south east) .. (ttr.center) ;
    
    \end{scope}

    \begin{scope}[xshift=16cm]
    \node (boxtl) at (-3, 3) {};
    \node (boxtr) at (-1, 3) {};
    \node (boxbl) at (-3, -3) {};
    \node (boxbr) at (-1, -3) {};
    
    \draw [thick, green] (boxtl.center) -- (boxtr.center);
    \draw [thick, green] (boxtl.center) -- (boxbl.center);
    \draw [thick, green] (boxbl.center) -- (boxbr.center);
    \draw [thick, green] (boxbr.center) -- (boxtr.center);
    \end{scope}
    
    \begin{scope}[xshift=20cm]
    \node (boxtl) at (-3, 3) {};
    \node (boxtr) at (-1, 3) {};
    \node (boxbl) at (-3, -3) {};
    \node (boxbr) at (-1, -3) {};
    
    \draw [thick, green] (boxtl.center) -- (boxtr.center);
    \draw [thick, green] (boxtl.center) -- (boxbl.center);
    \draw [thick, green] (boxbl.center) -- (boxbr.center);
    \draw [thick, green] (boxbr.center) -- (boxtr.center);
    \end{scope}
    
    
    \begin{scope}[yshift=1.5cm, xshift=24cm]
      \draw[thick, -{Stealth}, cyan] (0.5,-0.5) -- (-1,1);
      \draw [orange, thick, knot]  (1,-1) -- (-0.5,0.5);
      \draw [cyan, thick, knot]  (1,-1) -- (0,0);
      \draw [cyan, thick, knot] (0,0) -- (-0.5, 0.5);
      
      \draw [thick, -{Stealth}, orange] (-1,-1) -- (1,1);
      \draw [orange, thick, knot] (-0.5,-0.5) -- (0.5,0.5);
      \draw [orange, thick, knot] (-1,-1) -- (0,0);

    \end{scope}
    
    \begin{scope}[yshift=-1.5cm, xshift=24cm]
      \draw[thick, -{Stealth}, orange] (0.5,-0.5) -- (-1,1);
      \draw [cyan, thick, knot]  (1,-1) -- (-0.5,0.5);
      \draw [orange, thick, knot]  (1,-1) -- (0,0);
      \draw [orange, thick, knot] (0,0) -- (-0.5, 0.5);
      
      \draw [thick, -{Stealth}, cyan] (-1,-1) -- (1,1);
      \draw [cyan, thick, knot] (-0.5,-0.5) -- (0.5,0.5);
      \draw [cyan, thick, knot] (-1,-1) -- (0,0);

      \node (btl) at (-1, 1) {};
      \node (tbl) at (-1, 2) {};
      \node (btr) at (1, 1) {};
      \node (tbr) at (1, 2) {};
      
      \draw [thick, orange] (btl.center) .. controls (btl.4 north west) and (tbl.4 south west) .. (tbl.center) ;
      
       \draw [thick, cyan] (btr.center) .. controls (btr.4 north east) and (tbr.4 south east) .. (tbr.center) ;
      
    \end{scope}
    
    \begin{scope}[yshift=0cm, xshift=24cm]
    \node (ttl) at (-1, 2.5) {};
    \node (ttr) at (1, 2.5) {};
    \node (tbl) at (-1, 0.5) {};
    \node (tbr) at (1, 0.5) {};
    
    \node (btl) at (-1, -0.5) {};
    \node (btr) at (1, -0.5) {};
    \node (bbl) at (-1, -2.5) {};
    \node (bbr) at (1, -2.5) {};
    \node (leftcenter) at (-4.5, -0.75) {};
    \node (rightcenter) at (4.5, -0.75) {};

    \draw [cyan] (bbl.center) .. controls (bbl.4 north west) and (leftcenter.4 north east) .. (ttl.center);
    
    \draw [orange] (bbr.center) .. controls (bbr.4 north east) and (rightcenter.4 north west) .. (ttr.center);
    \end{scope}

    \begin{scope}[xshift=24cm]
    \node (boxtl) at (-3, 3) {};
    \node (boxtr) at (-1, 3) {};
    \node (boxbl) at (-3, -3) {};
    \node (boxbr) at (-1, -3) {};
    
    \draw [thick, green] (boxtl.center) -- (boxtr.center);
    \draw [thick, green] (boxtl.center) -- (boxbl.center);
    \draw [thick, green] (boxbl.center) -- (boxbr.center);
    \draw [thick, green] (boxbr.center) -- (boxtr.center);
    \end{scope}
    
    \begin{scope}[xshift=28cm]
    \node (boxtl) at (-3, 3) {};
    \node (boxtr) at (-1, 3) {};
    \node (boxbl) at (-3, -3) {};
    \node (boxbr) at (-1, -3) {};
    
    \draw [thick, green] (boxtl.center) -- (boxtr.center);
    \draw [thick, green] (boxtl.center) -- (boxbl.center);
    \draw [thick, green] (boxbl.center) -- (boxbr.center);
    \draw [thick, green] (boxbr.center) -- (boxtr.center);
    \end{scope}

    
    \begin{scope}[yshift=-4cm]
    
        \begin{scope}
        \draw [-{Stealth}, double] (0,0) -- (0,-1.5);
        \end{scope}
        
        \begin{scope}[xshift=8cm]
        \draw [-{Stealth}, double] (0,0) -- (0,-1.5);
        \end{scope}
        
        \begin{scope}[xshift=16cm]
             \begin{scope}
            \draw [-{Stealth}, double] (0,0) -- (0,-1.5);
            \end{scope}
            
            \begin{scope}[xshift=8cm]
            \draw [-{Stealth}, double] (0,0) -- (0,-1.5);
            \end{scope}
        \end{scope}
        
    \end{scope}
    
    
    \begin{scope}[yshift=-9cm]
    
    \begin{scope}[yshift=-1.5cm, xshift=0cm]
      
      \node (btl) at (-1, 1) {};
      \node (tbl) at (-1, 2) {};
      \node (btr) at (1, 1) {};
      \node (tbr) at (1, 2) {};
      
      \draw [thick, orange] (btl.center) .. controls (btl.4 north west) and (tbl.4 south west) .. (tbl.center) ;
      
    \end{scope}
    
    \begin{scope}[yshift=0cm, xshift=0cm]
    \node (ttl) at (-1, 2.5) {};
    \node (ttr) at (1, 2.5) {};
    \node (tbl) at (-1, 0.5) {};
    \node (tbr) at (1, 0.5) {};

    \node (btl) at (-1, -0.5) {};
    \node (btr) at (1, -0.5) {};
    \node (bbl) at (-1, -2.5) {};
    \node (bbr) at (1, -2.5) {};
    \node (leftcenter) at (-4.5, -0.75) {};
    \node (rightcenter) at (4.5, -0.75) {};
    
    \draw [thick, orange] (bbl.center) .. controls (bbl.4 north west) and (leftcenter.4 north east) .. (ttl.center);
    
    \draw [thick, red] (tbr.center) .. controls (tbr.8 north east) and (ttr.8 south east) .. (ttr.center) ;
    
    \draw [thick, red, -{Stealth}] (tbr.center) .. controls (tbr.8 north west) and (ttr.8 south west) .. (ttr.center) ;
    
    \draw [cyan] (bbr.center) .. controls (bbr.8 north east) and (btr.8 south east) .. (btr.center);
    
    \draw [cyan, -{Stealth}] (bbr.center) .. controls (bbr.8 north west) and (btr.8 south west) .. (btr.center);
    
    \draw [thick, orange, -{Stealth}] (tbl.center) .. controls (tbl.8 north east) and (ttl.8 south east) .. (ttl.center) ;
    
    \draw [orange, -{Stealth}] (bbl.center) .. controls (bbl.8 north east) and (btl.8 south east) .. (btl.center);

    \end{scope}

    \begin{scope}[xshift=0cm]
    \node (boxtl) at (-3, 3) {};
    \node (boxtr) at (-1, 3) {};
    \node (boxbl) at (-3, -3) {};
    \node (boxbr) at (-1, -3) {};
    
    \draw [thick, green] (boxtl.center) -- (boxtr.center);
    \draw [thick, green] (boxtl.center) -- (boxbl.center);
    \draw [thick, green] (boxbl.center) -- (boxbr.center);
    \draw [thick, green] (boxbr.center) -- (boxtr.center);
    \end{scope}
    
    \begin{scope}[xshift=4cm]
    \node (boxtl) at (-3, 3) {};
    \node (boxtr) at (-1, 3) {};
    \node (boxbl) at (-3, -3) {};
    \node (boxbr) at (-1, -3) {};
    
    \draw [thick, green] (boxtl.center) -- (boxtr.center);
    \draw [thick, green] (boxtl.center) -- (boxbl.center);
    \draw [thick, green] (boxbl.center) -- (boxbr.center);
    \draw [thick, green] (boxbr.center) -- (boxtr.center);
    \end{scope}
    

    \begin{scope}[yshift=-1.5cm, xshift=8cm]

      \node (btl) at (-1, 1) {};
      \node (tbl) at (-1, 2) {};
      \node (btr) at (1, 1) {};
      \node (tbr) at (1, 2) {};

       \draw [thick, orange] (btr.center) .. controls (btr.4 north east) and (tbr.4 south east) .. (tbr.center) ;

    \end{scope}
    
    \begin{scope}[yshift=0cm, xshift=8cm]
    \node (ttl) at (-1, 2.5) {};
    \node (ttr) at (1, 2.5) {};
    \node (tbl) at (-1, 0.5) {};
    \node (tbr) at (1, 0.5) {};
    
    \node (btl) at (-1, -0.5) {};
    \node (btr) at (1, -0.5) {};
    \node (bbl) at (-1, -2.5) {};
    \node (bbr) at (1, -2.5) {};
    \node (leftcenter) at (-4.5, -0.75) {};
    \node (rightcenter) at (4.5, -0.75) {};

    \draw [thick, red, -{Stealth}] (tbl.center) .. controls (tbl.8 north east) and (ttl.8 south east) .. (ttl.center) ;
    
    \draw [thick, red] (tbl.center) .. controls (tbl.8 north west) and (ttl.8 south west) .. (ttl.center) ;
    
    \draw [cyan, -{Stealth}] (bbl.center) .. controls (bbl.8 north east) and (btl.8 south east) .. (btl.center);
    
    \draw [cyan] (bbl.center) .. controls (bbl.8 north west) and (btl.8 south west) .. (btl.center);
    
    \draw [orange] (bbr.center) .. controls (bbr.4 north east) and (rightcenter.4 north west) .. (ttr.center);
    
    \draw [thick, orange, -{Stealth}] (tbr.center) .. controls (tbr.8 north west) and (ttr.8 south west) .. (ttr.center) ;
    
    \draw [orange, -{Stealth}] (bbr.center) .. controls (bbr.8 north west) and (btr.8 south west) .. (btr.center);
    
    \end{scope}

    \begin{scope}[xshift=8cm]
    \node (boxtl) at (-3, 3) {};
    \node (boxtr) at (-1, 3) {};
    \node (boxbl) at (-3, -3) {};
    \node (boxbr) at (-1, -3) {};
    
    \draw [thick, green] (boxtl.center) -- (boxtr.center);
    \draw [thick, green] (boxtl.center) -- (boxbl.center);
    \draw [thick, green] (boxbl.center) -- (boxbr.center);
    \draw [thick, green] (boxbr.center) -- (boxtr.center);
    \end{scope}
    
    \begin{scope}[xshift=12cm]
    \node (boxtl) at (-3, 3) {};
    \node (boxtr) at (-1, 3) {};
    \node (boxbl) at (-3, -3) {};
    \node (boxbr) at (-1, -3) {};
    
    \draw [thick, green] (boxtl.center) -- (boxtr.center);
    \draw [thick, green] (boxtl.center) -- (boxbl.center);
    \draw [thick, green] (boxbl.center) -- (boxbr.center);
    \draw [thick, green] (boxbr.center) -- (boxtr.center);
    \end{scope}
    

    \begin{scope}[yshift=-1.5cm, xshift=16cm]

      \node (btl) at (-1, 1) {};
      \node (tbl) at (-1, 2) {};
      \node (btr) at (1, 1) {};
      \node (tbr) at (1, 2) {};

    \end{scope}
    
    \begin{scope}[yshift=0cm, xshift=16cm]
    \node (ttl) at (-1, 2.5) {};
    \node (ttr) at (1, 2.5) {};
    \node (tbl) at (-1, 0.5) {};
    \node (tbr) at (1, 0.5) {};

    \node (btl) at (-1, -0.5) {};
    \node (btr) at (1, -0.5) {};
    \node (bbl) at (-1, -2.5) {};
    \node (bbr) at (1, -2.5) {};
    \node (leftcenter) at (-4.5, -0.75) {};
    \node (rightcenter) at (4.5, -0.75) {};
     
     \draw [thick, red] (tbl.center) .. controls (tbl.8 north west) and (ttl.8 south west) .. (ttl.center) ;
     
     \draw [thick, red, -{Stealth}] (tbl.center) .. controls (tbl.8 north east) and (ttl.8 south east) .. (ttl.center) ;
    
     \draw [thick, orange] (tbr.center) .. controls (tbr.8 north east) and (ttr.8 south east) .. (ttr.center) ;
     
    \draw [thick, orange, -{Stealth}] (tbr.center) .. controls (tbr.8 north west) and (ttr.8 south west) .. (ttr.center) ;
    
    \draw [gray] (bbr.center) .. controls (bbr.8 north east) and (btr.8 south east) .. (btr.center);
    
    \draw [gray, -{Stealth}] (bbr.center) .. controls (bbr.8 north west) and (btr.8 south west) .. (btr.center);
    
     \draw [cyan, -{Stealth}] (bbl.center) .. controls (bbl.8 north east) and (btl.8 south east) .. (btl.center);
    
    \draw [cyan] (bbl.center) .. controls (bbl.8 north west) and (btl.8 south west) .. (btl.center);
    
    \end{scope}

    \begin{scope}[xshift=16cm]
    \node (boxtl) at (-3, 3) {};
    \node (boxtr) at (-1, 3) {};
    \node (boxbl) at (-3, -3) {};
    \node (boxbr) at (-1, -3) {};
    
    \draw [thick, green] (boxtl.center) -- (boxtr.center);
    \draw [thick, green] (boxtl.center) -- (boxbl.center);
    \draw [thick, green] (boxbl.center) -- (boxbr.center);
    \draw [thick, green] (boxbr.center) -- (boxtr.center);
    \end{scope}
    
    \begin{scope}[xshift=20cm]
    \node (boxtl) at (-3, 3) {};
    \node (boxtr) at (-1, 3) {};
    \node (boxbl) at (-3, -3) {};
    \node (boxbr) at (-1, -3) {};
    
    \draw [thick, green] (boxtl.center) -- (boxtr.center);
    \draw [thick, green] (boxtl.center) -- (boxbl.center);
    \draw [thick, green] (boxbl.center) -- (boxbr.center);
    \draw [thick, green] (boxbr.center) -- (boxtr.center);
    \end{scope}
    

    \begin{scope}[yshift=-1.5cm, xshift=24cm]
      
      \node (btl) at (-1, 1) {};
      \node (tbl) at (-1, 2) {};
      \node (btr) at (1, 1) {};
      \node (tbr) at (1, 2) {};
      
      \draw [thick, orange] (btl.center) .. controls (btl.4 north west) and (tbl.4 south west) .. (tbl.center) ;
      
       \draw [thick, cyan] (btr.center) .. controls (btr.4 north east) and (tbr.4 south east) .. (tbr.center) ;
      
    \end{scope}
    
    \begin{scope}[yshift=0cm, xshift=24cm]
    \node (ttl) at (-1, 2.5) {};
    \node (ttr) at (1, 2.5) {};
    \node (tbl) at (-1, 0.5) {};
    \node (tbr) at (1, 0.5) {};

    \node (btl) at (-1, -0.5) {};
    \node (btr) at (1, -0.5) {};
    \node (bbl) at (-1, -2.5) {};
    \node (bbr) at (1, -2.5) {};
    \node (leftcenter) at (-4.5, -0.75) {};
    \node (rightcenter) at (4.5, -0.75) {};

    \draw [orange] (bbl.center) .. controls (bbl.4 north west) and (leftcenter.4 north east) .. (ttl.center);
    
    \draw [cyan] (bbr.center) .. controls (bbr.4 north east) and (rightcenter.4 north west) .. (ttr.center);

    \draw [thick, cyan, -{Stealth}] (tbr.center) .. controls (tbr.8 north west) and (ttr.8 south west) .. (ttr.center) ;

    \draw [cyan, -{Stealth}] (bbr.center) .. controls (bbr.8 north west) and (btr.8 south west) .. (btr.center);
    
     \draw [orange, -{Stealth}] (bbl.center) .. controls (bbl.8 north east) and (btl.8 south east) .. (btl.center);

    \draw [orange, -{Stealth}] (tbl.center) .. controls (tbl.8 north east) and (ttl.8 south east) .. (ttl.center);

    \end{scope}

    \begin{scope}[xshift=24cm]
    \node (boxtl) at (-3, 3) {};
    \node (boxtr) at (-1, 3) {};
    \node (boxbl) at (-3, -3) {};
    \node (boxbr) at (-1, -3) {};
    
    \draw [thick, green] (boxtl.center) -- (boxtr.center);
    \draw [thick, green] (boxtl.center) -- (boxbl.center);
    \draw [thick, green] (boxbl.center) -- (boxbr.center);
    \draw [thick, green] (boxbr.center) -- (boxtr.center);
    \end{scope}
    
    \begin{scope}[xshift=28cm]
    \node (boxtl) at (-3, 3) {};
    \node (boxtr) at (-1, 3) {};
    \node (boxbl) at (-3, -3) {};
    \node (boxbr) at (-1, -3) {};
    
    \draw [thick, green] (boxtl.center) -- (boxtr.center);
    \draw [thick, green] (boxtl.center) -- (boxbl.center);
    \draw [thick, green] (boxbl.center) -- (boxbr.center);
    \draw [thick, green] (boxbr.center) -- (boxtr.center);
    \end{scope}
    
    \end{scope}
    
    \end{tikzpicture}

    \caption{\textbf{Top row:} Component possibilities for a Matching pair; \hspace{37pt}
    \textbf{Bottom row:} Components in the two new smaller diagrams that result from smoothing the Matching pair}
    \label{component possibilities}
\end{figure}

Observe that $D'$ and $D''$ are Balanced link diagrams themselves, and each has strictly fewer $A$-circles than our original $D$. So by the induction hypothesis, each matching pair in $D'$ (and in $D''$) is synchronized. Then by our previous Lemma \ref{synchronized means for same A:  same B iff same c}, we have that $D'$ and $D''$ are both synchronized.

By looking at the options of combinations in Figures \ref{B-possibilities} and \ref{component possibilities}, we see that then the diagrams $D'$ and $D''$ could only have come into existence by smoothing a synchronized matching pair in $D$. But since we chose an arbitrary matching pair in $D$ to smooth, this means that every matching pair in our larger diagram $D$ is a synchronized matching pair. 
\end{proof}

\vspace{7pt}

\begin{theorem}\label{arcs on same A-circle are part of same B-circle if and only if they are part of the same component}
 Balanced diagrams are synchronized.
\end{theorem}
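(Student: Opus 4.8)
The plan is to observe that this theorem is now an immediate consequence of the two lemmas just established. Specifically, Lemma \ref{Every matching pair is synchronized} guarantees that in any Balanced diagram every matching pair is synchronized, and Lemma \ref{synchronized means for same A:  same B iff same c} asserts that a Balanced diagram is synchronized precisely when all of its matching pairs are. Chaining these two statements gives the result with essentially no further work.

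Concretely, I would proceed as follows. First, let $D$ be an arbitrary Balanced diagram. By Lemma \ref{Every matching pair is synchronized}, every matching pair of $D$ is synchronized. Then, applying the ``if'' direction of Lemma \ref{synchronized means for same A:  same B iff same c}, we conclude that $D$ itself is synchronized; that is, two arcs lying on a common $A$-circle belong to the same $B$-circle if and only if they belong to the same component. Since $D$ was arbitrary, every Balanced diagram is synchronized.

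There is no real obstacle here — the substantive content was front-loaded into Lemma \ref{Every matching pair is synchronized} (the induction on $|A|$, using that smoothing a matching pair disconnects the diagram into two smaller Balanced diagrams and that the four $B$-circle possibilities must match up with the four component possibilities) and into Lemma \ref{synchronized means for same A:  same B iff same c} (the tree-structure argument that forces any excursion off an $A$-circle to return through the same matching pair). The only thing worth emphasizing in the write-up is that these two lemmas are logically independent statements whose conjunction is exactly the theorem, so the proof is genuinely one line once they are in hand. If anything, the care needed is purely expository: making sure the reader sees that ``synchronized Balanced diagram'' in the theorem statement is the same notion defined just before Lemma \ref{synchronized means for same A:  same B iff same c}, so that the citation is literally applicable.
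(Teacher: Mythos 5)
Your proposal is correct and is exactly the paper's own proof: the theorem is obtained by combining Lemma \ref{Every matching pair is synchronized} (every matching pair of a Balanced diagram is synchronized) with the relevant direction of Lemma \ref{synchronized means for same A:  same B iff same c} (a Balanced diagram whose matching pairs are all synchronized is itself synchronized). No further comment is needed.
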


\begin{proof}
Follows from Lemma \ref{Every matching pair is synchronized} and Lemma \ref{synchronized means for same A:  same B iff same c}
\end{proof}

\vspace{7pt}

Now at last we are ready for Theorem \ref{comp=Bcircles}:

\textbf{Theorem \ref{comp=Bcircles}}. \textit{Let $D$ be a Balanced link diagram. Then $n(D)=|B(D)|$. (The number of components is equal to the number of $B$-circles.)}

\begin{proof}[Proof of Theorem \ref{comp=Bcircles}]

Let $D$ be a Balanced diagram. Then the reduced $A$-state graph of $D$ is a tree, and therefore has a leaf. Since the set of matching pairs of $D$ is in bijection with the set of crossings in the reduced $A$-state graph, this means that $D$ contains a "leaf $A$-circle" - an $A$-circle that only touches a single matching pair. 

Therefore, just as a tree of $k+1$ vertices can be viewed as the result of attaching a single vertex to a tree of $k$ vertices, so too can a Balanced diagram $D$ where $|A(D)|=k+1$ be viewed as the result of attaching a single $A$-circle to a Balanced diagram $D'$ with $|A(D')|=k$. 

Now we proceed by induction on the number of $A$-circles. Observe that in the base case of $|A|=1$, the result holds. Let $D$ be a Balanced diagram with $|A(D)|=k+1$. Then there exists some Balanced diagram $D'$ such that $|A(D')|=k$, and an $A$-circle $A'$ in $D'$ such that $D$ can be obtained by grafting a leaf onto $D'$ along two (not necessarily distinct) arcs $x$ and $y$ of $A'$. 

By inductive hypothesis, the claim is true for $D'$ so $|B(D')|=n(D')$. 

Consider those arcs $x$ and $y$ in $D'$. If they are part of the same link component, then we create a new component by attaching a leaf circle. If instead $x$ and $y$ are part of different link components, then attaching a leaf combines these two components into one. Similarly, if $x$ and $y$ are part of the same $B$-circle, then we create a new $B$-circle by attaching a leaf, and if $x,y$ are part of different $B$-circles, then attaching a leaf combines them into one. 

On the surface, we seem to be looking at four different possibilities of changes in number of components and number of $B$-circles that could result from attaching a leaf circle.  However, we know from Theorem \ref{arcs on same A-circle are part of same B-circle if and only if they are part of the same component} that the diagram is synchronized, which means that arcs on the same $A$-circle are from the same $B$-circle if and only if they are from the same component. This leaves us with exactly two possibilities: that adding a leaf creates a new component and a new $B$-circle, or adding a leaf combines two components and combines two $B$-circles. 

By our inductive hypothesis, we know that $D'$ has the same number of $B$-circles as components. So when we add a leaf to obtain $D$ we either increase both by 1 or we decrease both by 1 - but in either case, the number of $B$-circles will still equal the number of components in our new diagram $D$.

So by induction, in any Balanced diagram the number of link components is equal to the number of $B$-circles.

\end{proof}

\newpage


\begin{thebibliography}{999}

\bibitem[AT17]{AT17}
Tetsuya Abe and Keiji Tagami, \textit{Characterization of positive links and the $s$-invariant for links}, Canadian Journal of Mathematics, 69(6), 1201-1218. doi:10.4153/CJM-2016-030-7 (2017). arXiv:1405.4061 [math.GT]

\bibitem[Ad+92]{Ad+}
Colin C. Adams, Jeffrey F. Brock, John Bugbee, Timothy D. Comar, Keith A. Faigin, Amy M. Huston, Anne M. Joseph and David Pesikoff, \textit{Almost alternating links}, Topology and its Applications 46 (1992).


\bibitem[DES09]{Di+}
 Yuanan Diao, Claus Ernst, and Andrzej Stasiak, \textit{A Partial Ordering of Knots Through Diagrammatic Unknotting},
Journal of Knot Theory and Its Ramifications Vol. 18, No. 04, pp. 505-522 (2009).

\bibitem[Fut13]{Fut13} 
David Futer, \textit{Fiber detection for state surfaces,} Algebr. Geom. Topol. 13 (2013), no. 5,
2799–2807. 	arXiv:1201.1643 [math.GT]


\bibitem[FKP13]{FKP13} 
David Futer, Efstratia Kalfagianni, and Jessica Purcell, \textit{Guts of surfaces and the colored
Jones polynomial,} Lecture Notes in Mathematics, vol. 2069, Springer, Heidelberg, 2013. arXiv:1108.3370 [math.GT]

 
\bibitem[Kau87]{Kauffman} 
Louis H. Kauffman, \textit{State Models and the Jones Polynomial}, Topology: Vol. 26 No. 3 (1987), 395-407.

\bibitem[KI]{KnotInfo}
C. Livingston and A. H. Moore, KnotInfo: Table of Knot Invariants, knotinfo.math.indiana.edu, April 7, 2022.


\bibitem[St]{Stoimenow}
A. Stoimenow, \textit{On Polynomials and Surfaces of Variously Positive Links}, 	Jour. Europ. Math. Soc. 7(4) (2005), 477--509. arXiv:math/0202226 [math.GT]

\bibitem[St05]{St05}
A. Stoimenow, \textit{On Some Restrictions to the Values of the Jones Polynomial}, Indiana University Mathematics Journal , 2005, Vol. 54, No. 2 (2005), pp. 557-574.

\bibitem[St22]{St22}
A. Stoimenow, \textit{Applications of Jones semiadequacy tests}, 2022.\\
{http://stoimenov.net/stoimeno/homepage/papers.html}

\bibitem[StW]{StW}
A. Stoimenow, Lists of non-alternating positive of crossing number $\leq 15$ ,\\ {http://stoimenov.net/stoimeno/homepage/ptab/index.html}

\bibitem[Th87]{Thistlethwaite}
Morwen B. Thistlethwaite,  \textit{A Spanning Tree Expansion of the Jones Polynomial}, Topology: Vol. 26 No. 3 (1987).


\end{thebibliography}
\end{document}